\numberwithin{equation}{section}
\newtheorem{thm}{Theorem}[section]
\newtheorem*{theoremA}{Theorem A}
\newtheorem*{theoremB}{Theorem B}
\newtheorem*{theoremC}{Theorem C}
\newtheorem*{theoremD}{Theorem D}
\newtheorem{fact}[thm]{Fact}
\newtheorem{corol}[thm]{Corollary}
\newtheorem{lemma}[thm]{Lemma}
\newtheorem{prop}[thm]{Proposition}
\newtheorem{quest}[thm]{Question}
\newtheorem{question}[thm]{Question}
\newtheorem{defi}[thm]{Definition}
\newtheorem{notation}[thm]{Notation}
\newtheorem{example}[thm]{Example}
\theoremstyle{definition}
\theoremstyle{remark}
\newtheorem{remark}[thm]{Remark}
\newcommand{\ben}{\begin{enumerate}}
	\newcommand{\een}{\end{enumerate}}
\newcommand{\bit}{\begin{itemize}}
	\newcommand{\eit}{\end{itemize}}
\def\R {{\mathbb R}}
\def\Q {{\mathbb Q}}
\def\N{{\mathbb N}}
\def\T{{\mathbb T}}
\def\Z {{\mathbb Z}}
\def\Aut{{\mathrm Aut}\,}
\def\H{{\mathcal H}}
\def\eps{{\varepsilon}}
\def\QED{\nobreak\quad\ifmmode\roman{Q.E.D.}\else{\rm Q.E.D.}\fi}
\def\HLM{hereditarily  locally minimal}
\def\H(L)M{hereditarily  (locally) minimal}
\def \HM {hereditarily   minimal}
\def \CHM {compactly hereditarily   minimal}
\def \CHLM {compactly hereditarily locally minimal}
\def \HM {hereditarily   minimal}
\def\hlm{\operatorname{HLM}}
\def\hm{\operatorname{HM}}
\def\chm{\operatorname{CHM}}
\def\chlm{\operatorname{CHLM}}
\begin{document}

\title{Hereditarily  minimal topological  groups}
\author[Xi]{W. Xi}
\address[W. Xi]{\hfill\break
	School of Mathematical Sciences
	\hfill\break
	Nanjing Normal University
	\hfill\break
	Wenyuan Road No. 1, 210046 Nanjing
	\hfill\break
	China}
\email{xiwenfei0418@outlook.com}

\author[Dikranjan]{D. Dikranjan}
\address[D. Dikranjan]{\hfill\break
Dipartimento di Matematica e Informatica
\hfill\break
Universit\`{a} di Udine
\hfill\break
Via delle Scienze  206, 33100 Udine
\hfill\break
Italy}
\email{dikranja@dimi.uniud.it}
\author[Shlossberg]{M. Shlossberg}
\address[M. Shlossberg]{\hfill\break
Dipartimento di Matematica e Informatica
\hfill\break
Universit\`{a} di Udine
\hfill\break
Via delle Scienze  206, 33100 Udine
\hfill\break
Italy}
\email{menachem.shlossberg@uniud.it}
\author[Toller]{D. Toller}
\address[D. Toller]{\hfill\break
Dipartimento di Matematica e Informatica
\hfill\break
Universit\`{a} di Udine
\hfill\break
Via delle Scienze  206, 33100 Udine
\hfill\break
Italy}
\email{daniele.toller@uniud.it}

\keywords{{locally minimal group, Lie group, $p$-adic number, $p$-adic integer, hereditarily non-topologizable group, categorically compact group}}

\subjclass[2010]{20F16, 20F19, 20F50, 22A05, 22B05, 22D05}

\begin{abstract}
We study locally compact groups having all subgroups minimal. We call such groups {\it hereditarily} {\it minimal}. In 1972 Prodanov proved that the infinite hereditarily minimal compact abelian groups are precisely the groups $\Z_p$ of $p$-adic integers. We  extend  Prodanov's theorem to the non-abelian case at several levels. For infinite hypercentral (in particular, nilpotent) locally compact groups we show that the hereditarily minimal ones remain the same as in the abelian case. On the other hand, we classify completely the locally compact solvable \HM \ groups, showing that in particular they are always compact and metabelian.

The proofs involve the (hereditarily) locally minimal groups, introduced similarly. In particular, we prove a conjecture by He, Xiao and the first two authors, showing that the group $\Q_p\rtimes \Q_p^*$ is \HLM, where $\Q_p^*$ is the multiplicative group of non-zero $p$-adic numbers acting on the first component by multiplication. Furthermore, it turns out that the locally compact solvable \HM \ groups are closely related to this group.
\end{abstract}

\maketitle

\begin{center}
\today
\end{center}


\section{Introduction}

A Hausdorff topological group $(G, \tau)$ is called {\it minimal} if there exists no Hausdorff group topology on $G$ which is strictly coarser than $\tau$ (see \cite{Doitch,S71}).
This class of groups, containing all compact ones, was largely studied in the last five decades, (see the papers \cite{B,DS,DU,EDS,Meg,P}, the surveys \cite{DMe,S74} and the book \cite{DPS}). Since it is not stable under taking quotients, the following stronger notion was introduced in \cite{DP}:  a minimal group $G$ is {\it totally minimal}, if the quotient group $G/N$ is minimal for every closed normal subgroup $N$ of $G$.
This is precisely a group $G$ satisfying the open mapping theorem, i.e., every continuous surjective homomorphism with domain $G$ and codomain any Hausdorff topological group is open. Clearly, every compact group is totally  minimal.

Examples of locally compact  non-compact minimal groups can be found in \cite{RS}, they are all non-abelian. Indeed,  Stephenson \cite{S71}  noticed much earlier that local compactness and minimality jointly imply compactness, for abelian groups:

\begin{fact}\cite[Theorem 1]{S71} \label{Steph:Thm} A minimal locally compact abelian group is compact.
\end{fact}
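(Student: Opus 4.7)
The plan is to use the Bohr topology. Let $(G,\tau)$ be a minimal locally compact abelian group, and let $\tau^+$ denote the initial topology on $G$ induced by the family of all continuous characters $\chi \colon (G,\tau) \to \T$; equivalently, $\tau^+$ is the topology inherited by $G$ from the canonical map $G \to bG$ into the Bohr compactification of $G$.

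First, I would verify that $\tau^+$ is a Hausdorff group topology with $\tau^+ \subseteq \tau$. The inclusion is immediate, since each character is $\tau$-continuous by the very definition of $\tau^+$. Hausdorffness is the key input here: every locally compact abelian group is maximally almost periodic, meaning that its continuous characters separate points. This is a standard consequence of Pontryagin duality (or, equivalently, of the structure theorem for LCA groups). Since $\tau^+$ is a Hausdorff group topology coarser than $\tau$, the minimality hypothesis on $(G,\tau)$ forces $\tau^+ = \tau$.

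Next, I would observe that $(G,\tau^+)$ embeds as a topological subgroup of the compact group $bG$, and is therefore totally bounded. Finally, I would invoke the elementary fact that a locally compact Hausdorff totally bounded topological group must be compact: its Raikov completion is both compact (by total boundedness) and equal to the group itself (by local compactness, which implies Raikov-completeness). Combining, $(G,\tau) = (G,\tau^+)$ is compact.

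The main obstacle is really packaged into the statement that every LCA group is maximally almost periodic; this rests on Pontryagin duality, which is nontrivial but entirely standard. The remaining ingredients — that the Bohr topology is totally bounded and that local compactness combined with total boundedness yields compactness — are routine, so the Bohr-topology strategy furnishes a clean route to the conclusion. A more elementary alternative would be to appeal directly to the structure theorem $G \cong \R^n \times G_0$ and verify by hand that each factor $\R$, $\Z$ admits a strictly coarser Hausdorff group topology, but passing from the factors back to $G$ itself makes this route more cumbersome than the uniform Bohr argument.
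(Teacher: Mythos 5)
Your argument is correct and complete. Note that the paper itself does not prove this statement: it is quoted as Fact \ref{Steph:Thm} with a citation to Stephenson, so there is no in-paper proof to compare against. Your Bohr-topology route is essentially the classical argument: the precompact topology $\tau^+$ induced by the continuous characters is a group topology coarser than $\tau$, it is Hausdorff because characters of an LCA group separate points (Pontryagin duality), minimality forces $\tau=\tau^+$, and then $(G,\tau)$ is simultaneously precompact and locally compact, hence complete and totally bounded, hence compact. Each of these steps is standard and correctly justified; in particular you rightly avoid having to show directly that $\tau^+$ is strictly coarser on a noncompact group, since the conclusion is extracted after the identification $\tau=\tau^+$. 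Your closing remark is also apt: the structure-theorem alternative ($G\cong\R^n\times G_0$ with $G_0$ containing a compact open subgroup) works but requires extra care in passing coarser topologies on factors or quotients back to $G$, so the uniform character-based argument is the cleaner one.
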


In particular, $\R$ is not minimal. This failure of minimality to embrace also local compactness was repaired by Morris and Pestov \cite{MP}.  They called {\it locally minimal} a topological group $(G,\tau)$, having a neighborhood $V$ of the identity of $G$, such that for every coarser Hausdorff group topology $\sigma\subseteq \tau$ with $V\in \sigma$, one has $\sigma=\tau$.  Clearly, every minimal group is locally minimal. Moreover, every locally compact group is locally minimal.

Neither minimality nor local minimality are inherited by all subgroups (although they are inherited by closed central subgroups).  This justifies the following definition, crucial for this paper (these properties are abbreviated sometimes to  $\hm$ and $\hlm$ in the sequel):

\begin{defi}
A topological group $G$ is said to be {\em hereditarily} {\em (locally)}  {\em minimal}, if every subgroup of $G$ is  (locally) minimal.
\end{defi}

The following theorem of Prodanov provided an interesting characterization of the group of $p$-adic integers $\Z_p$ in terms of hereditary minimality.

\begin{fact}\label{TeoP}{\em \cite{P}}\label{fac:HMZ}
An infinite compact abelian group $K$ is isomorphic to $\Z_p$ for some prime $p$ if and only if  $K$ is \HM.
\end{fact}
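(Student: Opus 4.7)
The plan is to treat the two implications separately, with the ``only if'' direction broken into four reduction steps culminating in Pontryagin duality.

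\emph{Sufficiency.} For any subgroup $H\leq\Z_p$, the closure $\overline H$ is a closed subgroup of $\Z_p$, hence $\{0\}$ or $p^k\Z_p$; in the nontrivial case, a nonzero $h\in H$ of $p$-adic valuation $j\geq k$ gives $p^{m-j}h\in H\cap p^m\Z_p\setminus\{0\}$ for every $m\geq j$, so $H$ is essential in $\overline H$. The Banaschewski--Prodanov--Stoyanov criterion (precompact $+$ essential in completion $\Rightarrow$ minimal) then yields that $H$ is minimal, so $\Z_p$ is \HM.

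\emph{Necessity: locating a copy of $\Z_p$.} Let $K$ be infinite compact abelian \HM. A torsion $K$ would have bounded exponent by Baire, so $(\Z/p)^{\omega}\subseteq K$ for some prime $p$; but the abstract direct sum $(\Z/p)^{(\omega)}$ is disjoint from the closed diagonal $\{(c,c,\ldots):c\in\Z/p\}$, hence non-essential and non-minimal, contradicting \HM. Pick $x\in K$ of infinite order; the monothetic closed subgroup $L_x:=\overline{\langle x\rangle}$ is also \HM, and by Halmos--Samelson $\widehat{L_x}$ embeds into $\T_d$ via $\chi\mapsto\chi(x)$. A non-torsion character $\chi$, with $\chi(x)=\alpha$ irrational in $\T$, would produce a nontrivial closed subgroup of $L_x$ (namely $\ker\chi$ when $L_x\neq\T$, or any $\Z/n\subseteq L_x=\T$ when $L_x=\T$) avoiding $\langle x\rangle$, violating essentiality. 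Hence $\widehat{L_x}\leq\Q/\Z$, whence $L_x=\prod_q L_q$ with $L_q\in\{0,\Z/q^k,\Z_q\}$; a CRT-style diagonal-$\Z$ argument forces all but one factor to vanish, giving $L_x\cong\Z_p$ for a single prime $p$.

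\emph{Necessity: torsion-freeness and duality finish.} Any prime-order element $y\in K$ is disjoint from the torsion-free $\Z_p\subseteq K$, so $\Z/q\oplus\Z_p\subseteq K$. For $q\neq p$, the diagonal $\Z\hookrightarrow\Z/q\oplus\Z_p$, $n\mapsto(n\bmod q,n)$, is dense by CRT but meets $\Z/q\times\{0\}$ only in $0$. For $q=p$, take an irrational unit $\alpha\in\Z_p^{*}\setminus\Q$ and $H:=\Z\cdot(1,1)+\Z\cdot(0,\alpha)\subseteq\Z/p\oplus\Z_p$: for each $c\in\{0,\ldots,p-1\}$ the set $\{c\}\times(c+\Z\alpha)\subseteq H$ closes up to $\{c\}\times\Z_p$ (using that $\alpha$ is a unit), so $H$ is dense; but $H\cap(\Z/p\times\{0\})=\{0\}$, since $n+m\alpha=0$ with $n,m\in\Z$ forces $n=m=0$ by irrationality of $\alpha$. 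So $K$ is torsion-free, i.e., $\widehat{K}\cong\Q^{(I)}\oplus\bigoplus_q\Z(q^\infty)^{(J_q)}$ is divisible. Each summand of $\widehat{K}$ dualizes to a closed subgroup of $K$: $\Z(q^\infty)$ to $\Z_q$, and $\Q$ to the full solenoid $\widehat{\Q_d}$ (which itself contains $\Z_p\oplus\Z_{q'}$ via the surjections $\Q\twoheadrightarrow\Z(p^\infty),\Z(q'^\infty)$). The diagonal-$\Z$ obstruction on $\Z_p\oplus\Z_q$ then forces $I=0$ and $J_q=0$ for $q\neq p$, while $J_p\geq 2$ would embed $\Z_p^2\subseteq K$, where $\Z^2$ avoids the closed $\Z_p$-submodule $\Z_p\cdot(1,\alpha)$ for any irrational $\alpha\in\Z_p$, hence $\Z^2$ is non-minimal. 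So $\widehat{K}=\Z(p^\infty)$ and $K\cong\Z_p$.

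\emph{Main obstacle.} The subtlest step will be ruling out $p$-torsion: the naive diagonal $\Z\subseteq\Z/p\oplus\Z_p$ turns out to be essential in its closure (which is itself only an index-$p$ subgroup of $\Z/p\oplus\Z_p$) and hence minimal, so one must construct the less obvious two-generator dense subgroup $H$ above, using an irrational unit $\alpha\in\Z_p^{*}$ to decouple the $\Z/p$ and $\Z_p$ coordinates and thereby break essentiality in the $\Z/p\times\{0\}$ direction.
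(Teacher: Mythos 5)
This statement is quoted in the paper as a Fact from Prodanov's paper \cite{P}; the paper itself gives no proof, so there is nothing internal to compare against line by line. Judged on its own, your argument is correct and is essentially the standard modern proof (essentiality of dense subgroups of compact groups plus Pontryagin duality), as opposed to Prodanov's original treatment via precompact group topologies. The sufficiency step, the Baire/bounded-exponent elimination of the torsion case via $(\Z/p)^{(\omega)}$ versus the diagonal, the monothetic reduction through $\widehat{L_x}\leq\T_d$, the two torsion-killing constructions (the CRT diagonal for $q\neq p$, and the two-generator subgroup $\Z(1,1)+\Z(0,\alpha)$ with $\alpha\in\Z_p^*\setminus\Q$ for $q=p$ -- your remark that the naive diagonal is essential in its closure there is accurate, so the extra generator really is needed), and the final divisible-dual analysis ruling out $\Q$-summands, two distinct primes, and $J_p\geq 2$ all check out; note that direct summands of $\widehat K$ do give closed direct factors of $K$, so that step is legitimate. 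The one place where your sketch is thinner than it should be is the clause ``a CRT-style diagonal-$\Z$ argument forces all but one factor to vanish'' in the monothetic step: if no factor of $L_x=\prod_q L_q$ is infinite, then infinitely many finite factors are non-trivial, and the pairing of a $\Z_p$-factor with another factor is unavailable; one must instead observe that $\langle x\rangle\cong\Z$ is dense in $L_x$ but meets each coordinate subgroup $L_{q_0}\times\prod_{q\neq q_0}\{0\}$ trivially (a non-zero multiple landing there would be divisible by infinitely many primes), which is the same CRT idea and closes the case. With that sentence added, and with the parenthetical about the solenoid replaced by the cleaner observation that $\widehat{\Q_d}$ contains the annihilator of $\Z$, namely $\widehat{\Q/\Z}\cong\prod_q\Z_q\supseteq\Z_q\times\Z_{q'}$, your proof is complete.
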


Dikranjan and Stoyanov classified all \HM \ abelian groups.

\begin{fact}\label{DS-theorem}{\em \cite{DS}}
Let $G$ be a topological abelian group. Then the following conditions are equivalent:
\ben
\item each subgroup of $G$ is totally minimal;
\item  $G$ is \HM;
\item $G$ is topologically isomorphic to one of the following groups:\ben [(a)]
\item  a subgroup of $\Z_p$ for some prime $p$,
\item  a direct sum $\bigoplus F_p$, where for each prime $p$, the group $F_p$ is a finite abelian $p$-group,
\item $X\times F_p$,  where $X$ is a rank-one subgroup of $\Z_p$, and  $F_p$ is a finite abelian $p$-group.
\een \een
\end{fact}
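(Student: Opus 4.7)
The implication $(1)\Rightarrow(2)$ is immediate, since total minimality implies minimality. For $(3)\Rightarrow(1)$ I would argue case by case: in (a), the subgroups of $\Z_p$ are either trivial, finite cyclic of $p$-power order, or of the form $p^n\Z_p\cong\Z_p$, hence compact and therefore totally minimal; in (b) and (c) the listed groups are (topological) direct sums/products whose factors are either compact abelian pro-$p$ groups or finite abelian $p$-groups, and standard stability results for total minimality under direct products of totally minimal abelian groups reduce the verification of each subgroup to the factorwise case already handled.

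The heart of the theorem is $(2)\Rightarrow(3)$.  The first step is to invoke the Prodanov--Stoyanov precompactness theorem: every minimal abelian topological group is precompact, so the completion $K:=\widehat{G}$ is a compact abelian group and $G\hookrightarrow K$ is dense.  I would next analyse $K$ via Pontryagin duality and rule out a non-trivial connected component: otherwise $K$ would have as a quotient a non-trivial torus or solenoid, and hereditary minimality of $G$ would place inside it a dense subgroup of such a connected group (for instance a copy of $\Q/\Z\subseteq\T$) which is well known not to be minimal.  Hence $K$ is profinite, and the structure theorem for compact abelian profinite groups gives $K=\prod_{p} K_p$ with $K_p$ its $p$-primary part.

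Applying Prodanov's theorem (Fact~\ref{TeoP}) to each infinite $K_p$ yields $K_p\cong\Z_p$.  A diagonal argument then forbids the coexistence of two infinite components, even when the primes agree: inside $\Z_p\times\Z_q$ (respectively $\Z_p\times\Z_p$) one constructs a closed subgroup with non-trivial projections on both factors which carries a strictly coarser Hausdorff group topology in its subspace topology, contradicting hereditary minimality.  The same technique constrains the interaction of the unique infinite $\Z_p$-summand (if any) with the torsion part: the torsion summand adjoining $\Z_p$ must be a finite $p$-group, yielding case (c) (or case (a) when this finite part is trivial), while in the absence of an infinite summand one recovers case (b).  Finally, to pass from $K$ back to $G$ I would use that every infinite subgroup of $\Z_p$ has rank one and is dense in some $p^n\Z_p$, which then forces $G\subseteq K$ to take one of the three stated forms.

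\textbf{Main obstacle.}  The genuinely delicate step is the diagonal non-minimality argument excluding two infinite $p$-primary components and coupling the $\Z_p$-summand to the torsion.  Producing an explicit closed subgroup of $\Z_p\times\Z_p$ (or $\Z_p\times\Z_q$) that fails minimality in its subspace topology, and then arranging that the witnessing non-minimal subgroup survives the restriction to the dense subgroup $G\subseteq K$, will require Merson-type criteria for minimality of dense subgroups of compact abelian groups together with the specific arithmetic of the $p$-adic integers.
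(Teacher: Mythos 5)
The paper does not prove this statement at all: it is quoted as a Fact from \cite{DS}, so there is no internal argument to compare yours with, and your outline has to stand on its own --- and as it stands it has both factual errors and an unproved core. In the ``easy'' direction $(3)\Rightarrow(1)$ you describe the subgroups of $\Z_p$ as trivial, finite cyclic of $p$-power order, or of the form $p^n\Z_p$, ``hence compact''; but $\Z_p$ is torsionfree, and its typical subgroups (e.g.\ $\Z$, or any non-closed dense subgroup) are neither closed nor compact, so their total minimality is exactly the nontrivial content here and must be obtained from the Total Minimality Criterion (Fact \ref{fac:TMC}: total density in the compact closure), not from compactness. Similarly, ``standard stability of total minimality under direct products'' is not available for abelian groups; the verification for (b) and (c) again has to go through (total) essentiality of each subgroup in its closure. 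Later you also assert that every infinite subgroup of $\Z_p$ has rank one, which is false ($\Z_p$ contains free abelian subgroups of uncountable rank; that is why item (a) allows arbitrary subgroups of $\Z_p$ while only (c) restricts to rank one).

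The decisive implication $(2)\Rightarrow(3)$ is not established. After passing to the completion $K=\widehat G$ you apply Fact \ref{TeoP} to the infinite primary components $K_p$ of the profinite group $K$; but Fact \ref{TeoP} needs $K_p$ to be hereditarily minimal, and hereditary minimality of the dense subgroup $G$ is \emph{not} inherited by $K$ or by its closed subgroups --- for instance the completion of the hereditarily minimal group $\bigoplus_p F_p$ of item (b) is $\prod_p F_p$, which is very far from hereditarily minimal --- nor need $G\cap K_p$ be dense in $K_p$, so the hypothesis cannot be transferred. For the same reason the connectedness exclusion via ``a copy of $\Q/\Z\subseteq\T$'' is unjustified: subgroups of $K$ need not meet $G$, so any non-minimal witness has to be produced inside $G$ itself. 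Finally, you explicitly defer the key ``diagonal'' exclusion of two infinite primary components, and the coupling of the $\Z_p$-part with the torsion, to a ``main obstacle,'' so the heart of the theorem is announced rather than proved. The standard way to close these gaps is elementwise: minimality of each cyclic subgroup $\langle x\rangle\leq G$, combined with Prodanov's description of the minimal group topologies on $\Z$ (precisely the $p$-adic ones), forces every non-torsion element to generate a dense copy of $\Z$ in some $\Z_p$, and essentiality of subgroups in their closures (e.g.\ the diagonal copy of $\Z$ in $\Z_p\times\Z_q$ misses $\Z_p\times\{0\}$, hence is not minimal) then excludes mixed primes and bounds the torsion part, along the lines of \cite{DS}.
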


Note that only the groups from item (a) can be  infinite locally compact. Indeed, using Fact \ref{Steph:Thm} one can extend Fact \ref{fac:HMZ} to locally compact abelian groups
as follows:

\begin{corol}\label{cor:spr}
An infinite \HM\ locally compact abelian group is isomorphic to $\Z_p$.
\end{corol}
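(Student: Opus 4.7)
The plan is to combine the two cited facts in a single short step. Let $G$ be an infinite \HM\ locally compact abelian group. Since $G$ is, in particular, a subgroup of itself, the hypothesis that every subgroup of $G$ is minimal yields that $G$ itself is a minimal topological group.

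Now I would invoke Fact \ref{Steph:Thm}: a minimal locally compact abelian group is compact. Hence $G$ is compact. At this point $G$ is an infinite compact abelian \HM\ group, so Fact \ref{fac:HMZ} applies and gives $G \cong \Z_p$ for some prime $p$.

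Alternatively, one could bypass Fact \ref{fac:HMZ} and argue from the full classification in Fact \ref{DS-theorem}: cases (b) and (c) produce groups that, when infinite, fail to be locally compact (an infinite direct sum $\bigoplus F_p$ of nontrivial finite $p$-groups is not locally compact in any Hausdorff group topology compatible with the structure, and the factor $X$ in (c) would have to be closed in $\Z_p$, hence equal to $\Z_p$ or finite), and in case (a) an infinite closed subgroup of $\Z_p$ is $\Z_p$ itself. Either route is immediate; there is no genuine obstacle, since the serious work is already packaged in Facts \ref{Steph:Thm} and \ref{fac:HMZ}. The only point worth stating explicitly is the trivial observation that \HM\ implies minimality of $G$ itself, which is what unlocks Fact \ref{Steph:Thm}.
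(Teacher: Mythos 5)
Your main argument is exactly the paper's: hereditary minimality gives minimality of $G$ itself, Fact \ref{Steph:Thm} then yields compactness, and Fact \ref{fac:HMZ} (Prodanov) gives $G \cong \Z_p$. The proposal is correct and takes essentially the same route as the paper, so nothing further is needed.
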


The main aim of this paper is to extend this result to non-abelian groups  at various levels of non-commutativity. In particular, we obtain an extension to hypercentral (e.g., nilpotent) groups (Corollary \ref{cor:nprodanov}), yet some non-abelian groups may appear beyond the class of hypercentral groups (see Theorem C or Theorem D for a complete description in the case of solvable groups).
 Let us mention here that without any restraint on commutativity one can find examples of very exotic \HM \ groups even in the discrete case (see \S 3.1, entirely dedicated to discrete \HM \ groups and their connection to categorically compact groups).

 As far as \HLM \ groups are concerned, the following question  was raised in \cite[Problem 7.49]{DMe} in these terms:
{\em if  a connected locally compact group $G $ is \HLM, is $G$ necessarily a Lie group}?  Inspired by this question and the above results, hereditarily locally minimal groups were characterized in \cite{DHXX} among the locally compact groups which are either abelian or connected as follows:

\begin{fact}{\rm \cite[Corollary 1.11]{DHXX}}\label{fac:ext}
For a locally compact  group $K$ that is either abelian or connected, the following conditions are equivalent:
\ben[(a)]
\item $K$ is a \HLM \ group;
\item $K$ is either a Lie group or has an open subgroup isomorphic to $\Z_p$ for some prime $p.$\een
\end{fact}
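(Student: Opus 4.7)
The plan is to verify both implications separately.

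For $(b)\Rightarrow (a)$: any Lie group has the NSS (no-small-subgroups) property, and this property passes to every subgroup since an identity neighbourhood witnessing NSS in $K$ restricts to one in $H$. As NSS implies local minimality, $K$ is HLM in this case. If instead $K$ has an open subgroup $U\cong\Z_p$, then for any subgroup $H\le K$ the intersection $H\cap U$ is an open subgroup of $H$ which, viewed as a subgroup of $\Z_p$, is minimal by Prodanov's theorem (Fact~\ref{fac:HMZ}, using that $\Z_p$ is \HM). Any topological group with an open minimal subgroup is locally minimal (a coarser Hausdorff group topology $\sigma$ admitting that open subgroup as a $\sigma$-neighbourhood agrees with the original on it by minimality, and hence globally, since an open subgroup determines the topology on the whole group); so $H$ is LM.

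For $(a)\Rightarrow (b)$ I would split into two cases according to the hypothesis. \emph{Abelian case:} apply the Pontryagin--van Kampen structure theorem, giving an open subgroup $A=\R^n\times C$ with $C$ compact abelian. If $n=0$, then $C$ is open in $K$ and HLM; if $C$ is finite, $K$ is discrete (a $0$-dimensional Lie group), while if $C$ is infinite, an HLM-adaptation of Fact~\ref{fac:HMZ} (together with Corollary~\ref{cor:spr}) forces $C\cong\Z_p$, giving (b). If $n\ge 1$, I would argue that $C$ must be an abelian Lie group ($C\cong\T^k\times F$ with $F$ finite): otherwise $C$ contains a closed copy of $\Z_p$, and a suitably chosen dense subgroup of $\R\times\Z_p$ built from an irrational slope into the $\R$-factor (or from $\Z$ diagonally embedded) fails to be locally minimal, contradicting HLM. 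Then $A$, and hence $K$, is a Lie group.

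\emph{Connected case:} if $K$ is connected and has an open subgroup $U\cong\Z_p$, then $U=K$ (connected groups have no proper open subgroups), forcing $K$ totally disconnected and hence trivial, so (b) reduces to ``$K$ is a Lie group.'' By the Gleason--Yamabe theorem, a connected locally compact group is a Lie group iff it is NSS, so it suffices to show that a connected HLM LC group is NSS. If not, every identity neighbourhood of $K$ contains a nontrivial subgroup, and Yamabe's structure theorem furnishes a nontrivial compact normal subgroup $N\trianglelefteq K$ with $K/N$ a Lie group. Analyzing $N$ via Peter--Weyl (splitting off a torus and a totally disconnected piece, the latter containing a copy of $\Z_p$ for some prime $p$), one constructs a subgroup of $K$ whose inherited topology is not locally minimal, contradicting HLM.

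The main obstacle, common to the $n\ge 1$ abelian situation and to the connected non-Lie situation, is the explicit production of a non-LM subgroup from the ``small subgroup'' structure. Yamabe's theorem controls the compact kernel, but converting ``non-NSS'' into a concrete witness of non-local-minimality is delicate and likely requires combining Prodanov's theorem on $\Z_p$ with density/extension arguments (e.g., diagonal embeddings exploiting the interaction between a connected factor and a totally disconnected profinite factor). This is the technical heart of the argument in \cite{DHXX}.
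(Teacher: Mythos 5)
Note first that the paper does not prove this statement at all: it is imported verbatim as a Fact from \cite[Corollary 1.11]{DHXX}, so your argument has to stand on its own, and as written it has two genuine gaps. The first is in the Lie half of (b)$\Rightarrow$(a): the claim ``NSS implies local minimality'' is false. For example, the direct sum $\bigoplus_{n\in\N}\R$ with the box topology has no small subgroups, yet it is not locally minimal: any neighbourhood $V$ of $0$ contains a box $B=\prod_n(-\e_n,\e_n)$, and the topology generated by $\{rB: r>0\}$ (the norm topology of the gauge of $B$) is a strictly coarser Hausdorff group topology in which $V$ is still a neighbourhood of $0$. What the Lie case really needs is the theorem that \emph{every subgroup of a Lie group is locally minimal}; this is a nontrivial result of \cite{ACDD,DHXX} whose proof uses the local Lie structure (exponential map, GTG-type neighbourhoods), not merely the absence of small subgroups. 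By contrast, your treatment of the case of an open subgroup $U\cong\Z_p$ is correct: $H\cap U$ is an open subgroup of $H$, minimal by Prodanov's theorem (Fact~\ref{TeoP}), and an open (locally) minimal subgroup forces local minimality of the whole group, as in Lemma~\ref{lem:HLM}(1).

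The second gap is that (a)$\Rightarrow$(b) is not actually established. In the abelian case with $n=0$ you appeal to an ``HLM-adaptation of Fact~\ref{TeoP}'' to force the infinite compact open subgroup $C$ to be $\Z_p$; no such adaptation exists as stated, since compact abelian HLM groups need not be $\Z_p$ (e.g.\ $\T$, or $\Z_p\times F$ with $F$ finite), and Corollary~\ref{cor:spr} is about hereditary \emph{minimality}, not hereditary local minimality. What must really be shown is that an infinite compact abelian HLM group is either Lie or contains an open copy of $\Z_p$, which requires excluding groups such as $\Z_p\times\Z_p$, $\Z_p\times\Z_q$ and infinite products of finite groups by exhibiting in each a (dense) subgroup that fails to be locally essential in the sense of Fact~\ref{Crit}(2); the same missing construction reappears in your $n\ge 1$ abelian case ($\R\times\Z_p$) and in the connected non-Lie case after Yamabe. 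You acknowledge this yourself at the end, but that construction is precisely the content of \cite[Corollary 1.11]{DHXX}, so the hard direction remains unproved; combined with the false NSS lemma above, the proposal does not yet constitute a proof of either implication in full.
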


This provides, among others, a characterization of the connected Lie groups as connected locally compact  \HLM \ groups.

\subsection{Main Results}
It was mentioned in \cite{DHXX}, that item (a) in Fact \ref{fac:ext} might not be equivalent to item  (b) in the non-abelian case, and it was conjectured (see \cite[Conjecture 5.1]{DHXX}) that a possible counter-example could be the group $(\Q_p,+)\rtimes \Q_p^*$, where $\Q_p^* = (\Q_p\setminus\{0\},\cdot)$ (here  the natural action of $\Q_p^*$ on $\Q_p$ by multiplication is intended). We prove that this conjecture holds true.

\begin{theoremA} Let $p$ be a prime. Then $(\Q_p,+)\rtimes \Q_p^*$  is \HLM. \end{theoremA}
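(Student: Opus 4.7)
The plan is to show local minimality of an arbitrary subgroup $H \le G := \Q_p \rtimes \Q_p^*$ by analyzing the short exact sequence
\[
1 \longrightarrow N \longrightarrow H \longrightarrow A \longrightarrow 1, \qquad N := H \cap \Q_p, \quad A := \pi(H),
\]
where $\pi \colon G \to \Q_p^*$ is the canonical projection. Two degenerate cases reduce directly to Fact \ref{fac:ext}. If $A = \{1\}$, then $H \le \Q_p$, and $\Q_p$ is HLM (it has the compact open subgroup $\Z_p$); so $H$ is LM. If $N = \{0\}$, then $\pi|_H$ embeds $H$ topologically into $\Q_p^* \cong \Z \times \Z_p^*$; since the pro-$p$ part of $\Z_p^*$ is isomorphic to $\Z_p$ and open in $\Q_p^*$, the group $\Q_p^*$ is HLM by Fact \ref{fac:ext}, and hence $H$ is LM.

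The main case is $N \ne \{0\}$ and $A \ne \{1\}$. Conjugation inside $G$ gives $aN = N$ for every $a \in A$, making $N$ into an $A$-invariant additive subgroup of $\Q_p$. I would split further according to whether $A$ is bounded. In the bounded case $A \subseteq \Z_p^*$, after rescaling $N$ by a power of $p$ (which is a $G$-automorphism) one may assume $H$ sits inside the compact open subgroup $K := \Z_p \rtimes \Z_p^*$; then the closure $\bar H$ in $K$ is a compact group. In the unbounded case, pick $a_0 \in A$ with $|a_0|_p < 1$; then $\{a_0^k x : k \in \Z\} \subseteq N$ for any $x \in N \setminus \{0\}$ forces $\bar N = \Q_p$, so $\bar H = \Q_p \rtimes \bar A$ is a closed locally compact subgroup of $G$ containing the compact open subgroup $\Z_p \rtimes U$ for any sufficiently small open subgroup $U \le \bar A \cap \Z_p^*$. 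In both subcases one checks, using Fact \ref{fac:ext} or direct inspection of the pro-$p$ structure, that $\bar H$ itself is LM.

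The final step is to transfer local minimality from $\bar H$ down to $H$. One selects a neighborhood $V$ of the identity in $\bar H$ witnessing its local minimality, and argues that $V \cap H$ witnesses local minimality of $H$: any coarser Hausdorff group topology on $H$ containing $V \cap H$ must, by conjugating $V \cap H$ with elements of $H$ and using the non-trivial action of $A$ on $N$ (or compactness of $K$ in the bounded case), generate the full induced topology on $H$. The principal obstacle is precisely this essentiality/forcing step, since a subgroup of $G$ need not be closed and can sit inside $\bar H$ in intricate ways. Making the forcing argument precise — especially distinguishing between topologically cyclic $A$ and $A$ of infinite rank modulo $p^{\Z}$, and controlling the possible shapes of the $A$-module $N$ inside $\Q_p$ — is where the real technical work resides; it is plausible that the argument ultimately exhibits an explicit small compact subgroup of $H$ (or of $\bar H$ meeting $H$ densely) to produce the witnessing neighborhood.
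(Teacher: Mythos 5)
Your reduction is sound in outline, and your case split by $N=H\cap\Q_p$ and $A=\pi(H)$ in fact matches the paper's split into abelian and non-abelian subgroups (note that $N\neq\{0\}$ and $A\neq\{1\}$ force $H$ to be non-abelian, while the two degenerate cases are exactly the abelian ones). But the main case is not proved: you yourself flag that the ``essentiality/forcing step'' transferring local minimality from $\overline H$ to the (possibly non-closed) subgroup $H$ is the real obstacle and only call a resolution ``plausible''. That step is precisely the content of the theorem, and it has a short, clean solution that your bounded/unbounded dichotomy does not reach. The paper shows that every non-abelian subgroup $H$ of $L=(\Q_p,+)\rtimes\Q_p^*$ is \emph{essential} in $\overline H$ (Proposition \ref{nonab:subgr:essent}): non-abelian subgroups of $L$ have trivial center (Lemma \ref{lem:trabel}), so $\overline H\,'$ is essential in $\overline H$ by Lemma \ref{lem:tre}; hence any non-trivial closed normal $N\trianglelefteq\overline H$ meets $L'\cong\Q_p$ non-trivially, and since $H\cap L'\neq\{e\}$ by non-commutativity (Lemma \ref{lem:abel}) and every non-trivial subgroup of $\Q_p$ is essential (Lemma \ref{lem:cyc}), one gets $N\cap H\neq\{e\}$. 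Then the Local Minimality Criterion (Fact \ref{Crit}(2)) applied to the dense subgroup $H$ of the locally compact (hence locally minimal) group $\overline H$ finishes. Without an argument of this kind your proposal does not establish local minimality of non-closed subgroups such as dense or countable non-abelian subgroups, which are exactly the hard ones.

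A second, smaller gap: in the case $N=\{0\}$ you assert that $\pi\restriction_H$ embeds $H$ \emph{topologically} into $\Q_p^*$. A continuous injective homomorphism need not be an embedding, and local minimality does not pass from a coarser group topology to a finer one, so this claim needs justification. The paper avoids it: it only uses that $q\restriction_H$ is a continuous isomorphism onto its image, reduces to $H$ closed and non-discrete, invokes van Dantzig's theorem to get a compact open subgroup $K\leq H$ on which $q$ is a closed map (hence a topological isomorphism), and then applies Lemma \ref{last:lemma} to find an open copy of $\Z_p$ inside $q(K)$, so that Fact \ref{fac:ext} applies. (Your claim can in fact be repaired, e.g.\ by checking that every crossed homomorphism $A\to\Q_p$ for the multiplication action is principal, so such an $H$ is conjugate in $L$ to a subgroup of $\{0\}\times\Q_p^*$, but some argument must be supplied.) The case $A=\{1\}$ is fine as you wrote it.
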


We report in Theorem \ref{thm:retro} a result of Megrelishvili, ensuring that the group  in Theorem A is minimal.
Although it is not hereditarily minimal (e.g., its subgroup $(\Q_p,+)$ is not minimal by Fact \ref{Steph:Thm}), we show in  the classification Theorem D  that it contains (up to isomorphism) most locally compact solvable HM groups.

\medskip

Clearly, a \HM\ group is \HLM. In order to ensure the converse implication, we give the next definition.

\begin{defi}\label{def:CFN}
	For a topological group $G$ we consider the following properties:\\
	($\mathcal{N}_{fn}$) $G$ contains no finite normal non-trivial subgroups;\\
	($\mathcal{C}_{fn}$)
	Every infinite compact subgroup of $G$ satisfies ($\mathcal{N}_{fn}$).
\end{defi}

\begin{remark}\label{rem:pltor} Obviously, a  torsionfree group  $G$ satisfies both properties. Moreover, if $G$ is abelian, then $G$ is torsionfree if and only if it satisfies  ($\mathcal{N}_{fn}$).  It is also clear that  ($\mathcal{C}_{fn}$) implies ($\mathcal{N}_{fn}$) when $G$ is infinite compact.
\end{remark}

\begin{theoremB}\label{thm:herequiv}
	For a compact group $G$, the following conditions are equivalent:
	\ben [(a)]
	\item $G$ is a \HM \ group;
	\item $G$ is a \HLM \ group satisfying ($\mathcal{C}_{fn}$).
	\een
\end{theoremB}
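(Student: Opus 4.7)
The proof splits along the two implications.

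For (a) $\Rightarrow$ (b), HM trivially yields HLM, so only ($\mathcal{C}_{fn}$) needs work. I would assume for contradiction that some infinite compact $K \leq G$ has a finite non-trivial normal subgroup $N$, and pass to the centralizer $C := C_K(N)$, which has finite index in $K$ (since $K/C$ embeds in the finite group $\Aut(N)$) and is therefore open and infinite in $K$; note that $N$ is central in $C$. I would then seek an element $x \in C$ with $A := \overline{\langle x \rangle}$ infinite. Once such $x$ is in hand, $A$ is an infinite compact abelian subgroup of $G$, so $A \cong \Z_p$ by Corollary \ref{cor:spr}. Since $A$ and $N$ commute, $AN$ is a compact abelian subgroup of $G$ too, forcing $AN \cong \Z_p$ by a second application of Corollary \ref{cor:spr}. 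This contradicts the fact that $\Z_p$ is torsion-free while $AN$ contains the non-trivial torsion subgroup $N$.

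For (b) $\Rightarrow$ (a), I would first argue that $G$ is profinite. The connected component $G_0$ is a compact connected HLM subgroup of $G$, so by Fact \ref{fac:ext} it is a Lie group; if non-trivial it contains a non-trivial maximal torus $T$, and $T$, being abelian and infinite compact, has finite subgroups $\Z/n \leq T$ that are normal in $T$, contradicting ($\mathcal{C}_{fn}$). Hence $G_0 = \{e\}$ and $G$ is profinite. Now fix $H \leq G$ and set $K := \overline{H}$. If $K$ is finite then $H = K$ is minimal; otherwise $K$ is infinite profinite, and $H$ is a locally minimal dense subgroup of the compact (hence minimal) group $K$. The local version of the Prodanov--Stephenson criterion supplies a neighborhood $U$ of $e$ in $K$ such that $H \cap N' \neq \{e\}$ for every non-trivial closed normal $N' \leq K$ with $N' \subseteq U$. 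To promote this to full essentiality, take any non-trivial closed normal $N \leq K$ and, using profiniteness, choose an open normal $V \leq K$ with $V \subseteq U$. If $N \cap V \neq \{e\}$, local essentiality gives $H \cap N \supseteq H \cap (N \cap V) \neq \{e\}$; if $N \cap V = \{e\}$, then $N$ embeds in the finite group $K/V$ and so is finite, violating ($\mathcal{C}_{fn}$) applied to the infinite compact subgroup $K$ of $G$. Hence $H$ is essential in $K$, and therefore minimal.

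The principal obstacle is producing the element $x \in C$ with $\overline{\langle x \rangle}$ infinite in the first direction. If no such $x$ existed, then $C$ would be a compact torsion group; by Zelmanov's theorem $C$ is then locally finite and profinite, and every closed abelian subgroup of $C$ would be finite. I expect to exclude this scenario either via a preliminary structural lemma on compact HM groups (proved earlier in the paper, showing that compact HM groups are profinite and any infinite closed subgroup contains a copy of $\Z_p$) or by invoking the standard fact that every infinite profinite group contains an infinite closed abelian subgroup, which by Corollary \ref{cor:spr} must be $\Z_p$, yielding the desired non-torsion element.
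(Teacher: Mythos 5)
Your proposal is correct and follows essentially the same route as the paper: the direction (a)$\Rightarrow$(b) is exactly the content of Proposition \ref{prop:ndhmlc}(2) (an infinite compact abelian subgroup, obtained via Zelmanov's theorem, is $\Z_p$ by Prodanov, and a suitable infinite piece of it commuting with the finite normal subgroup yields the contradiction), while (b)$\Rightarrow$(a) is precisely Proposition \ref{pro:tor} --- upgrading local essentiality to essentiality in a profinite group satisfying ($\mathcal{N}_{fn}$) --- after ruling out a non-trivial connected component via Fact \ref{fac:ext}. The only slip is your claim that $AN$ is abelian: the finite normal subgroup $N$ need not be abelian, so you should instead take a non-trivial cyclic subgroup $\langle n\rangle\leq N$ and apply Fact \ref{TeoP} (or Corollary \ref{cor:spr}) to the infinite compact abelian group $A\langle n\rangle$, which is exactly the repair the paper makes by passing to a cyclic subgroup of $F$.
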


\begin{remark}\label{new:rem} One cannot replace compact by locally compact as the groups $\Q_p$ and $\R\rtimes \Z(2)$
show (easy  counter-examples are provided also by arbitrary infinite discrete abelian groups).
\end{remark}

Our next result extends Corollary \ref{cor:spr}.

\begin{theoremC}\label{thm:tozp}
Let $G$ be an infinite \HM\  locally compact group that is either compact or locally solvable. Then $G$ is either center-free or isomorphic to $\Z_p$, for some prime $p$.
\end{theoremC}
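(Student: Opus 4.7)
The plan is to analyze $Z:=Z(G)$. If $Z=\{e\}$ we are done, so assume $Z\ne \{e\}$ and aim to prove $G\cong \Z_p$. Since $Z$ is closed in $G$ it is locally compact; every subgroup of $Z$ is a subgroup of $G$ and hence minimal, so $Z$ is a \HM\ locally compact abelian group. By Corollary~\ref{cor:spr}, $Z$ is either finite or isomorphic to $\Z_p$. To rule out a finite non-trivial center in the compact case, I would use Theorem~B: it implies that $G$ satisfies $(\mathcal{C}_{fn})$, and since $G$ itself is infinite compact it satisfies $(\mathcal{N}_{fn})$, so the normal subgroup $Z$ cannot be finite and non-trivial. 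In the locally solvable case I expect the same exclusion, obtained by a parallel structural lemma ruling out finite non-trivial normal subgroups in infinite \HM\ locally compact locally solvable groups. Either way, $Z\cong \Z_p$.

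Next I show that $G/Z$ is a $p$-torsion group. For each $g\in G$ the closed subgroup $H_g:=\overline{\langle g,Z\rangle}$ is abelian (since $Z$ is central), hence a \HM\ locally compact abelian group containing $Z\cong\Z_p$. By Corollary~\ref{cor:spr}, $H_g\cong\Z_p$; since $Z$ and $H_g$ are both isomorphic to $\Z_p$ with $Z\subseteq H_g$, we get $Z=p^{n(g)}H_g$ for some $n(g)\ge 0$, and in particular $g^{p^{n(g)}}\in Z$. Thus every element of $G/Z$ has $p$-power order.

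The heart of the argument is to derive a contradiction from $G\ne Z$. My strategy is to produce $g,h\in G$ with $c:=[g,h]\in Z\setminus\{e\}$, by starting from any non-commuting pair and iteratively replacing it (e.g.\ by $(g,c)$ or $(h,c)$) while using the rigid control $H_x\cong \Z_p$ for every $x$ to ensure the iteration terminates. Once such $g,h$ are found, set $B:=\overline{\langle g,h,Z\rangle}$. Then $B'\subseteq Z\subseteq Z(B)$, so $B$ is $2$-step nilpotent. The center $Z(B)$ is a closed abelian \HM\ subgroup of $G$ containing $Z\cong \Z_p$, hence $Z(B)\cong \Z_p$ by Corollary~\ref{cor:spr}. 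Moreover $B/Z$ is topologically generated by the two $p$-torsion elements $\bar g,\bar h$ and is abelian (because $B'\subseteq Z$), so it is finite; therefore $B/Z(B)$ is finite as well. Schur's theorem then forces $B'$ to be finite, but $B'\subseteq Z(B)\cong\Z_p$ admits no non-trivial finite subgroup, so $B'=\{e\}$, contradicting $c\ne e$. Hence $G=Z\cong \Z_p$.

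The main obstacle is the commutator reduction just described: turning an arbitrary non-commuting pair into one whose commutator lies in $Z$. This is precisely where compactness or local solvability of $G$ must be used in an essential way, as it is what makes the successive-commutator process terminate in a $2$-step nilpotent configuration inside a closed subgroup on which our $\Z_p$-rigidity applies. I also anticipate that ruling out a finite non-trivial center in the locally solvable case (Step~1) will require an independent structural argument rather than the direct appeal to Theorem~B available in the compact case.
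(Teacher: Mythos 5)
Your Step~2 is correct and matches Corollary \ref{cor:pgr} of the paper, and your finishing move is sound: once one has $g,h$ with $e\neq[g,h]\in Z:=Z(G)\cong\Z_p$, the subgroup $B=\overline{\langle g,h,Z\rangle}$ is nilpotent of class $2$, $B/Z$ is finite, and Schur's theorem plus torsionfreeness of $\Z_p$ kill the commutator — this is essentially the paper's Lemma \ref{lem:Z1Z2} and Proposition \ref{new:thmB} in disguise. But the proposal has two gaps. The minor one is in Step~1: the exclusion of a finite non-trivial center in the locally solvable case is only ``expected''. It is genuinely not automatic, since infinite \emph{discrete} \HM\ groups can have finite non-trivial center (Example \ref{ex:tarski}(b)); one must first show that an infinite \HM\ locally solvable (or compact) group is non-discrete — the paper's Lemma \ref{lem:tnlf}, which itself rests on Facts \ref{fac:lslf} and \ref{fac:lfia} — and only then does Proposition \ref{prop:ndhmlc}(2) give property $(\mathcal{N}_{fn})$ and hence $Z\cong\Z_p$.

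The fatal gap is Step~3, and you flag it yourself: the entire weight of the theorem rests on producing, from $G\neq Z$, a non-commuting pair whose commutator is central (equivalently, a closed non-abelian nilpotent subgroup of class $2$ containing $Z$), and you give no argument that the proposed commutator descent terminates. The ``rigid control'' $H_x\cong\Z_p$ constrains cyclic subgroups only; it says nothing about iterated commutators, and in a torsion quotient that is not locally finite the descent need never reach the center nor a class-$2$ configuration — indeed, in the candidate groups discussed around Theorem \ref{add:prop:thm} (center open and isomorphic to $\Z_p$, quotient a Tarski monster of exponent $p$) any two non-commuting elements generate the whole non-nilpotent quotient, so no purely formal descent can work; compactness or local solvability must enter through a real theorem. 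The paper supplies exactly this: $G/Z$ is a torsion ($p$-)group, which is locally finite by Zelmanov's theorem (Fact \ref{fac:zel}(a)) in the compact case and by Fact \ref{fac:lslf} in the locally solvable case; if $G/Z$ were infinite, Hall--Kulatilaka (Fact \ref{fac:lfia}) would give an infinite abelian subgroup, whose preimage is nilpotent of class at most $2$ and hence isomorphic to $\Z_p$ by Corollary \ref{cor:nprodanov}, a contradiction (Proposition \ref{new:thmB} and Corollary \ref{new:corol:for:ThmB}); so $G/Z$ is finite and Lemma \ref{lem:Z1Z2} concludes that $G=Z\cong\Z_p$. Without some substitute for these inputs (local finiteness of $G/Z$ plus the existence of a suitable abelian/nilpotent subgroup), your argument is incomplete precisely at its heart.
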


In order to introduce our main result we need to first recall some folklore facts and fix the relevant notation.

\begin{notation} \label{new:notation}
For a prime $p$, let $\Z_p^*=(\Z_p\setminus p\Z_p,\cdot)$. Its torsion subgroup $F_p$ consists only of the $(p-1)$-th
 roots of unity if $p>2$, and $F_p = \{1,-1\}$ (the square roots of unity) if $p=2$. Moreover, it is cyclic and
\begin{equation*}
F_p \cong \begin{cases}
\Z(2) & \text{if } p = 2,\\
\Z(p-1) & \text{otherwise}.
\end{cases}
\end{equation*}
We denote by $C_{p}$ the subgroup of $\Z_p^*$ defined by
\begin{equation*}
C_{p}= \begin{cases}
1+4\Z_2 & \text{if } p = 2,\\
1+p\Z_p & \text{otherwise}.
\end{cases}
\end{equation*}

Finally, for $n\in \N$ we let $C_{p}^{p^n}=\{x^{p^n}: x\in C_p\} \leq C_p,$ so
\begin{equation}\label{Cppn:form}
C_{p}^{p^n}= \begin{cases}
(1+4\Z_2)^{2^n} = 1+2^{n+2}\Z_2 & \text{if } p = 2,\\
(1+p\Z_p)^{p^n} = 1+p^{n+1}\Z_p & \text{otherwise}.
\end{cases}
\end{equation}
\end{notation}

It is well known that $C_{p}\cong (\Z_p,+)$ and $\Z_p^* = C_p F_p \cong C_p \times F_p \cong (\Z_p,+) \times F_p$ as topological groups.

Now we provide two series of examples of hereditarily minimal metabelian locally compact groups that play a
prominent
role in our Theorem D:

\begin{example}\label{Exaaa}
Consider the natural action of $\Z_p^*$ on $(\Z_p,+)$ by multiplication, and the semidirect product
\[
K = (\Z_p,+)\rtimes \Z_p^*.
\]
Then $K$ is a subgroup of the group considered in Theorem A, so it is hereditarily locally minimal. Moreover, $K \cong (\Z_p,+)\rtimes \left( (\Z_p,+) \times F_p \right)$, so $K$ is compact and all of its non-abelian subgroups are minimal by Corollary \ref{cor:prec}. However, $K$ is not hereditarily minimal, as for example its compact abelian subgroup $\{0\} \rtimes \Z_p^* \cong \Z_p^* \cong (\Z_p,+) \times F_p$ is not hereditarily minimal by Fact \ref{TeoP}.
\begin{itemize}
\item[(i)] For a subgroup $F$ of $F_p$ and an integer $n\in \N$, consider the following subgroups of $K$:
\begin{gather*}
K_{p,F} = (\Z_p,+)\rtimes F \leq K_{p,F_p} \\
M_{p,n} = (\Z_p,+)\rtimes C_p^{p^n} \leq M_{p,0}.
\end{gather*}
For example, $K_{p,\{1\}} = (\Z_p,+)\rtimes \{1\}$ is isomorphic to $\Z_p$.
In Example \ref{ex:padic:rtimes:F}(a) we use a criterion for hereditary minimality of a  compact solvable group
(Theorem \ref{thm:charmeta}) in order to prove that $K_{p,F}$ is hereditarily minimal, while we use Theorem A and Theorem B to prove that $M_{p,n}$ is hereditarily minimal in Example \ref{padics:rtimes:padics}.

\item[(ii)] In case $p=2$ and $n\in \N$, we consider
also the group $T_n=(\Z_2,+)\rtimes_{\beta_n}C_2^{2^n}$ with the faithful action $\beta_n:C_2^{2^n}\times \Z_2 \to \Z_2 $ defined by
\begin{equation*}
\beta_n(y,x) =
\begin{cases}
yx & \text{ if } y\in C_2^{2^{n+1}},\\
-yx & \text{ if } y\in C_2^{2^{n}} \setminus C_2^{2^{n+1}}.
\end{cases}
\end{equation*}
(when no confusion is possible, we denote $\beta_n$ simply by $\beta$).
Note that the restriction $\beta' = \beta_n\restriction_{C_2^{2^{n+1}}\times \Z_2}:C_2^{2^{n+1}}\times \Z_2 \to \Z_2$ is the natural action by multiplication in the ring $\Z_2$, so $T_n \geq (\Z_2,+)\rtimes_{\beta'}C_2^{2^{n+1}} = M_{2,n+1}$. Obviously $[T_n:M_{2,n+1}]=2$, so $M_{2,n+1}$ is normal in $T_n$.
Another application of the criterion for hereditary minimality (Theorem \ref{thm:charmeta}) shows in Example \ref{ex:padic:rtimes:F}(b) that also the groups $T_n$ are \HM.
\end{itemize}
\end{example}

The following theorem classifies the locally compact solvable \HM\ groups by showing that these are precisely
the groups described in Example \ref{Exaaa}. Note that the only abelian ones among them are the groups $K_{p,\{1\}} = (\Z_p,+)\rtimes \{1\} \cong \Z_p$, for prime $p$.

\begin{theoremD}\label{thm:hmabc}
Let $G$ be an infinite locally compact solvable group, then the following conditions are equivalent:
	\ben
	\item  $G$ is \HM;
	\item $G$ is topologically isomorphic to one of the following groups:
	\ben [(a)]
		\item  $K_{p,F}  = \Z_p \rtimes F$, where $F\leq F_p$ for some prime $p$;
\item $M_{p,n}=\Z_p \rtimes C_p^{p^n}$, for some prime $p$ and  \ $n\in \N$;
\item $T_n=(\Z_2,+)\rtimes_{\beta}C_2^{2^n} $, for some $n\in \N$.
	\een \een
\end{theoremD}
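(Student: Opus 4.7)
The plan is to derive (2) $\Rightarrow$ (1) from Example \ref{Exaaa} --- which combines Theorem A, Theorem B, and the metabelian criterion (Theorem \ref{thm:charmeta}) to verify hereditary minimality for each of $K_{p,F}$, $M_{p,n}$, $T_n$ --- and to focus the effort on the implication (1) $\Rightarrow$ (2). Let $G$ be an infinite locally compact solvable HM group. Theorem C immediately yields a dichotomy: either $G \cong \Z_p$, covered by case (a) with $F = \{1\}$, or $Z(G) = \{1\}$. I continue under the latter assumption.

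\emph{Extracting a normal copy of $\Z_p$.} Let $N$ be the last non-trivial term of the derived series of $G$, a closed characteristic abelian subgroup. As a closed subgroup of the HM group $G$, $N$ is itself a locally compact abelian HM group, so by Corollary \ref{cor:spr} either $N$ is finite or $N \cong \Z_p$. A finite $N$ is excluded: $C_G(N)$ would have finite index in $G$, and iterating along the derived series then produces a non-trivial element of $Z(G)$, contradicting center-freeness. Hence $N \cong \Z_p$. \emph{Faithful action and splitting.} The conjugation map $\alpha \colon G \to \Aut(N) \cong \Z_p^*$ has $\ker \alpha = C_G(N) = N$ (if the containment were strict, $N$ would lie in $Z(C_G(N))$ and climbing the derived series would again yield a non-trivial central element of $G$). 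Thus $G/N$ embeds into $\Z_p^*$. Using $\Z_p^* = C_p F_p$ with $F_p$ finite torsion and $C_p$ torsion-free pro-$p$, a Schur--Zassenhaus type argument in the profinite setting produces a closed complement $H \leq G$, yielding $G = N \rtimes H$ with $H$ identified as a closed subgroup of $\Z_p^*$.

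\emph{Classifying $H$.} The closed subgroups of $\Z_p^*$ fall into three types: those contained in $F_p$, those contained in $C_p$ (hence equal to $C_p^{p^n}$ for some $n$), and genuinely mixed (diagonal) ones. If $H \leq F_p$, then $G = K_{p,F}$ realizes case (a); if $H \leq C_p$, then $G = M_{p,n}$ realizes case (b). The mixed case is handled by exploiting hereditary minimality: within a candidate $G = N \rtimes H$ one exhibits a closed subgroup violating Theorem B, typically by failing property $(\mathcal{C}_{fn})$ or by containing a non-minimal infinite compact abelian subgroup. For odd $p$, where $\gcd(|F_p|,p) = 1$ forces any $H$ to split into its torsion and pro-$p$ parts, this argument eliminates all genuinely mixed $H$. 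For $p = 2$, the torsion element $-1 \in F_2$ can genuinely interact with the pro-$2$ part $C_2$, and the surviving diagonal subgroups produce exactly the twisted semidirect products $T_n$ of case (c); the twist $\beta_n$ is forced because the straight ring-multiplication action on the same abstract group would create a subgroup that fails Theorem B. The main obstacle is this final classification: the inventory of non-minimal subgroups inside candidate semidirect products is delicate, and the appearance of the exotic family $T_n$ for $p = 2$ relies on a non-obvious twist that is not predicted by purely ring-theoretic considerations.
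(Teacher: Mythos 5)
Your skeleton is the paper's: prove (2)$\Rightarrow$(1) via Example \ref{Exaaa}, and for (1)$\Rightarrow$(2) extract a normal $N\cong\Z_p$ with $C_G(N)=N$, embed $G/N$ into $\Aut(N)\cong\Z_p\times F_p$, and classify (this is Proposition \ref{prop:ms} followed by Theorems \ref{thm:free} and \ref{prop:nonfree}). But the two steps you compress are exactly where the proof lives. The splitting $G=N\rtimes H$ cannot come from ``a Schur--Zassenhaus type argument in the profinite setting'': at that stage $G$ is not yet known to be compact (compactness is a \emph{consequence} of the classification, not an input), and in the decisive case the quotient $G/N$ is itself pro-$p$ --- this is precisely what produces $M_{p,n}$ and $T_n$ --- so the coprimality hypothesis of Schur--Zassenhaus fails outright. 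The paper obtains the complement from hereditary minimality itself: Proposition \ref{prop:tri} shows that for a non-torsion $x\notin N$ the closed subgroup $\overline{\langle x\rangle}$ meets $N$ trivially (essentiality of $\langle x\rangle$ in its closure plus a conjugation/torsionfreeness computation), and Theorem \ref{thm:free} still needs a separate argument that $j$ is a \emph{topological} isomorphism of $G/N$ onto an open subgroup of $\Z_p$, followed by a chain argument over the subgroups generated by $N$ and single elements to conclude that $G$ itself is one of these semidirect products. Related smaller gaps: the last non-trivial term of the derived series need not be closed, so one passes to its closure (Lemma \ref{lem:meta}); finiteness of $N$ is excluded because $G$ is non-discrete (Lemma \ref{lem:tnlf}) and hence satisfies ($\mathcal{N}_{fn}$) by Proposition \ref{prop:ndhmlc}(2) --- your ``iterate along the derived series to get a central element'' and the one-line justification of $C_G(N)=N$ do not work as stated; the paper needs the iterated-centralizer chain $N_{n+1}=C_G(N_n)$ in Proposition \ref{prop:ms} to reach $N=C_G(N)$.

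The second gap is the one you yourself flag as ``the main obstacle'': the classification of the possible $H$ together with the possible actions is not carried out. For $p>2$ the exclusion of mixed subgroups is Proposition \ref{prop:dich} (a $(p-1)$-divisibility argument applied inside $\Z_p\times F_p$); the identification of an arbitrary faithful $\Z_p$-action on $\Z_p$ with either $M_{p,n}$ or the twisted $T_n$ is Proposition \ref{prop:alpha}; the torsion case rests on Proposition \ref{prop:abc} and Lemma \ref{lem:uonique}; and in the non-torsionfree case with $p=2$ one must rule out groups of the form $(\Z_2\rtimes_{\alpha}\Z_2)\rtimes F_2$, which the paper does via the crossed-homomorphism fixed-point argument of Proposition \ref{prop:sknmin} combined with Theorem B (failure of ($\mathcal{C}_{fn}$)). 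Since these arguments are only gestured at, your proposal is a correct plan with the right reduction, but not yet a proof.
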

A locally compact \HM\ group need not be compact in general, as witnessed by the large supply of infinite discrete \HM\ groups.
Nonetheless, as the groups in Theorem D are compact and metabelian, one has the following:

\begin{corol}
If $G$ is an infinite \HM\ locally compact solvable group, then $G$ is compact metabelian.
\end{corol}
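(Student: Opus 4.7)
The plan is to deduce this corollary directly from Theorem D, which gives a complete classification of the infinite locally compact solvable \HM{} groups. Since the corollary asks only for two structural properties (compactness and metabelianness), all that is required is to verify that each of the three families $K_{p,F}$, $M_{p,n}$, $T_n$ from Theorem D enjoys these properties; there is no hard step, and the main (in fact only) obstacle has already been overcome in establishing Theorem D itself.

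First, I would apply Theorem D to conclude that $G$ is topologically isomorphic to one of the groups $K_{p,F} = \Z_p \rtimes F$ with $F \leq F_p$, or $M_{p,n} = \Z_p \rtimes C_p^{p^n}$, or $T_n = (\Z_2,+)\rtimes_\beta C_2^{2^n}$.

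Next, I would verify compactness case by case. In all three cases $G$ is the semidirect product of two compact groups: the base is $\Z_p$ (or $\Z_2$), which is compact, and the acting group is either the finite group $F \leq F_p$, or the closed subgroup $C_p^{p^n}$ of the compact group $\Z_p^*$ (recall from Notation \ref{new:notation} that $C_p^{p^n}$ is of the form $1+p^{n+1}\Z_p$ or $1+2^{n+2}\Z_2$, hence closed in $\Z_p$, hence compact). Therefore $G$ is compact as a topological semidirect product of compact groups.

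Finally, I would verify that $G$ is metabelian. In each of the three cases, the base group $\Z_p$ is an abelian closed normal subgroup of $G$, and the quotient $G/\Z_p$ is topologically isomorphic to $F$, $C_p^{p^n}$, or $C_2^{2^n}$, respectively; each of these is abelian (the last two as subgroups of the abelian group $\Z_p^*$). Hence the commutator subgroup $[G,G]$ is contained in the abelian subgroup $\Z_p$, so $[[G,G],[G,G]] = \{e\}$, proving that $G$ is metabelian. This finishes the proof.
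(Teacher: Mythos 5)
Your proposal is correct and follows the paper's own route: the paper derives this corollary directly from Theorem D, observing that the groups $K_{p,F}$, $M_{p,n}$ and $T_n$ are all compact (semidirect products of compact groups) and metabelian (abelian-by-abelian, with the abelian normal subgroup $\Z_p$). Your case-by-case verification just makes these two observations explicit.
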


Another nice consequence of Theorem D is the following: every closed non-abelian subgroup of $M_{p,n}$ is isomorphic to one of the groups in (b) or (c), while the closed abelian subgroups of $M_{p,n}$ are isomorphic to $\Z_p\cong K_{p,\{1\}}$.

The proof of  Theorem D is covered by Theorem \ref{thm:free} and Theorem \ref{prop:nonfree}, where we consider
 the torsionfree case and the non-torsionfree case, respectively.

Another application of  Theorem D is Theorem \ref{thm:htmkpf} in which  we classify the infinite locally compact, solvable, hereditarily totally minimal groups (see Definition \ref{def:htm}).

\begin{remark}\label{cor:compnofin}
Call a topological group $G$  {\em compactly hereditarily} {\em (locally)}  {\em minimal},  if every compact subgroup of $G$ is hereditarily (resp., locally) minimal.
We abbreviate it to $\chm$ and $\chlm$ in the sequel. For compact groups, being hereditarily minimal is equivalent to being compactly hereditarily minimal. Clearly, every discrete group is \CHM. Applying Theorem B to compact subgroups one can prove that
a topological group $G$ is  \CHM \  if and only if $G$ is a \CHLM \ group satisfying ($\mathcal{C}_{fn}$).
\end{remark}

\medskip
The next diagram summarizes some of the interrelations between the properties considered so far. The double arrows denote implications that always hold. The single arrows denote implications valid  under some additional assumptions.

\smallskip

\ \ \ \ \ \ \ \ \ \ \ \ \ \
$${\xymatrix@!0@C4.2cm@R=3.2cm{
		\mbox{CHLM}
		\ar@/_1.2pc/|-{(2)}[d]
        \ar@/_1.2pc/|-{(1)
		}[r]
		&
		\mbox{CHM} \ar@{=>}[l]
		\ar@/^1.2pc/|-{\hspace{10pt} compact}[d]  &\\
		\mbox{HLM}
		\ar@{=>}[u]
		\ar@/^1.2pc/|-{(3)
		}[r]
		&
		\mbox{HM}
		\ar@{=>}[l]
		\ar@{=>}[u]
}}$$

\smallskip

\noindent (1): This implication holds true for
groups satisfying ($\mathcal{C}_{fn}$) (Remark \ref{cor:compnofin}). \\
(2): This implication holds true for totally disconnected locally compact groups (Proposition \ref{prop:ltdc}(1)).\\
(3): This implication holds true for compact
groups satisfying ($\mathcal{C}_{fn}$)  (Theorem B).

\smallskip

The group $\Q_{p}$ witnesses that the implication $\hm \Longrightarrow \chm$ \  cannot be inverted if compact is replaced by locally compact (and  totally disconnected). Indeed, $\Q_{p}$ is not minimal (so not \HM), yet every compact subgroup of $\Q_{p}$ is isomorphic to $\Z_p$, which means that $\Q_{p}$ is $\chm$. On the other hand, every non-trivial compact subgroup of the locally compact group $G=\R\times \Z_p$ is topologically isomorphic to $\Z_p$, so $G$ is $\chm$ by Prodanov's theorem. Yet $G$ is not \HLM \ by Fact \ref{fac:ext}.
This shows that none of the vertical arrows in the diagram can be inverted in general.

\bigskip

The paper is organized as follows. The proof Theorem A, given in \S\ref{Proof of Theorem A}, is articulated in several steps.
We first recall the crucial criteria for (local) minimality of dense subgroups (Fact \ref{Crit}). Using these criteria, we show in \S\ref{sub:nab} that every non-abelian subgroup $H$ of $L$ is locally minimal, while  \S \ref{sub:main} takes care of the abelian subgroups of $L$.

 Section \S\ref{lcHMSection} contains some general results on locally compact HM groups
and the proof of Theorem B.
In \S\ref{discreteHM} we provide a brief review on the relevant connection between discrete categorically compact groups
and the discrete HM groups. 
In \S \ref{subsection:proofB} we give some general results on non-discrete locally compact HM groups, proving in Proposition \ref{prop:ndhmlc} that they are totally disconnected, and contain a copy of $\Z_p$ (in particular, an infinite locally compact HM group is torsion if and only if it is discrete, and in this case it is not locally finite).
Furthermore, such a group $G$ satisfies ($\mathcal{N}_{fn}$),
so that either $Z(G) = \{e\}$, or $Z(G) \cong \Z_p$ for a prime $p$. In the
latter case, $G$ is also torsionfree by Corollary \ref{cor:pgr}. So a non-discrete locally compact HM group is either center-free or torsionfree.
Finally, we prove Theorem B (making use of Proposition \ref{pro:tor}) and apply Theorem B to see in Example \ref{padics:rtimes:padics} that the groups $M_{p,n}$ are HM.

 In \S\ref{lcHM} we explore  infinite non-discrete locally compact HM groups with non-trivial center,
proving in Corollary \ref{cor:nprodanov} that the hypercentral ones are isomorphic to $\Z_p$ and we give a proof of Theorem C.
In Theorem \ref{add:prop:thm} we collect some necessary conditions a non-discrete locally compact HM group with non-trivial center must satisfy.

 In \S\ref{Semidirect products of p-adic integers} we prepare the tools for the proof of Theorem D, by proving that the groups introduced in Example \ref{Exaaa} are pairwise non-isomorphic (Corollary \ref{thm:pair} and propositions \ref{prop:p=2} and \ref{Tn-Mpn:non-isom}) and by classifying the semidirect products of $\Z_p$ with some compact subgroups of $\Z_p^*$ (Proposition \ref{prop:alpha} and Lemma \ref{lem:uonique}).

 Theorem D is proved  in \S\ref{Proof of Theorem D}. To this end we first provide a criterion Theorem \ref{thm:charmeta}, used in Example \ref{ex:padic:rtimes:F} to check that the groups $K_{p,F}$ are HM. Another consequence of Theorem \ref{thm:charmeta} is Lemma \ref{laaaasts:lemma}, that we apply to
show that the groups $T_n$ are HM in Example \ref{ex:Tn}.
We start the proof of Theorem D in \S\ref{The general case}, by proving some reduction results
(Proposition \ref{prop:ms}), and some general results (Propositions \ref{prop:tri}, \ref{prop:abc} and
\ref{prop:dich}).
 In \S\ref{Torsionfree case} we consider the torsionfree case of Theorem D in Theorem \ref{thm:free}, while the non-torsionfree case Theorem \ref{prop:nonfree} is proved in \S\ref{Non-torsionfree case}, based on the technical result Proposition \ref{prop:sknmin} dealing with the case $p=2$.

We dedicate \S\ref{Hereditarily  totally minimal topological  groups} to  hereditarily totally minimal groups  (HTM for short, see Definition \ref{def:htm}). The only locally compact solvable ones to consider are the groups classified in Theorem D, and we first prove in Proposition \ref{prop:kpfhtm} that
 the groups $K_{p,F}$ are HTM.  Then we see in Proposition \ref{prop:mpntn} that the HM groups $M_{p,n}$ and $T_n$ are not HTM, leading us to Theorem \ref{thm:htmkpf}, which describes the groups $K_{p,F}$ as the only infinite locally compact solvable HTM groups.

The last \S\ref{Open questions and concluding remarks} collects some open questions, a partial converse to Theorem \ref{add:prop:thm}, and some final remarks.

\bigskip

\subsection{Notation and terminology}
We denote by $\Z$  the group of integers, by $\R$ the real numbers, and by  $\N$ and $\N_{+}$ the non-negative integers and positive natural numbers, respectively. For $n\in \N_+$, we denote by $\Z(n)$ the finite cyclic group with $n$ elements.
If $p$ is a prime number, $\Q_p$ stands for the field of $p$-adic numbers, and $\Z_p$ is its subring of $p$-adic integers.

Let $G$ be a group. We denote by $e$ the identity element.
If $A$ is a non-empty subset of $G$, we denote by $\langle A\rangle$ the subgroup of $G$ generated by $A$. In particular, if $x$ is an element of $G$, then $\langle x\rangle$ is a cyclic subgroup. If $F=\langle x\rangle$ is finite, then $x$ is called a \emph{torsion} element, and $o(x)= |F|$ is the \emph{order} of $x$. We denote by $t(G)$ the torsion part of the group $G$ and $G$ is called \emph{torsionfree} if $t(G)$ is  trivial. The {\it centralizer} of $x$  is $C_G(x)$.  If the {\it center} $Z(G)$ is trivial, then we say that $G$ is \emph{center-free}. A group $G$ is called  \emph{$n$-divisible} if $nG=G$ for $n\in \N_+$.

Let $\mathcal P$ be an algebraic (or set-theoretic) property.  A group is called \emph{locally $\mathcal P$} if every finitely generated subgroup has the property $\mathcal P$. For example, in a locally finite group every finitely generated subgroup is finite.

The \emph{$n$-th center} $Z_n(G)$ is defined as follows for $n\in \N$. Let $Z_0(G) = \{e\}$,  $Z_1(G) = Z(G)$, and assume that $n > 1$ and $Z_{n-1}(G)$ is already defined. Consider the canonical projection $\pi \colon G \to G/Z_{n-1}(G)$ and let $Z_n(G) = \pi^{-1} Z (G/Z_{n-1}(G) )$. Note that
$Z_n(G)=\{x\in G: [x,y]\in Z_{n-1}(G)  \text{ for every } y\in G\}$. This produces an ascending chain of subgroups $Z_n(G)$ called the upper central series of $G$, and a group is {\em nilpotent} if $Z_n(G) = G$ for some $n\in \N$. In this case, its nilpotency class is the minimum of such $n$. For example, the groups with nilpotency class at most $1$ are the abelian groups.
One can continue the upper central series to infinite ordinal numbers via transfinite recursion: for a limit ordinal $\lambda$, define $Z_{\lambda }(G)=\bigcup _{\alpha <\lambda } Z_{\alpha }(G)$.
A group is called {\it hypercentral} if it coincides with $Z_{\alpha }(G)$ for some ordinal $\alpha$.
Nilpotent groups are obviously hypercentral, while  hypercentral groups are locally nilpotent.

We denote by $G'=G^{(1)}$ the {\it derived subgroup} of $G$, namely the subgroup of $G$ generated by all commutators $[a,b]=aba^{-1}b^{-1},$ where $a,b\in G.$ For $n\geq 1$, define $G^{(n)}=(G^{(n-1)})'$ and also  $G^{(0)}=G$.  We say that $G$ is {\em solvable} of class $n$ for some $n\in \N$, if $G^{(n)}=\{e\}$ and $G^{(m)}\ne\{e\}$ for $0\leq m<n$. If $G$ is solvable of class $n$, then $G^{(n-1)}$ is abelian. In particular, $G$ is  {\em metabelian}, if $G$ is solvable of class at most $2$.

For an integral domain (in particular, a field) $A$ we denote by $A^*$ the multiplicative group of all invertible elements of $A$ (resp., the group $(A\setminus\{0\},\cdot)$).

All the topological groups in this paper are assumed to be Hausdorff. For a topological group $G$, the connected component of $G$ is denoted by $c(G)$.   For a subgroup $H\leq G$, the closure of $H$ is denoted by $\overline{H}.$
A topological group is {\emph{precompact} if it is isomorphic to a subgroup of a compact group. Let $S$ and $T$ be topological groups and $\alpha:S\times T\to T$ be a continuous action by automorphisms.  We say that the action $\alpha$ is \emph{faithful}
		if $\ker\alpha=\{s\in S: \forall t\in T \ \alpha(s,t)=t \}$ is trivial.

All unexplained terms related to general topology can be found in \cite{En}. For background on abelian groups, see \cite{Fuc}.

\section{Proof of Theorem A}\label{Proof of Theorem A}
There exist useful criteria for establishing the minimality (local minimality) of a dense subgroup of a minimal (respectively, locally minimal) group. These criteria are based on the following definitions.
\begin{defi}
Let $H$ be a subgroup of a topological group $G$.
\ben
	\item \cite{P,S71} $H$ is said to be {\it essential} in $G$ if $H\cap N\neq \{e\}$ for every non-trivial closed normal subgroup $N$ of $G.$
	\item \cite{ACDD} $H$ is  {\it locally essential} in $G$ if there exists a neighborhood $V$  of $e$ in $G$ such that $H\cap N\neq \{e\}$ for every non-trivial closed normal subgroup $N$ of $G$ which is contained in $V.$\een
\end{defi}

\begin{fact}\label{Crit} Let $H$ be a dense subgroup of a topological group $G.$
\ben \item  \cite[Minimality Criterion]{B}  $H$ is minimal if and only if $G$ is minimal and $H$ is essential in $G$ (for compact $G$ see also \cite{P,S71}).
\item  \cite [Local Minimality Criterion]{ACDD}  $H$ is locally  minimal if and only if $G$ is locally minimal and $H$ is locally essential in $G.$ \een
\end{fact}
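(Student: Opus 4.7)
The plan is to prove both items in parallel, observing that (2) is the localized version of (1): replace ``essential'' by ``locally essential'' throughout, and restrict every argument involving closed normal subgroups of $G$ to those contained in the distinguished neighborhood $V$ of $e$ witnessing local minimality. I will describe the proof of (1) in full and indicate at the end the modifications needed for (2).

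\textbf{Necessity.} Assume $H$ is dense and minimal in $G$. Any coarser Hausdorff group topology $\tau'\subseteq\tau$ on $G$ restricts to a Hausdorff group topology $\tau'|_H\subseteq\tau|_H$ on $H$, which by minimality of $H$ must equal $\tau|_H$; a density argument (symmetric neighborhoods of $e$ in $G$ are determined by their traces on the dense subgroup $H$ in both topologies) then lifts this equality to $\tau'=\tau$ on $G$, yielding minimality of $G$. For essentiality, if $N\trianglelefteq G$ were a non-trivial closed subgroup with $N\cap H=\{e\}$, the restriction to $H$ of the quotient map $q\colon G\to G/N$ would be an injective continuous homomorphism, and pulling back the quotient topology through $q|_H$ would yield a Hausdorff group topology on $H$ strictly coarser than $\tau|_H$, contradicting minimality of $H$.

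\textbf{Sufficiency.} Assume $G$ is minimal and $H$ is essential, and let $\sigma\subseteq\tau|_H$ be a Hausdorff group topology on $H$. The strategy is to use a graph-with-completion construction to manufacture from $\sigma$ a Hausdorff group topology on $G$ that can be compared with $\tau$ via minimality. Concretely, form the Raikov completion $(\widetilde{H_{\sigma}},\widetilde{\sigma})$ of $(H,\sigma)$ and take the closure $L$ of the diagonal $\iota(H)=\{(h,h):h\in H\}$ in $G\times \widetilde{H_{\sigma}}$; then $L$ is a closed Hausdorff topological subgroup. The first projection $p_1\colon L\to G$ is continuous with dense image in $G$, and its kernel $N=\ker p_1\leq \{e\}\times \widetilde{H_{\sigma}}$ is a closed normal subgroup of $L$. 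Minimality of $G$, together with the density of $p_1(L)$, upgrades $p_1$ to a topological isomorphism onto $G$, and the transport of $N$ through this isomorphism yields a closed normal subgroup of $G$ with trivial intersection with $H$; essentiality forces this subgroup to be trivial, whence $N=\{e\}$ and $\sigma=\tau|_H$, as desired.

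The main obstacle is the sufficiency step, specifically promoting $p_1$ from a continuous surjection-onto-a-dense-subgroup to a topological isomorphism: this is where minimality of $G$ must be carefully invoked (together with completeness of $L$ inside the completion of $G\times \widetilde{H_{\sigma}}$) to prevent $L$ from collapsing to a proper subgroup of $G$ along the first coordinate, and to ensure that the induced topology on $G$ is indeed Hausdorff and coarser than $\tau$. For (2), the same construction works once every instance of ``closed normal subgroup of $G$'' is replaced by ``closed normal subgroup of $G$ contained in $V$'' and ``essential'' is replaced by ``locally essential''; the neighborhood $V$ also localizes the completion step, which now uses only the trace $V\cap p_1(L)$ and the analogous neighborhood in $\widetilde{H_{\sigma}}$ furnished by the local minimality of $G$.
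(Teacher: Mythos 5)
The paper does not prove this Fact at all (it is quoted from Banaschewski \cite{B} and from \cite{ACDD}), so your attempt can only be measured against the standard proofs. Your necessity direction is essentially the standard one and is fine in outline, although two sub-steps are asserted rather than argued: that the topology pulled back along $q\restriction_H$ is \emph{strictly} coarser, and that $\tau'\restriction_H=\tau\restriction_H$ lifts to $\tau'=\tau$; both follow from the usual density/closure computation (if $VN\cap H\subseteq U\cap H$ with $VN$ open and $H$ dense, then $N\subseteq \overline{U}$ for every $U$, forcing $N=\{e\}$), so these are fixable gaps of rigor.

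The genuine gap is in sufficiency. The step ``minimality of $G$, together with the density of $p_1(L)$, upgrades $p_1$ to a topological isomorphism onto $G$'' is not a valid principle: minimality of $(G,\tau)$ constrains coarser Hausdorff group topologies \emph{on} $G$, not continuous homomorphisms \emph{into} $G$ with dense image. For instance, $\Z$ with the $p$-adic topology maps continuously, injectively and with dense image into the minimal group $\Z_p$, yet this map is neither surjective nor open; so nothing forces $p_1(L)=G$, nor injectivity or openness of $p_1$. Moreover the argument is circular: you invoke the isomorphism (which presupposes $\ker p_1$ trivial) in order to transport $N=\ker p_1$ into $G$ and only then kill it by essentiality. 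The standard way to make your idea work is to construct the map first, not its graph: extend the identity $(H,\tau\restriction_H)\to (H,\sigma)\hookrightarrow K$, where $K$ is the Raikov completion of $(H,\sigma)$, to a continuous homomorphism $f\colon G\to K$ (this uses only that $H$ is dense in $G$ and $K$ is complete); then $\ker f$ is a closed normal subgroup of $G$ meeting $H$ trivially, hence trivial by essentiality; the initial topology of the injective $f$ is then a coarser Hausdorff group topology on $G$, equal to $\tau$ by minimality of $G$, and its restriction to $H$ is $\sigma$, whence $\sigma=\tau\restriction_H$. Your $L$ is precisely the graph of this $f$, but without building $f$ one cannot establish the properties of $L$ you need. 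The same defect carries over to your treatment of item (2), whose proof in \cite{ACDD} requires additional care beyond a word-for-word localization.
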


In this section, $L$ denotes the group $(\Q_p,+)\rtimes \Q_p^*$.

 In \S\ref{sub:nab}, we show that every non-abelian subgroup $H$ of $L$ is essential (in particular, locally essential) in its closure $\overline{H}.$ Since the latter group is locally compact (and thus locally minimal), we conclude by the above criterion that $H$ is also locally minimal. At the same time, we deduce by the above  Minimality Criterion that $H$ is minimal if and only if $\overline{H}$ is minimal in Corollary \ref{thm:dense}. In particular, every precompact non-abelian subgroup of $L$ is minimal. Finally, using Fact \ref{fac:ext},  we prove in \S \ref{sub:main} that also the abelian subgroups of $L$ are locally minimal.

\subsection{Non-abelian subgroups of $L$}\label{sub:nab}

We begin this section with some easy lemmas of independent interest.
\begin{lemma}\label{lem:tre}

If $G$ is a group, then $G'$ non-trivially meets every normal subgroup $N$ of $G$ not contained in $Z(G)$.

In particular, if $G$ is a topological group with trivial center, then $G'$ is essential in $G$.

\end{lemma}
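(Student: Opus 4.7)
The plan is to produce, for each normal $N \not\leq Z(G)$, an explicit non-trivial element of $G' \cap N$ as a commutator.

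First, I would pick an element $n \in N$ witnessing $N \not\leq Z(G)$, that is, $n \in N \setminus Z(G)$. By definition of the center, there exists some $g \in G$ such that $ng \neq gn$, equivalently $[n,g] = n g n^{-1} g^{-1} \neq e$. This commutator lies in $G'$ by the very definition of the derived subgroup.

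Next I would argue that $[n,g]$ also lies in $N$. Rewrite $[n,g] = n \cdot (g n^{-1} g^{-1})$. Since $N$ is normal in $G$, the conjugate $g n^{-1} g^{-1}$ belongs to $N$, and since $n \in N$ as well, the product lies in $N$. Hence $e \neq [n,g] \in G' \cap N$, establishing the first statement.

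For the ``in particular'' part, assume $Z(G) = \{e\}$ and let $N$ be any non-trivial closed normal subgroup of $G$. Then $N$ is certainly not contained in the trivial subgroup $Z(G) = \{e\}$, so the first part applies and yields $G' \cap N \neq \{e\}$. This is exactly the definition of $G'$ being essential in $G$.

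There is no real obstacle here: the lemma is a purely algebraic observation whose content is the commutator identity $[n,g] = n(gn^{-1}g^{-1}) \in N$ for $n \in N \triangleleft G$. The only mild point to check is that we did not need $G$ to be a topological group for the first assertion, and that in the second assertion closedness of $N$ is irrelevant to the algebraic verification; it only enters because essentiality is defined in terms of closed normal subgroups.
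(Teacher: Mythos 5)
Your proof is correct and is essentially the paper's own argument: choose $n\in N\setminus Z(G)$, find $g$ with $[n,g]\neq e$, and observe $[n,g]\in N\cap G'$ by normality of $N$, with the essentiality statement following immediately when $Z(G)=\{e\}$. Nothing further is needed.
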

\begin{proof}

Let $a\in N \setminus Z(G)$,
and let $b\in G$ be such that $e \neq [a,b]=aba^{-1}b^{-1}$. Then $[a,b]\in N\cap G'$, as $N$ is normal.
\end{proof}

\begin{lemma} \label{lem:abel}
	If $H$ is a non-abelian subgroup of a group $G$, then $H\cap G'$ is non-trivial.
\end{lemma}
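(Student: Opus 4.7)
The plan is to observe that this is essentially a tautological consequence of the definition of the derived subgroup together with the fact that $H'\leq G'$ whenever $H\leq G$.

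First I would unpack the hypothesis: since $H$ is non-abelian, there exist elements $a,b\in H$ with $ab\neq ba$, equivalently $[a,b]=aba^{-1}b^{-1}\neq e$. Because $H$ is a subgroup of $G$, both $a,b$ (and their inverses) lie in $H$, so the commutator $[a,b]$ lies in $H$.

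Next I would invoke the definition of the derived subgroup: $G'$ is generated by all commutators of elements of $G$, so in particular $[a,b]\in G'$. Combining the two memberships gives $e\neq [a,b]\in H\cap G'$, which is the desired conclusion.

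There is essentially no obstacle here; the content of the lemma can be stated more structurally as the inclusion $H'\leq H\cap G'$, together with the observation that $H'$ is non-trivial precisely when $H$ is non-abelian. I expect the author's proof to be a one-line version of exactly this argument, and its role in the paper is presumably to be combined with Lemma \ref{lem:tre} so as to locate non-trivial intersections of non-abelian subgroups of $L$ with normal subgroups of their closures, in preparation for verifying the essentiality hypothesis of the Minimality Criterion in Fact \ref{Crit}.
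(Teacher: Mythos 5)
Your argument is correct and is exactly the paper's proof: pick non-commuting $a,b\in H$ and observe that $e\neq[a,b]\in H\cap G'$. Nothing further is needed.
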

\begin{proof}
	If $a,b$ are non-commuting elements of $H$, then $\{e\} \neq [a,b]\in H\cap G'.$
\end{proof}

\begin{lemma}\label{lem:cyc}
Every non-trivial subgroup $H$ of $\Q_p$ is essential.
\end{lemma}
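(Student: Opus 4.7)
The plan is to use the well-known classification of closed subgroups of $\Q_p$, namely that the non-trivial closed subgroups of $(\Q_p,+)$ are exactly $\Q_p$ itself and the subgroups $p^n\Z_p$ for $n\in\Z$ (these are the ideals of the ring $\Z_p$ scaled by powers of $p$, and they form a neighborhood basis at $0$ together with $\Q_p$). Since $\Q_p$ is abelian, essentiality of $H$ just means $H\cap N\ne\{0\}$ for every such non-trivial closed $N$.

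First I would pick any non-zero element $h\in H$ and write $h=p^k u$ with $k\in\Z$ and $u\in\Z_p^*$, using the $p$-adic valuation. The case $N=\Q_p$ is trivial since $H\cap\Q_p=H\ne\{0\}$. For the remaining case $N=p^n\Z_p$, I would split:
\begin{itemize}
\item If $k\geq n$, then $h\in p^k\Z_p\subseteq p^n\Z_p=N$, so $h\in H\cap N\setminus\{0\}$.
\item If $k<n$, set $m=n-k>0$ and consider $mh=p^k(mu)$. Since $H$ is a subgroup, $mh\in H$. Moreover $mh=p^n\cdot p^{k-n}mh$...
\end{itemize}
actually it is cleaner to use a power of $p$: the element $p^{n-k}\cdot h$ need not be a $\Z$-multiple of $h$, so let me just multiply $h$ by the integer $p^{n-k}$, obtaining $p^{n-k}h = p^n u\in p^n\Z_p\setminus\{0\}$ (as $u\ne 0$ implies $p^n u\ne 0$). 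Since $H$ is a subgroup of $(\Q_p,+)$, the integer multiple $p^{n-k}h$ lies in $H$, so again $H\cap N\ne\{0\}$.

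The argument is genuinely short; the only thing to invoke is the classification of closed subgroups of $\Q_p$, which I would cite (or briefly justify by noting that a proper closed subgroup of $\Q_p$ is contained in some compact open subgroup $p^n\Z_p$, hence equals $p^m\Z_p$ for some $m\geq n$ by the description of closed subgroups of $\Z_p$). There is no real obstacle here: the content of the lemma is simply that $\Q_p$ has an essentially linear lattice of closed subgroups totally ordered by inclusion, and any non-zero element can be pushed into every such subgroup by multiplication by a sufficiently large power of $p$.
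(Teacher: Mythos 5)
Your proof is correct and takes essentially the same route as the paper: both arguments come down to the fact that a non-trivial closed subgroup of $\Q_p$ contains some $p^m\Z_p$, after which a non-zero element $p^k u$ of $H$ (with $u\in\Z_p^*$) is pushed into it by multiplying by the integer $p^{\max\{m-k,0\}}$. The only cosmetic difference is that you invoke the full classification of closed subgroups of $\Q_p$, while the paper uses just the weaker observation that a closed subgroup containing $p^m b$ with $b\in\Z_p^*$ must contain $\overline{p^m b\,\Z}=p^m\Z_p$.
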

\begin{proof}
First note that an element $e \neq x \in \Q_p$ has the form $x = p^n a$ for $a \in \Z_p^*$, and $n \in \Z$,
so  its generated subgroup is $\langle x \rangle = x \Z = p^n a \Z$ (note that here $\Z$ carries the $p$-adic topology, induced by $\Q_p$).

Let $N$ be a non-trivial closed subgroup of $\Q_p$, and let $H \geq p^n a \Z$, and $N \geq p^m b \Z$ for some
$a, b \in \Z_p^*$ and $n, m \in \Z$.
Being closed, $N$  also contains the subgroup $p^m b \Z_p$, which coincides with $p^m \Z_p$.
Then $e \neq a p^{ \max \{n,m\} } \in N \cap H$.
\end{proof}

\begin{lemma}\label{lem:trabel}
If $H$ is a non-abelian subgroup of $L$, then $Z(H)$ is trivial.
\end{lemma}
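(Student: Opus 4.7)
The plan is to take an arbitrary $(a,s)\in Z(H)$ and deduce $(a,s)=(0,1)$. Writing out the commutation relation $(a,s)(b,t)=(b,t)(a,s)$ for every $(b,t)\in H$ via the semidirect product law $(x,u)(y,v)=(x+uy,uv)$, one sees that it collapses to the scalar identity
\[
a(1-t)=b(1-s) \qquad \text{for all } (b,t)\in H.
\]

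First I would dispose of the case $s=1$. The identity becomes $a(1-t)=0$ for every $(b,t)\in H$. If some element of $H$ has second coordinate $t\neq 1$, we conclude $a=0$ and hence $(a,s)=e$. Otherwise $H\subseteq \Q_p\times\{1\}$, which is abelian, contradicting the standing hypothesis on $H$.

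In the remaining case $s\neq 1$, setting $\lambda=a/(1-s)\in\Q_p$, the identity forces every $(b,t)\in H$ to satisfy $b=\lambda(1-t)$. The key observation is that the locus
\[
A_\lambda=\{(\lambda(1-t),t) : t\in\Q_p^*\}
\]
is actually a subgroup of $L$; indeed, a short direct computation shows that $A_\lambda$ is the conjugate $(\lambda,1)\,(\{0\}\times\Q_p^*)\,(\lambda,1)^{-1}$ of the abelian subgroup $\{0\}\times\Q_p^*$, and is therefore abelian. But then the inclusion $H\subseteq A_\lambda$ would make $H$ abelian, again a contradiction.

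I do not foresee a real obstacle. The main point is the recognition that the affine locus $\{(\lambda(1-t),t)\}$ cut out by the centralizer condition is precisely a conjugate of the abelian factor $\{0\}\times\Q_p^*$, which forces the would-be center to be trivial as soon as $H$ is non-abelian.
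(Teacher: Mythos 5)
Your proof is correct, and it takes a somewhat different route from the paper's. Both arguments start from the same computation: $(a,s)$ commutes with $(b,t)$ in $L=(\Q_p,+)\rtimes\Q_p^*$ exactly when $a(1-t)=b(1-s)$. The paper then plays two non-commuting witnesses $(a,b),(c,d)\in H$ against each other, multiplying and subtracting the resulting scalar relations to force $m=0$ and $n=1$ directly. You instead turn the relation into a structural statement about centralizers: if $s=1$ and $a\neq 0$ the condition confines $H$ to $\Q_p\times\{1\}=L'$, while if $s\neq1$ it confines $H$ to the locus $A_\lambda=\{(\lambda(1-t),t)\}$ with $\lambda=a/(1-s)$, which you correctly identify as the conjugate $(\lambda,1)(\{0\}\times\Q_p^*)(\lambda,1)^{-1}$ (indeed $(\lambda,1)(0,t)(\lambda,1)^{-1}=(\lambda(1-t),t)$), hence abelian; in either case a non-identity central element would force $H$ to be abelian. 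In effect you prove the stronger fact that $C_L(z)$ is abelian for every $z\neq e$ (i.e., $L$ is a CA-group), from which the lemma is immediate, and this viewpoint is consonant with the paper's own later use of $C_L(h)=L'$ in Case 1 of the proof of Theorem A. What the paper's manipulation buys is brevity and no case split; what your argument buys is a cleaner conceptual explanation of \emph{why} the center must die, and a reusable description of all centralizers in $L$.
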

\begin{proof}
Let $(m,n)\in Z(H)$, and we  show that $m=0$ and $n=1$.

Let $(a,b), (c,d)\in H$ be non-commuting elements.
Then $(m,n)$ commutes with both $(a,b)$ and $(c,d)$ while the latter two elements do not commute with each other. This implies the following:
\ben \item $a(1-n)=m(1-b)$,
	 \item $c(1-n)=m(1-d)$,
	 \item $a(1-d)\neq c(1-b)$.\een
Multiply $(1)$ by $c$ to obtain $ac(1-n)=mc(1-b).$ This together with $(2)$ imply that $am(1-d)=mc(1-b).$ In view of $(3),$ the latter equality is possible only if $m=0.$ Using $(1)-(2)$   we now obtain $a(1-n)=c(1-n)=0.$ By $(3)$, either $a \neq 0$ or $c \neq 0$, and thus $n=1.$
\end{proof}
	
\begin{prop}\label{nonab:subgr:essent}
If $H$ is a non-abelian subgroup of $L$, then $H$ is essential in $\overline{H}$.
\end{prop}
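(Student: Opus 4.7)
The plan is to reduce essentiality of $H$ in $G:=\overline{H}$ to essentiality inside the abelian subgroup $(\Q_p,+)\times\{1\}$ of $L$, where Lemma~\ref{lem:cyc} applies directly. Since $H\subseteq G$ contains non-commuting elements, $G$ is itself a non-abelian subgroup of $L$, so Lemma~\ref{lem:trabel} gives $Z(G)=\{e\}$, and then the second assertion of Lemma~\ref{lem:tre} shows that $G'$ is essential in $G$.

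The key structural observation is that because $\Q_p^*$ is abelian, a short calculation with the multiplication $(a,b)(c,d)=(a+bc,bd)$ of $L$ shows that every commutator in $L$ has trivial second coordinate. Hence $L'\subseteq (\Q_p,+)\times\{1\}$, and in particular $G'\subseteq (\Q_p,+)\times\{1\}$. Identifying this subgroup with $(\Q_p,+)$, I set $H_0:=H\cap\bigl((\Q_p,+)\times\{1\}\bigr)$ and, for each non-trivial closed normal subgroup $N$ of $G$, $N_0:=N\cap\bigl((\Q_p,+)\times\{1\}\bigr)$.

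Given such an $N$, essentiality of $G'$ in $G$ yields $\{e\}\neq G'\cap N\subseteq N_0$; and since $G$ is closed in $L$, $N$ is closed in $L$ as well, so $N_0$ is moreover a closed subgroup of $(\Q_p,+)$. On the other hand, Lemma~\ref{lem:abel} applied to the inclusion $H\leq G$ gives $\{e\}\neq H\cap G'\subseteq H_0$. Lemma~\ref{lem:cyc} then says that the non-trivial subgroup $H_0\leq(\Q_p,+)$ is essential, so it meets the non-trivial closed subgroup $N_0$ non-trivially. Hence $\{e\}\neq H_0\cap N_0\subseteq H\cap N$, as required. Aside from the commutator calculation in $L$, the argument just assembles the four preceding lemmas, so no serious obstacle is expected.
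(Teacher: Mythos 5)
Your proposal is correct and follows essentially the same route as the paper: trivial center of $\overline{H}$ (Lemma \ref{lem:trabel}) plus Lemma \ref{lem:tre} to get essentiality of the derived subgroup, Lemma \ref{lem:abel} to see $H$ meets the commutator subgroup, and then Lemma \ref{lem:cyc} inside the copy of $(\Q_p,+)$ containing $L'$ to intersect with $N$. The only cosmetic difference is that you verify $L'\subseteq(\Q_p,+)\times\{1\}$ by a direct commutator computation and apply Lemma \ref{lem:abel} with ambient group $\overline{H}$ rather than $L$, which changes nothing of substance.
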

\begin{proof}
We first consider the subgroup $H_1 = H \cap L'$, which is non-trivial by Lemma \ref{lem:abel}. As $L'=\Q_p\rtimes \{1\}$ is isomorphic to $\Q_p$, Lemma \ref{lem:cyc} implies that $H_1$ is essential in $L'$.

Now let $N$ be a non-trivial closed normal subgroup of $\overline{H}$, and we have to prove that $N \cap H$ is non-trivial.

Since $\overline{H}$ is non-abelian, its center is trivial by Lemma \ref{lem:trabel}, so $\overline{H}'$ is essential in $\overline H$ by Lemma \ref{lem:tre}. Then $N\cap \overline{H}'$ is non-trivial, and, in particular, $N_1 = N \cap L'$ is non-trivial.  Obviously, $N$ is closed in $L$, so $N_1$ is a closed subgroup of $L'$. The essentiality of $H_1$ in $L'$ gives that $N_1 \cap H_1$ is non-trivial, so  also $N \cap H$ is non-trivial.
\end{proof}
	
\begin{corol}\label{thm:dense}
Let $H$ be a non-abelian subgroup of $L$. Then:
	\ben \item $H$ is locally minimal;
	\item $H$ is minimal if and only if $\overline{H}$ is minimal.\een
\end{corol}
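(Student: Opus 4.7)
The statement is essentially a packaging of Proposition \ref{nonab:subgr:essent} with the criteria in Fact \ref{Crit}, so my plan is short and direct.

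The starting observation is that $H$ is tautologically dense in its own closure $\overline{H}$, and that $\overline{H}$, being a closed subgroup of the locally compact group $L = (\Q_p,+)\rtimes \Q_p^*$, is itself locally compact. Local compactness implies local minimality (as recalled in the discussion of \cite{MP} in the introduction), so $\overline{H}$ is locally minimal.

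For part (1), I want to apply the Local Minimality Criterion (Fact \ref{Crit}(2)) to the dense inclusion $H \leq \overline{H}$. What needs to be checked is that $H$ is locally essential in $\overline{H}$. But Proposition \ref{nonab:subgr:essent} gives the stronger conclusion that $H$ is essential in $\overline{H}$, and essentiality trivially implies local essentiality (take the witnessing neighborhood $V$ to be all of $\overline{H}$). Combined with the local minimality of $\overline{H}$, Fact \ref{Crit}(2) yields that $H$ is locally minimal.

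For part (2), I invoke the Minimality Criterion (Fact \ref{Crit}(1)) on the same dense inclusion $H \leq \overline{H}$. Since $H$ is essential in $\overline{H}$ by Proposition \ref{nonab:subgr:essent}, the essentiality hypothesis is automatic, and the criterion simplifies to the equivalence: $H$ is minimal if and only if $\overline{H}$ is minimal.

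There is no real obstacle here — both items are one-line consequences of the preceding proposition and the two criteria, once one notes that $\overline{H}$ inherits local compactness (hence local minimality) from $L$. The only mild subtlety worth flagging in the writeup is the implication essential $\Rightarrow$ locally essential, which is why the proof of (1) goes through with the essentiality statement of Proposition \ref{nonab:subgr:essent} alone.
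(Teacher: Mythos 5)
Your proof is correct and follows exactly the paper's own argument: both items are obtained from Proposition \ref{nonab:subgr:essent} combined with the Local Minimality Criterion and the Minimality Criterion of Fact \ref{Crit}, using the local compactness (hence local minimality) of $\overline{H}$. The remark that essentiality implies local essentiality is a fine explicit touch, but the route is the same as in the paper.
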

\begin{proof}
$(1)$: Since $\overline{H}$ is locally compact, and thus locally minimal, we can apply Proposition \ref{nonab:subgr:essent} and the Local Minimality Criterion.

$(2)$: Apply the Minimality Criterion.
\end{proof}

Since $L$ is complete, and compact groups are minimal, we immediately obtain the following consequence of Corollary \ref{thm:dense}(2).
\begin{corol}\label{cor:prec}
If $H$ is a precompact non-abelian subgroup of $L$,
then $H$ is minimal.
\end{corol}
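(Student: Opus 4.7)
The plan is to apply Corollary \ref{thm:dense}(2) to reduce minimality of $H$ to minimality of its closure $\overline{H}$ in $L$, and then to show that under the precompactness hypothesis this closure is actually compact, which makes minimality automatic. Note that Corollary \ref{thm:dense}(2) is available because $H$ is assumed non-abelian.

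First I would record that $L = (\Q_p,+)\rtimes \Q_p^*$ is locally compact, being a semidirect product of two locally compact groups. Consequently $L$ is complete in its two-sided uniformity. Since $H$ is precompact, it is (isomorphic to) a subgroup of a compact group, hence totally bounded as a uniform space; because the uniform structure on $H$ is the one induced from $L$, this total boundedness takes place inside $L$. The closure $\overline{H}$ is therefore a closed (hence complete) and totally bounded subset of the complete group $L$, so $\overline{H}$ is compact.

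Once $\overline{H}$ is compact, it is minimal, and since $H$ is non-abelian, Corollary \ref{thm:dense}(2) yields that $H$ is minimal as well. I do not foresee any real obstacle here: the only thing one needs to be careful about is distinguishing the two notions ``precompact subgroup of $L$'' and ``topological group embeddable in a compact group'', but these coincide precisely because $L$ is complete, so the precompactness hypothesis directly delivers compactness of $\overline{H}$.
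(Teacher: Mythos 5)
Your proposal is correct and follows essentially the same route as the paper: since $L$ is complete, the closure $\overline{H}$ of the precompact subgroup $H$ is compact, hence minimal, and Corollary \ref{thm:dense}(2) (applicable as $H$ is non-abelian) transfers minimality back to $H$. The extra care you take in identifying "precompact" with total boundedness of the induced uniformity is fine and only makes explicit what the paper leaves implicit.
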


Recall that a group $G$ is \emph{perfectly minimal} if $G \times M$ is minimal for every minimal group $M$. The above result should be compared with the following Megrelishvili's theorem, ensuring that $L$ is perfectly minimal.

Let $K$ be a topological division ring. A subset $B$ of $K$ is called \emph{bounded} if for every neighborhood $X$ of $0$ there is a neighborhood $Y$ of $0$ such that $YB \subseteq X$ and $BY \subseteq X$. A subset $V$ of $K$ is \emph{retrobounded} if $0 \in V$ and $(K \setminus V)^{-1}$ is bounded.
Then $K$ is called \emph{locally retrobounded} if it has a local base at $0$ consisting  of \emph{retrobounded} neighborhoods.
For example, $K$ is locally retrobounded if it is: locally compact,
topologized by an absolute value,
or a linearly ordered field.

\begin{fact}\cite[Theorem 4.7(a)]{Meg}\label{thm:retro}
Let $K$ be a non-discrete locally retrobounded division ring.
Then the group $G = K \rtimes K^*$
is perfectly minimal, where the natural action of $K^* $ on $K = (K,+)$ by multiplication is considered.
\end{fact}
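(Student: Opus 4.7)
The plan is to split the proof of perfect minimality of $G = K \rtimes K^*$ into two phases: first establish that $G$ itself is minimal, and then bootstrap to perfect minimality.

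\textbf{Phase 1 (minimality).} A direct commutator computation in the spirit of Lemma \ref{lem:abel} gives $[(a,b),(c,d)] = (a(1-d)+c(b-1),1)$, so $G' = K \rtimes \{1\} \cong (K,+)$. Moreover, the argument of Lemma \ref{lem:trabel} adapts to show that $Z(G)$ is trivial, so by Lemma \ref{lem:tre} the subgroup $G'$ is essential in $G$. Let $\sigma \subseteq \tau$ be a coarser Hausdorff group topology on $G$. In view of the Minimality Criterion (Fact \ref{Crit}) and the essentiality of $G'$, it suffices to show that $\sigma|_{G'} = \tau|_{G'}$; compatibility on the quotient $G/G' \cong K^*$ then follows by transporting neighborhoods through the action. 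The restriction $\sigma|_{G'}$ is a Hausdorff group topology on $(K,+)$ invariant under multiplication by every $b \in K^*$, because conjugation by $(0,b)$ is a $\sigma$-homeomorphism of $G$. This is where local retroboundedness enters: given a retrobounded $\tau$-neighborhood $V$ of $0$, the boundedness of $(K \setminus V)^{-1}$ couples the additive and multiplicative structures tightly enough that no $K^*$-invariant Hausdorff group topology on $(K,+)$ can properly coarsen $\tau|_K$.

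\textbf{Phase 2 (perfect minimality).} For a Hausdorff minimal group $M$, I would prove that $G \times M$ is minimal by taking a coarser Hausdorff group topology $\sigma$ on $G \times M$ and showing it equals the product topology. The centerfreeness of $G$ is crucial: it embeds $G$ faithfully into $\mathrm{Inn}(G) \leq \mathrm{Aut}(G)$, so conjugation by elements of the $G$-factor inside the product can be used to read off the $\sigma$-topology on the $M$-factor from the behaviour on the $G$-factor. Combining this with the Phase 1 rigidity of $G$ under $K^*$-invariant coarsenings, with the essentiality of $G'$, and with the minimality of $M$, one concludes that $\sigma$ is the product topology.

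\textbf{Main obstacle.} The technical heart is Phase 1's use of local retroboundedness. The mere $K^*$-invariance of $\sigma|_{G'}$ is not enough to rule out proper coarsenings (a trivial action would preserve any Hausdorff coarsening on $(K,+)$); one really has to exploit the retrobounded neighborhoods to convert algebraic invariance into topological agreement, using the bound on $(K \setminus V)^{-1}$ as the bridge between inversion and addition in $K$. Phase 2 then follows a fairly standard template for lifting minimality to perfect minimality in centerfree groups, but the transfer across the product for an arbitrary minimal $M$ still requires careful verification.
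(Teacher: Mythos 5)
This statement is not proved in the paper at all: it is quoted as an external result (Megrelishvili, \cite[Theorem 4.7(a)]{Meg}), so your proposal can only be judged on its own merits, and as it stands it is a plan rather than a proof. The decisive step is missing exactly where you yourself locate it: in Phase 1 you assert that local retroboundedness ``couples the additive and multiplicative structures tightly enough'' that no $K^*$-invariant Hausdorff group topology on $(K,+)$ can properly coarsen $\tau|_K$, but no argument is given, and this rigidity statement is the entire analytic content of Megrelishvili's theorem. Your reduction is also misapplied: Fact \ref{Crit} (the Minimality Criterion) concerns \emph{dense} subgroups, whereas $G'=K\rtimes\{1\}$ is closed, so essentiality of $G'$ does not by itself reduce minimality of $(G,\tau)$ to the equality $\sigma|_{G'}=\tau|_{G'}$. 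A Merson-type argument would require agreement of $\sigma$ and $\tau$ both on $G'$ and on the quotient $G/G'\cong K^*$; since $K^*$ is in general far from minimal (for $K=\Q_p$ one has $\Q_p^*\cong\Z\times\Z_p^*$, with a discrete direct factor $\Z$, and recall Fact \ref{Steph:Thm}), the agreement of the quotient topologies is a genuine difficulty and does not ``follow by transporting neighborhoods through the action''; recovering the topology of the acting group $K^*$ from that of $K$ is the other half of the work, and none of the standard shortcuts in the paper (Fact \ref{fac:EDS} needs a compact normal subgroup, the criteria in Fact \ref{Crit} need density) applies here.

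Phase 2 is likewise a restatement rather than a proof: once $G$ is known to be minimal and center-free, perfect minimality is exactly Megrelishvili's theorem that products of center-free minimal groups are perfectly minimal, which this paper invokes as \cite[Theorem 1.15]{Meg95} in the proof of Corollary \ref{prod:nonAB:subgr:ofL}; citing that result would make Phase 2 legitimate, but your conjugation sketch does not carry it out. On the positive side, your preliminary computations are sound: in the commutative case $[(a,b),(c,d)]=(a(1-d)+c(b-1),1)$, so $G'=K\rtimes\{1\}$, conjugation by $(0,b)$ acts on $G'$ as multiplication by $b$, and $Z(G)$ is trivial as in Lemma \ref{lem:trabel}, so $G'$ is essential by Lemma \ref{lem:tre} (for a non-commutative division ring the commutator formula needs adjusting, but the conclusions persist). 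These observations set up the problem correctly; what is missing is precisely the proof.
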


As $K = \Q_p$ is locally retrobounded, Fact \ref{thm:retro} entails that $L$ is perfectly minimal. By Lemma \ref{lem:trabel}, the non-abelian subgroups of $L$ are center-free, so we can apply the above results to obtain the following corollary.

\begin{corol}\label{prod:nonAB:subgr:ofL}
For every $i\in I$, let $H_i$ be a non-abelian subgroup of $L$ that is either dense or precompact. Then, the product $\displaystyle{\prod}_{i\in I} H_i$ is perfectly minimal.
\end{corol}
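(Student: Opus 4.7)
The plan is to reduce the statement to a standard product theorem for minimal topological groups, applied to the family $\{H_i\}_{i\in I}$. Two preliminary observations about each individual $H_i$ need to be verified before the product theorem can be invoked.

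First, I would show that every $H_i$ is minimal. If $H_i$ is precompact, this is precisely Corollary \ref{cor:prec}. If instead $H_i$ is dense in $L$, then $\overline{H_i}=L$, which is (perfectly) minimal by Megrelishvili's theorem (Fact \ref{thm:retro}); Corollary \ref{thm:dense}(2) then lifts minimality from $L$ down to $H_i$. Either way, each $H_i$ is minimal.

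Second, since each $H_i$ is non-abelian, Lemma \ref{lem:trabel} gives that $Z(H_i)$ is trivial. Hence $\{H_i\}_{i\in I}$ is a family of center-free minimal Hausdorff topological groups.

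To conclude, I would invoke the classical product theorem that an arbitrary product of center-free minimal Hausdorff topological groups is perfectly minimal (see, e.g., \cite{DPS}). Applied to $\{H_i\}_{i\in I}$, this yields that $\prod_{i\in I} H_i$ is perfectly minimal, as required. The principal obstacle is the availability of this last product result in the required generality: the center-free hypothesis is essential, since products of minimal groups with non-trivial centers may fail to be minimal even in the finite case. Once the two preliminary observations above are established, however, no further technical work is needed and the corollary drops out at once.
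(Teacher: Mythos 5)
Your proposal is correct and follows essentially the same route as the paper: minimality of each $H_i$ via Corollary \ref{cor:prec} (precompact case) or Fact \ref{thm:retro} together with Corollary \ref{thm:dense}(2) (dense case), center-freeness via Lemma \ref{lem:trabel}, and then the theorem that arbitrary products of center-free minimal groups are perfectly minimal. The only difference is bibliographic: the paper cites this last product theorem as \cite[Theorem 1.15]{Meg95} rather than \cite{DPS}.
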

\begin{proof}
If $H_i$ is precompact,
then it is minimal by Corollary \ref{cor:prec}. If $H_i$ is dense in $L$, then it is minimal by Fact \ref{thm:retro} and Corollary \ref{thm:dense}(2). Thus, $H_i$ is minimal  for every $i\in I.$ These subgroups are center-free  according to Lemma \ref{lem:trabel}. As the arbitrary product of center-free minimal groups is perfectly minimal by \cite[Theorem 1.15]{Meg95}, we conclude that $\displaystyle{\prod}_{i\in I} H_i$ is perfectly minimal.
\end{proof}

\subsection{Abelian subgroups of $L$} \label{sub:main}

\begin{lemma}\label{last:lemma}
An infinite compact subgroup $C$ of $G = \Q_p^*$ contains an open subgroup of $G$ isomorphic to $\Z_p$.
\end{lemma}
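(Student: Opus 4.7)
The plan is to exploit the well-known topological decomposition
\[
\Q_p^* \;\cong\; \langle p\rangle \times \Z_p^* \;\cong\; \Z \times C_p \times F_p,
\]
where $\langle p\rangle \cong \Z$ is discrete (the valuation part), and $\Z_p^* = C_p F_p$ with $C_p \cong (\Z_p,+)$ an open subgroup of finite index $|F_p|$ in $\Z_p^*$.

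First, I would use compactness to push $C$ into the unit group. The projection $\pi : \Q_p^* \to \langle p \rangle \cong \Z$ is a continuous homomorphism onto a discrete group, so $\pi(C)$ is a compact subset of $\Z$, hence finite, hence trivial (as a subgroup of $\Z$). Therefore $C \leq \ker\pi = \Z_p^*$.

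Next, I would cut down to the pro-$p$ part. Since $C_p$ is an open subgroup of finite index in $\Z_p^*$, the intersection $D := C \cap C_p$ has finite index in $C$ and so remains infinite. Being closed in $\Q_p^*$, the subgroup $D$ is a closed infinite subgroup of $C_p \cong \Z_p$. The classification of closed subgroups of $\Z_p$ then forces $D$ to be of the form $p^n \Z_p$ for some $n \in \N$, which is simultaneously open in $\Z_p$ and isomorphic to $\Z_p$ as a topological group.

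Finally, since $D$ is open in $C_p$ and $C_p$ is open in $\Q_p^*$, we conclude that $D$ is open in $\Q_p^*$. Thus $D \leq C$ is the desired open subgroup of $G = \Q_p^*$ topologically isomorphic to $\Z_p$. The only mildly delicate point is the initial reduction $C \leq \Z_p^*$; once that is in place, the classification of closed subgroups of $\Z_p$ finishes the argument without any calculation.
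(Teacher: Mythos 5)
Your proof is correct and follows essentially the same route as the paper: both use the decomposition $\Q_p^* \cong \Z \times F_p \times \Z_p$ (with $\Z$ discrete) and reduce to a non-trivial closed subgroup of the $\Z_p$-factor, which is necessarily of the form $p^n\Z_p$, hence open and topologically isomorphic to $\Z_p$. The only cosmetic difference is that you remove the discrete obstruction in two steps (first the valuation projection, then the finite-index open subgroup $C_p$ of $\Z_p^*$), whereas the paper obtains $C \cap U \neq \{e\}$ in one stroke by noting that otherwise $C$ would inject into the discrete quotient $G/U$.
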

\begin{proof}
Note that $G \cong \Z \times \Z_p^*$, where $\Z$ is equipped with the discrete topology, so  $G\cong \Z  \times F_p \times   \mathbb Z_p$, and we identify these two groups.

Being infinite compact, $C$ non-trivially meets the open subgroup $U = \{0\} \times \{0\} \times \mathbb Z_p$ of $G$, otherwise the projection of $C$ in the discrete quotient group $G/U$ would be compact infinite.
So consider the non-trivial subgroup $O = C\cap U$ of $C$.
As $O$ is a closed subgroup of $U \cong \Z_p$,
it is isomorphic to $\Z_p$ itself and it has finite index in $U$, so it is also open in $U$. As $U$ is open in $G$, we conclude that $O$ is open in $G$.
\end{proof}

Now we are in position to prove
Theorem A.

\medskip
\noindent{\bf Proof of Theorem A.}
If $H$ is a non-abelian subgroup of $L$, then Corollary \ref{thm:dense}(1) applies.
Let $H$ be an abelian subgroup of $L$, and consider the following two possibilities:

\smallskip

Case 1:  there is  $e \neq h\in H \cap L' $, then obviously $H \leq C_L(h)$.
Since $L' = (\Q_p,+) \rtimes \{1\}$,
 we have $C_L(h) = L'$. It follows that $H \leq L'\cong \Q_p$. Then $H$ is locally minimal by Fact \ref{fac:ext}.

\smallskip

Case 2:  the subgroup $H \cap L' $ is trivial, so the projection $L \to L/L'$ restricted to $H$ gives a continuous group isomorphism $q: H \to q(H)$.

It is not restrictive to assume $H$ to be non-discrete, and if we prove that the closure of $H$ in $L$ has an open subgroup isomorphic to $\Z_p$, it will follow by Fact \ref{fac:ext} that $H$ is locally minimal; so we can assume $H$ to be closed in $L$.

Then $H$ is a non-discrete locally compact and totally disconnected group, so $H$ contains an infinite compact open subgroup $K$ by van Dantzig's theorem \cite{vD}, and the restriction $q\restriction_{ K }: K \to q(K)$ is a closed map, hence a topological group isomorphism. The infinite compact subgroup $q(K)$ of $q(H) \leq L/L' \cong \Q_p^*$
contains an open subgroup $O$ isomorphic to $\Z_p$ by Lemma \ref{last:lemma}.
Obviously, also $q\restriction_{ K }^{\phantom{K}-1} (O)$ is isomorphic to $\Z_p$, and it is open in $K$, hence in $H$. \qed

\section{Locally compact $\hm$ groups}\label{lcHMSection}

In this section we consider hereditarily minimal locally compact groups $G$. We start with the following immediate consequence of Corollary \ref{cor:spr}.

\begin{lemma}\label{Prod+Steph:lemma}
Let $G$ be a hereditarily minimal locally compact group, and let $A$ be a closed abelian subgroup of $G$. Then either $A$ is finite, or $A \cong \Z_p$ for some prime $p$.

\end{lemma}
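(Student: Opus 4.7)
The statement should follow almost immediately from Corollary \ref{cor:spr}, combined with the obvious fact that hereditary minimality passes to all subgroups (closed or not).

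The plan is as follows. First I would note that being a subgroup of the \HM\ group $G$, the group $A$ is itself \HM: every subgroup $B\leq A$ is a subgroup of $G$ and hence minimal by hypothesis. Next, since $A$ is a closed subgroup of the locally compact group $G$, it is locally compact in its induced topology. Since $A$ is moreover assumed to be abelian, $A$ is an \HM\ locally compact abelian group.

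Now apply the dichotomy on the cardinality of $A$. If $A$ is finite, there is nothing more to prove. If $A$ is infinite, then Corollary \ref{cor:spr} yields $A\cong \Z_p$ for some prime $p$, completing the proof.

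I do not foresee any real obstacle: the content of the lemma is essentially a restatement of Corollary \ref{cor:spr} for closed abelian subgroups of an \HM\ locally compact group, and the only points to verify are that hereditary minimality descends to subgroups (trivial from the definition) and that closed subgroups of locally compact groups are locally compact (standard).
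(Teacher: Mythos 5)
Your proposal is correct and matches the paper's treatment: the lemma is stated there precisely as an immediate consequence of Corollary \ref{cor:spr}, using that hereditary minimality passes to subgroups and that a closed subgroup of a locally compact group is locally compact. Nothing further is needed.
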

 In particular, if $G$ is not a torsion group, then it contains a copy of some $\Z_p$,
so $G$ is not discrete.

Now we give some results on the discrete \HM\ groups.
\subsection{Discrete $\hm$ groups}\label{discreteHM}
Clearly, every locally finite group is torsion. In the following result we recall that the converse holds true for locally solvable groups.

\begin{fact}\cite[Proposition 1.1.5]{DIX}\label{fac:lslf}
	Every torsion locally solvable group is locally finite.
\end{fact}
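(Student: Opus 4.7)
My plan is to reduce the claim to a statement about finitely generated groups and then proceed by induction on the derived length. Recall that by definition, a group is locally solvable if every finitely generated subgroup is solvable, and it is locally finite if every finitely generated subgroup is finite. So let $G$ be a torsion locally solvable group, and let $H \leq G$ be an arbitrary finitely generated subgroup; then $H$ is solvable and torsion, and it suffices to show that $H$ is finite. Thus the whole statement reduces to the following: \emph{every finitely generated solvable torsion group is finite.}

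I will prove this by induction on the derived length $n$ of $H$. For the base case $n \leq 1$ the group $H$ is abelian, finitely generated and torsion, and the structure theorem for finitely generated abelian groups (or a direct argument, since such an $H$ is a quotient of $\Z^k$ all of whose generators have finite order) immediately gives that $H$ is finite.

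For the inductive step, suppose the claim holds for solvable groups of derived length $n$, and let $H$ have derived length $n+1$. Then $H/H'$ is a finitely generated abelian torsion group, hence finite by the base case. In particular, $H'$ has finite index in $H$. Here I will invoke the classical fact that a subgroup of finite index in a finitely generated group is itself finitely generated (a consequence of the Reidemeister--Schreier / Nielsen--Schreier method, or more elementarily of the coset table argument). Thus $H'$ is finitely generated, it is torsion (being a subgroup of $H$), and it is solvable of derived length $n$, so by the inductive hypothesis $H'$ is finite. Since $H$ is an extension of the finite group $H/H'$ by the finite group $H'$, the group $H$ is finite, completing the induction.

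The only non-trivial ingredient is the fact that a finite-index subgroup of a finitely generated group is finitely generated; everything else is a clean induction together with the elementary observation about finitely generated abelian torsion groups. This is the step where one truly uses that one is working with abstract groups (no topology), and it is essentially the only place where one needs something beyond the definitions.
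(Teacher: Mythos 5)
Your proof is correct: the reduction to finitely generated subgroups and the induction on derived length (finitely generated abelian torsion groups are finite; $H/H'$ finite, hence $H'$ is finitely generated by Schreier's lemma, so finite by induction) is exactly the standard argument behind this statement. Note that the paper does not prove this fact at all --- it is quoted from Dixon's book \cite[Proposition 1.1.5]{DIX} --- so your write-up simply supplies the usual textbook proof of the cited result.
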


The next fact guarantees the existence of an infinite abelian group inside every infinite locally finite group.
\begin{fact}\cite{HK}\label{fac:lfia}
	Every infinite locally finite group contains an infinite abelian subgroup.
\end{fact}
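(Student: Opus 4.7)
This is the classical Hall--Kulatilaka theorem, and since its proof lies deeper than the self-contained material in the paper, I will sketch a strategy rather than give full details. The plan is to argue by contradiction: assume $G$ is an infinite locally finite group in which every abelian subgroup is finite, and derive a contradiction. The argument splits into two main stages: a reduction to a locally finite $p$-group, and then a Kargapolov-type analysis inside it.

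\emph{Stage 1: reduction to a locally finite $p$-group.} Using Sylow theory for locally finite groups (due to P.~Hall), I would show that some maximal $p$-subgroup $P\le G$ is infinite for some prime $p$. Suppose to the contrary that every maximal $p$-subgroup of $G$ is finite (the ``Sylow-finite'' case). Then Dietzmann's lemma (any conjugation-closed finite set of torsion elements generates a finite subgroup), combined with a direct-sum decomposition across the distinct prime divisors of the orders appearing in a cofinal chain of finite subgroups of $G$, forces $G$ to contain an infinite abelian subgroup of the form $\bigoplus_p A_p$ with each $A_p$ a finite abelian $p$-group, contradicting the standing hypothesis. Hence some maximal $p$-subgroup $P$ of $G$ is infinite, and the problem reduces to proving that every infinite locally finite $p$-group contains an infinite abelian subgroup.

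\emph{Stage 2: Kargapolov's theorem for $p$-groups.} For the infinite locally finite $p$-group $P$, I would proceed by a case analysis on the FC-centre $F(P)$, the set of elements of $P$ with finite conjugacy class. If $F(P)$ is non-trivial, Dietzmann's lemma provides a finite non-trivial normal subgroup $N\trianglelefteq P$; one then iterates the construction inside $P/N$ (again an infinite locally finite $p$-group) and lifts centres back through $N$ to assemble an ascending chain of finite abelian subgroups whose union is infinite abelian. If $F(P)$ is trivial, one works directly with a chain of finite $p$-subgroups $F_1\le F_2\le\dots$ exhausting $P$: the class equation forces each $Z(F_i)$ to be a non-trivial $p$-group, and a diagonal selection of central elements (controlled by the conjugation action of each $F_j$ on the centres of the preceding $F_i$) produces an infinite abelian subgroup.

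\emph{Main obstacle.} The principal difficulty is that neither a chain-of-centralizers nor a chain-of-normalizers argument works uniformly: pathological examples such as McLain's characteristically simple locally finite $p$-groups have trivial centre, so no single inductive construction starting from a central element suffices. The FC-dichotomy above is essential, and in each branch delicate combinatorial work (repeated use of Dietzmann's lemma, together with the Hall Sylow theory) is needed to extract the infinite abelian subgroup. These tools go beyond the scope of the paper, which is why the result is quoted from \cite{HK} rather than proved in the text.
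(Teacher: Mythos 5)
The paper offers no proof of this statement at all: it is quoted as an external result of Hall and Kulatilaka \cite{HK}, so there is no internal argument to compare yours with. Judged on its own terms, your proposal is (as you say) only a strategy sketch, and the sketch has a genuine gap exactly where the theorem is hard. In Stage 1 you dispose of the ``Sylow-finite'' case --- every maximal $p$-subgroup of $G$ finite --- by invoking Dietzmann's lemma and a direct-sum decomposition across the primes occurring in a cofinal chain of finite subgroups. That case is precisely the core of the Hall--Kulatilaka theorem, and it does not yield to such elementary means: under the standing hypothesis that all abelian subgroups are finite, there is no reason for $G$ to have any non-trivial finite conjugacy classes, so Dietzmann's lemma has nothing to bite on, and the finite subgroups in a chain need not decompose compatibly into $p$-components. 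All known proofs (Hall--Kulatilaka, and independently Kargapolov) go through the Feit--Thompson odd-order theorem: one reduces to a configuration in which a finite non-abelian simple section must have even order, hence contains involutions, and the argument then proceeds via centralizers of involutions. Your sketch never mentions this ingredient, and without it the plan cannot be completed; presenting the reduction as a consequence of Sylow theory for locally finite groups plus Dietzmann's lemma misrepresents where the depth lies.

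Stage 2 is more reasonable in outline: the statement that an infinite locally finite $p$-group has an infinite abelian subgroup is indeed a separate, more elementary lemma that predates \cite{HK} and does not need Feit--Thompson. Still, your treatment of the branch where the FC-centre is trivial (``class equation plus diagonal selection of central elements'') is too vague to check, and the McLain-type examples you yourself cite indicate that a naive diagonal choice of elements from the centres $Z(F_i)$ need not cohere into an infinite abelian subgroup without substantial additional bookkeeping. As it stands, the proposal is a plausible table of contents for part of the literature, not a proof, and its central reduction is unsupported.
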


From Fact \ref{fac:lslf} and Fact \ref{fac:lfia}   we obtain the following corollary.
\begin{corol}\label{cor:lsia}
	An infinite locally solvable group contains an infinite abelian subgroup.
\end{corol}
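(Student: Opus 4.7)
The plan is to split on whether $G$ is a torsion group, and then invoke the two facts just recalled.

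First, suppose $G$ is not a torsion group. Then there exists $x\in G$ of infinite order, and the cyclic subgroup $\langle x\rangle\cong\Z$ is an infinite abelian subgroup of $G$, which settles this case.

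Suppose instead that $G$ is a torsion group. Since $G$ is locally solvable by hypothesis, Fact \ref{fac:lslf} applies and yields that $G$ is locally finite. Being infinite and locally finite, $G$ contains an infinite abelian subgroup by Fact \ref{fac:lfia}, completing the proof.

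There is no real obstacle here: the only thing to notice is that the dichotomy torsion/non-torsion reduces the problem to the two cited facts, since in the non-torsion case an infinite cyclic (hence abelian) subgroup is immediate, and in the torsion case local solvability forces local finiteness, after which Hall--Kulatilaka (Fact \ref{fac:lfia}) finishes the job.
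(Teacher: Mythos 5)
Your proof is correct and follows exactly the route the paper intends: the paper states the corollary as an immediate consequence of Fact \ref{fac:lslf} and Fact \ref{fac:lfia}, with the implicit torsion/non-torsion dichotomy you spell out (an element of infinite order giving an infinite cyclic subgroup in the non-torsion case). Nothing is missing.
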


The next result will be used in the sequel to conclude that an infinite locally solvable \HM\ group is not discrete.
\begin{lemma}\label{lem:tnlf}
If $G$ is an infinite discrete \HM\ group, then the abelian subgroups of $G$ are finite.

In particular, the center of $G$ is finite, $G$ is torsion but it is neither locally finite nor locally solvable.
\end{lemma}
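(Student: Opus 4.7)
The plan is to first establish the main claim, that every abelian subgroup $A$ of $G$ is finite, and then derive the three ``in particular'' assertions from it as immediate consequences.

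For the main claim, I would observe that $A$, as a subgroup of the discrete group $G$, is itself discrete and hence locally compact. Since $G$ is hereditarily minimal, $A$ is also minimal. Applying Stephenson's theorem (Fact \ref{Steph:Thm}) to the locally compact abelian minimal group $A$ yields that $A$ is compact. A compact discrete group is finite, so $A$ is finite. (Alternatively, one can simply invoke Lemma \ref{Prod+Steph:lemma}: every subgroup of the discrete group $G$ is automatically closed, so $A$ is either finite or isomorphic to $\Z_p$; the latter is ruled out because $\Z_p$ is not discrete.)

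Given that all abelian subgroups of $G$ are finite, the three remaining assertions follow quickly. The center $Z(G)$ is an abelian subgroup, hence finite. For any $g\in G$, the cyclic subgroup $\langle g\rangle$ is abelian, therefore finite, so $g$ has finite order; this shows $G$ is torsion. If $G$ were locally finite, then by Fact \ref{fac:lfia} (applied to the infinite group $G$) it would contain an infinite abelian subgroup, contradicting the main claim; so $G$ is not locally finite. Finally, if $G$ were locally solvable, then being torsion, Fact \ref{fac:lslf} would force $G$ to be locally finite, which we just excluded (equivalently, one could directly cite Corollary \ref{cor:lsia} to produce an infinite abelian subgroup of $G$, again contradicting the main claim).

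There is no real obstacle here: the whole argument is a short sequence of reductions to already established results (Fact \ref{Steph:Thm}, Lemma \ref{Prod+Steph:lemma}, Fact \ref{fac:lslf}, Fact \ref{fac:lfia} and Corollary \ref{cor:lsia}). The only point deserving a line of care is the very first one, namely that an abelian subgroup of a discrete HM group inherits both local compactness and minimality, so that Stephenson's theorem is applicable.
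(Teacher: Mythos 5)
Your proof is correct and follows essentially the same route as the paper: the paper deduces the main claim directly from Lemma \ref{Prod+Steph:lemma} (which you give as your alternative argument, noting that subgroups of a discrete group are closed and $\Z_p$ is non-discrete), and then derives the remaining assertions from Fact \ref{fac:lfia} and Fact \ref{fac:lslf} exactly as you do. Your primary variant, applying Fact \ref{Steph:Thm} directly so that a minimal discrete abelian subgroup is compact and hence finite, is just a slightly more elementary unpacking of the same idea and is equally valid.
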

\begin{proof}
By Lemma \ref{Prod+Steph:lemma}, $G$ has no infinite abelian subgroups, so the center of $G$ is finite.

In particular $G$ is torsion, but it is not locally finite by Fact \ref{fac:lfia}. Finally, $G$ is not locally solvable by Fact \ref{fac:lslf}.
\end{proof}

So a discrete HM group is torsion by Lemma \ref{lem:tnlf}. This result should be compared with Proposition \ref{prop:ndhmlc}(2), where we prove that non-discrete locally compact HM groups are not torsion, as they contain a copy of the $p$-adic integers $\Z_p$ for some $p$.

The minimality of infinite discrete groups has been a long standing open question of Markov. Since  minimal discrete groups admit no non-discrete Hausdorff group topologies at all, such groups are also called {\em non-topologizable}.
The first example of a non-topologizable group was provided by Shelah \cite{Sh} under the assumption of the Continuum Hypothesis CH. His example is simple and torsionfree, so that discrete group is also totally minimal, yet not hereditarily minimal. A countable example of a non-topologizable group was built a bit later by
 Ol$'$shanskij  \cite{O} (it was an appropriate quotient of Adjan's group $A(m,n)$ built for the solution of Burnside problem).
	
	There exist infinite \HM\ discrete groups and this fact, recently established by \cite{KOO} is related to another interesting topic,
	namely  {\em categorically compact groups}.  According to  \cite{DU} a Hausdorff topological group $G$ is {\em categorically compact} (briefly, $C$-compact) if for every topological group $H$ the projection $p: G \times H \to H$ sends closed subgroup of $G \times H $ to closed subgroups of $H$.
	Compact groups are obviously $C$-compact by Kuratowski closed projection theorem,
	while solvable $C$-compact groups can be shown to be compact \cite[Corollary 3.5]{DU}. 	
	The  $C$-compact groups are two-sided complete
	and the class of $C$-compact groups has a number of nice properties typical for the compact groups: stability under taking closed subgroups, finite products and
	Hausdorff quotient groups. Moreover, the $\omega$-narrow (in particular, separable) $C$-compact groups are totally minimal \cite[Corollary 3.5]{DU}.
	The question of whether all $C$-compact groups are compact, raised in \cite{DU}, remained open for some time even in the case of locally compact groups
	(the connected $C$-compact locally compact groups are compact \cite[ Proposition 5.1]{DU}). 	
	It was proved in \cite[Theorem 5.5]{DU}, that a countable discrete group $G$ is $C$-compact if and only if every subgroup of $G$ is totally minimal
	(i.e., non-topologizable along with all its quotients).
	This gives rise to the following notion (the specific term was proposed later in \cite{LG2}).
	
\begin{defi}
A group $G$ is {\it hereditarily non-topologizable} when all subgroups of $G$ are non-topologizable along with all their quotients (i.e., totally minimal in the discrete topology).
\end{defi}

It was shown in \cite[Corollary 5.4]{DU} that  every discrete hereditarily non-topologizable group is $C$-compact, yet the existence of infinite hereditarily non-topologizable groups remained open until the recent paper \cite{KOO} which provided many examples of such groups (hence, discrete $C$-compact that obviously fail to be compact). These examples  have various additional properties displaying various levels of being infinite (infinite exponent, non-finitely generated, uncountable, etc.).

\subsection{Proof of Theorem B}\label{subsection:proofB}

The first item of the next result is taken from \cite[Lemma 2.8]{DHXX}.
\begin{lemma}\label{lem:HLM}
	Let $G$ be a topological group, and $N$ be an open subgroup of $G$.
	\ben
	\item
	If $N$ is locally minimal, then $G$ is locally minimal.
	\item If $N$ is \HLM, then $G$  is \HLM.\een
\end{lemma}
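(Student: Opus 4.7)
The plan is to establish (1) by a direct translation argument and then deduce (2) as an immediate corollary of (1) applied to subgroups.

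For (1), I would let $V$ be a neighborhood of $e$ in $N$ that witnesses the local minimality of $N$. Since $N$ is open in $(G,\tau)$, $V$ is also a $\tau$-neighborhood of $e$ in $G$. Now suppose $\sigma \subseteq \tau$ is a coarser Hausdorff group topology on $G$ for which $V \in \sigma$. The key preliminary observation is that $N$ itself must be $\sigma$-open: indeed, $V$ is a $\sigma$-neighborhood of $e$ contained in $N$, so for each $g \in N$ the translate $gV$ is $\sigma$-open and contained in $N$, whence $N = \bigcup_{g\in N} gV$ is $\sigma$-open. Once we know this, the restriction $\sigma\!\restriction_N$ is a Hausdorff group topology on $N$ with $V\in \sigma\!\restriction_N$ and $\sigma\!\restriction_N \subseteq \tau\!\restriction_N$; local minimality of $N$ forces $\sigma\!\restriction_N = \tau\!\restriction_N$. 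Since $N$ is open in both $(G,\sigma)$ and $(G,\tau)$, a neighborhood base of $e$ in each ambient topology is given by the respective neighborhood base of $e$ in $N$, and these coincide. Hence $\sigma = \tau$, which means $G$ is locally minimal with $V$ as witness.

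For (2), I would take an arbitrary subgroup $H \leq G$ and consider $H \cap N$. Since $N$ is open in $G$, the subgroup $H \cap N$ is open in $H$. On the other hand, $H \cap N$ is a subgroup of $N$, and $N$ is \HLM, so $H \cap N$ is locally minimal. Applying part (1) to $H$ with the open subgroup $H \cap N$ yields that $H$ is locally minimal. As $H$ was arbitrary, $G$ is \HLM.

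The only step that requires care is verifying that $N$ remains open when we pass from $\tau$ to the coarser topology $\sigma$; once that is in place, everything else is routine. I do not expect any serious obstacle, since the argument is essentially the standard open-subgroup lifting used throughout the minimality literature, and indeed part (1) is already referenced as \cite[Lemma 2.8]{DHXX}.
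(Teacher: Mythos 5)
Your proof of item (2) is exactly the paper's: for $H\leq G$ the subgroup $H\cap N$ is open in $H$ and locally minimal, and item (1) applies. For item (1) the paper gives no argument at all, simply citing \cite[Lemma 2.8]{DHXX}, so your translation argument is a correct self-contained substitute; the only cosmetic slip is that $gV$ need not itself be $\sigma$-open, but it is a $\sigma$-neighbourhood of $g$ contained in $N$, so $N$ is a $\sigma$-neighbourhood of each of its points and hence $\sigma$-open, after which your identification of the $\sigma$- and $\tau$-neighbourhood filters at $e$ via $\sigma\restriction_N=\tau\restriction_N$ goes through.
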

\begin{proof}
	$(2)$: Let $H$ be a  subgroup of  $G.$ By our assumption on $N$, the locally minimal subgroup $H\cap N$ is an open in $H$. It follows from item (1) that $H$ is locally minimal.
\end{proof}

The following two deep results are due to Zelmanov and Kaplansky, respectively. Note that  the second part of (a) makes use of Fact \ref{fac:lfia}.
\begin{fact}
\begin{itemize}
\item [(a)]\label{fac:zel} \cite[Theorem 2]{Z}
Every
compact torsion group is locally finite. In particular, every infinite compact group contains an infinite compact abelian subgroup.
\item [(b)]\label{fac:kap} \cite[Theorem 6]{K}
Every non-discrete Lie group contains a non-trivial continuous homomorphic image of $\R$.
\end{itemize}
\end{fact}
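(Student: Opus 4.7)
My plan is to treat parts (a) and (b) as essentially independent, since they are classical results of rather different flavors -- a very deep group-theoretic statement (Zelmanov) and a soft Lie-theoretic statement (Kaplansky). For (b) I would give a direct elementary argument, whereas for (a) I would give the standard reduction to the restricted Burnside problem and then invoke Zelmanov's solution as a black box, together with Fact \ref{fac:lfia} for the ``in particular'' clause.

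For part (a), given a compact torsion group $G$, I would first observe that $G = \bigcup_{n \in \N_+} G_n$, where $G_n = \{x \in G : x^n = e\}$ is a closed subset by continuity of the $n$-th power map. The Baire category theorem forces some $G_n$ to have non-empty interior in $G$, and compactness, together with a standard translation/covering argument, upgrades this to $G$ having bounded exponent (a divisor of some $n$). Every finitely generated subgroup $H \leq G$ is then a finitely generated group of bounded exponent which is moreover residually finite, since it embeds into the profinite group $G$ (recall that a compact torsion group is automatically totally disconnected: any non-trivial connected component would contain a non-trivial one-parameter subgroup, contradicting torsion). Zelmanov's positive solution of the restricted Burnside problem then forces $H$ to be finite, giving local finiteness. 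For the ``in particular'' sentence, if $K$ is an infinite compact group, I would first locate an infinite torsion closed subgroup: if $K$ itself is not torsion, then for any non-torsion $x \in K$ the closure $\overline{\langle x\rangle}$ is already an infinite compact abelian subgroup and there is nothing more to prove; otherwise $K$ is torsion, hence locally finite by the preceding step, and an application of the Hall--Kulatilaka theorem (Fact \ref{fac:lfia}) to $K$ produces the desired infinite abelian subgroup.

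For part (b), the argument is pure Lie theory. A non-discrete Lie group $G$ has strictly positive dimension, so its Lie algebra $\mathfrak{g}$ is non-trivial. Picking any non-zero $X \in \mathfrak{g}$, the one-parameter subgroup $\varphi : \R \to G$, $t \mapsto \exp(tX)$, is a continuous homomorphism whose derivative at $0$ is $X \neq 0$, so $\varphi$ is not identically trivial. Its kernel is a closed subgroup of $\R$, hence either trivial or of the form $a\Z$ for some $a>0$, so the image $\varphi(\R)$ is a non-trivial continuous homomorphic image of $\R$ (isomorphic to $\R$ in the first case, to the circle $\T$ in the second).

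The main obstacle is clearly the invocation of Zelmanov's theorem in (a): the step from ``finitely generated, residually finite, bounded exponent'' to ``finite'' \emph{is} the positive solution of the restricted Burnside problem, and any self-contained treatment would be prohibitive in length, relying on Lie algebra methods, the Jennings--Lazard correspondence, and deep work on graded Lie algebras of positive characteristic. In practice one simply cites it, as the authors do. Part (b), by contrast, is elementary once the exponential map and the classification of closed subgroups of $\R$ are in hand.
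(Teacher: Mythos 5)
The paper offers no proof of this Fact: both parts are quoted from the literature, (a) being exactly \cite[Theorem 2]{Z} and (b) exactly \cite[Theorem 6]{K}, with the sole remark that the ``in particular'' clause of (a) also uses Fact \ref{fac:lfia}. Your proposal is correct in outline and, for the ``in particular'' clause and for (b), follows essentially the same path (Hall--Kulatilaka in the torsion case, the standard one-parameter subgroup argument for Kaplansky's statement). Where you genuinely diverge is in (a): rather than citing Zelmanov's theorem in the form stated (local finiteness of compact torsion groups), you re-derive it from two inputs, finiteness of the exponent plus the positive solution of the restricted Burnside problem. That chain is legitimate, but the step you describe as ``a standard translation/covering argument'' is not routine: Baire category gives a coset of an open subgroup all of whose elements have order dividing $n$, and upgrading this to bounded exponent of the whole group is easy only in the abelian case; in general it is a theorem of Herfort, and the reduction of the restricted Burnside problem from prime-power to arbitrary exponent further uses Hall--Higman and the classification of finite simple groups. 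So your detour quietly imports more unproved material than the paper's direct citation does, without buying anything.

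Two smaller points. In the torsion case of the ``in particular'' clause, Fact \ref{fac:lfia} produces an infinite abelian subgroup that need not be closed; you must pass to its closure (still abelian, and compact) to get the subgroup the statement asserts. And your one-line justification that a compact torsion group is totally disconnected (a non-trivial connected component ``would contain a non-trivial one-parameter subgroup'') itself rests on Peter--Weyl/structure theory of compact connected groups, so it deserves a citation rather than being treated as obvious. Neither point is a fatal gap, but both should be made explicit.
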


The next result is the starting point of our exploration of \HM\ locally compact groups.
Recall that a topological group $G$ is called {\it compactly covered} if each element of $G$ is contained in some compact subgroup.

\begin{prop}\label{prop:ndhmlc}
Let $G$ be a hereditarily minimal locally compact group.
\ben
\item Then $G$ is totally disconnected and compactly covered.
\item  If $G$ is non-discrete, then it contains a copy of $\Z_p$ for some prime $p$ and satisfies ($\mathcal{N}_{fn}$). In particular, $Z(G)$ is trivial or $Z(G) \cong \Z_p$ for some prime $p$.
\een
\end{prop}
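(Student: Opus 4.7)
For part (1), my plan is to handle total disconnectedness and the compactly covered property in turn. The connected component $c(G)$ inherits hereditary minimality from $G$, and in particular it is hereditarily locally minimal, so Fact \ref{fac:ext} applies to the connected locally compact group $c(G)$: either it is a Lie group or it has an open subgroup isomorphic to $\Z_p$. The second alternative is ruled out at once, since an open subgroup of a connected group coincides with the whole group, whereas $\Z_p$ is totally disconnected. If $c(G)$ were a non-trivial Lie group, it would be non-discrete, so Fact \ref{fac:kap}(b) would yield a non-trivial continuous homomorphism $\phi : \R \to c(G)$, and then the closure $\overline{\phi(\R)}$ would be a non-trivial closed connected abelian subgroup of $G$. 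Applying Lemma \ref{Prod+Steph:lemma} to this closed abelian subgroup of the hereditarily minimal locally compact group $G$ forces it to be either finite or $\cong \Z_p$: the first is impossible because a non-trivial connected Hausdorff group is infinite (a finite $T_1$ space is discrete), and the second is impossible because $\Z_p$ is totally disconnected. Hence $c(G) = \{e\}$. The compactly covered property then follows from the same lemma: for each $x \in G$ the closed abelian subgroup $\overline{\langle x \rangle}$ is either finite or $\cong \Z_p$, and in either case it is compact and contains $x$.

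For part (2), combining non-discreteness with van Dantzig's theorem gives a compact open subgroup $U$ of $G$ that must be infinite (a finite open subgroup would make $\{e\}$ open in $G$), and Fact \ref{fac:zel}(a) then produces an infinite compact abelian subgroup $A \leq U$. Since $A$ is closed and abelian, Lemma \ref{Prod+Steph:lemma} forces $A \cong \Z_p$, so $G$ contains a copy of $\Z_p$. To establish $(\mathcal{N}_{fn})$, I assume towards a contradiction that $N$ is a finite non-trivial normal subgroup of $G$, pick $e \neq n \in N$, and fix a subgroup $H \cong \Z_p$ of $G$. The orbit of $n$ under the conjugation action of $H$ lies in the finite set $N$, so the stabilizer $C_H(n)$ has finite index in $H$; being also closed, it is again $\cong \Z_p$. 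Since $\langle n \rangle$ is finite and $\Z_p$ is torsionfree, $\langle n \rangle \cap C_H(n) = \{e\}$, while $\langle n \rangle$ and $C_H(n)$ commute, so the internal product $\langle n \rangle \cdot C_H(n)$ is a compact subgroup of $G$. The multiplication map from the compact group $\langle n \rangle \times C_H(n)$ onto this product is a continuous bijection into a Hausdorff space, hence a topological isomorphism, identifying $\langle n \rangle \cdot C_H(n)$ with $\langle n \rangle \times \Z_p$. This closed abelian subgroup is infinite and not isomorphic to $\Z_p$ because $\langle n \rangle$ is finite non-trivial, contradicting Lemma \ref{Prod+Steph:lemma}. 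Finally, the dichotomy for $Z(G)$ is obtained by applying Lemma \ref{Prod+Steph:lemma} to the closed abelian subgroup $Z(G)$: a finite centre would be a finite normal subgroup, excluded by $(\mathcal{N}_{fn})$ just proved, so $Z(G)$ is either trivial or $\cong \Z_p$.

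The principal obstacle is the total disconnectedness step. One has to invoke Fact \ref{fac:ext} and Fact \ref{fac:kap}(b) in the right order and then argue that the closure of the one-parameter subgroup is simultaneously a closed abelian subgroup (so that Lemma \ref{Prod+Steph:lemma} applies) and remains connected (so as to produce the clash with the total disconnectedness of $\Z_p$). Everything else is bookkeeping via van Dantzig, Zelmanov and Prodanov, together with the direct-product trick used to manufacture a forbidden infinite closed abelian subgroup of $G$ whenever a non-trivial finite normal subgroup would be present.
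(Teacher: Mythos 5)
Your proposal is correct and follows essentially the same route as the paper: Fact \ref{fac:ext} plus Kaplansky and Corollary \ref{cor:spr}/Lemma \ref{Prod+Steph:lemma} to kill $c(G)$, closures of cyclic subgroups for compact coveredness, van Dantzig and Zelmanov plus Prodanov for the copy of $\Z_p$, and a finite-normal-subgroup-times-$\Z_p$ abelian subgroup to contradict Prodanov for ($\mathcal{N}_{fn}$). The only cosmetic difference is that you centralize a single element $n$ (stabilizer of finite index) where the paper takes the kernel of the conjugation action of $A$ on all of $F$; both yield the same forbidden closed subgroup isomorphic to a finite cyclic group times $\Z_p$.
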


\begin{proof} $(1)$ : Assume that the connected component $c(G)$ is non-trivial. As $c(G)$ is a hereditarily minimal connected locally compact group, it is a Lie group by Fact \ref{fac:ext}, and of course it is non-discrete. Using Fact \ref{fac:kap}(b), $c(G) $ contains an infinite abelian  Lie group $K$, which is isomorphic to $\Z_p$ by Corollary \ref{cor:spr}, contradicting the fact that $K$ is a Lie group.
The second assertion follows from Lemma \ref{Prod+Steph:lemma}.

$(2)$: By $(1)$, $G$ is a non-discrete totally disconnected locally compact group. By \cite{vD}, it contains an infinite compact open subgroup $H$. According to Fact \ref{fac:zel}(a),  $H$ contains an infinite \HM \ compact abelian group $A.$ By Fact \ref{TeoP}, $A$ is isomorphic to $\Z_p$ for some prime $p.$

 To prove the second assertion we need to check that $G$ contains no finite normal non-trivial subgroups.
Assume for a contradiction that $G$ has a finite non-trivial normal subgroup $F$.
As $G\geq A\cong \Z_p$, we deduce that
 $A \cap F$ is trivial, and the natural
  action by conjugations $\alpha:A\times F\to F$ has an infinite kernel $M =\ker \alpha$. Being a non-trivial closed subgroup of $A\cong \Z_p$, $M$ is also isomorphic to $\Z_p$. It follows that $F\rtimes_{\alpha}M=F\times M$ is \HM. Let $C$ be a cyclic non-trivial subgroup of $F$. Then $C\times \Z_p$ is a \HM \ compact abelian group, contradicting Fact \ref{TeoP}.
	 For the final assertion, apply Lemma \ref{Prod+Steph:lemma} to the closed normal subgroup $Z(G)$.
\end{proof}

The following example shows that the hypotheses of the above proposition cannot be relaxed.

\begin{example}\label{ex:tarski}
Our first item shows that hereditary minimality cannot be relaxed to hereditary local minimality, while in item (b) we show that an infinite discrete HM group may have finite non-trivial center, so need not satisfy ($\mathcal{N}_{fn}$). So ``non-discrete'' cannot be removed in Proposition \ref{prop:ndhmlc}(2).
\ben[(a)]
\item The Lie group $\R\times \Z(2)$ is certainly hereditarily locally minimal by Fact \ref{fac:ext}, but not totally-disconnected.

\item Let $T$ be a countable discrete hereditarily non-topologizable group (see \S\ref{discreteHM}). By \cite[Theorem 5.5]{DU}, $T$ is $C$-compact. As this class is stable under taking finite products, also the direct product $G = \Z(2) \times T$ is $C$-compact. Since $G$ is also countable and discrete,  \cite[Theorem 5.5]{DU} applies again, and $G$ is hereditarily non-topologizable. Thus, $G$ is a discrete hereditarily minimal group with finite non-trivial center.\een
\end{example}

Proposition \ref{prop:ndhmlc}(2) has the following easy consequence: as $\Z_p \times \Z_q$ is not HM for any pair $p,q$ of primes, direct products of non-discrete locally compact \HM \   groups are never \HM.

Recall that a profinite group is a totally disconnected compact group. The following  proposition can be applied for example to profinite torsionfree groups, as a torsionfree group obviously satisfies ($\mathcal{N}_{fn}$).

\begin{prop}\label{pro:tor}
Let $G$ be an infinite profinite group satisfying ($\mathcal{N}_{fn}$). If $H$ is a locally minimal dense subgroup of $G$, then $H$ is minimal.
\end{prop}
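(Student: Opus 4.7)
The plan is to combine the two criteria from Fact~\ref{Crit} with the special structure of profinite groups. Since $G$ is compact, it is minimal, and by the Minimality Criterion it suffices to show that $H$ is \emph{essential} in $G$. We already know something weaker: $G$ is compact hence locally minimal, $H$ is locally minimal and dense in $G$, so the Local Minimality Criterion gives a neighborhood $V$ of $e$ in $G$ such that $H\cap N\neq\{e\}$ for every non-trivial closed normal subgroup $N$ of $G$ contained in $V$. The task is to upgrade local essentiality to essentiality, and this is where $(\mathcal{N}_{fn})$ enters.

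Here is how I would carry it out. First, exploit that $G$ is profinite: open normal subgroups form a neighborhood base at $e$, so I can pick an open normal subgroup $U\leq G$ with $U\subseteq V$. Now let $N$ be an arbitrary non-trivial closed normal subgroup of $G$; I want to show $H\cap N\neq\{e\}$. Consider the closed normal subgroup $N\cap U$ of $G$, which lies in $V$. There are two cases.

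If $N\cap U\neq\{e\}$, then local essentiality applied to $N\cap U$ yields $H\cap(N\cap U)\neq\{e\}$, and a fortiori $H\cap N\neq\{e\}$. If instead $N\cap U=\{e\}$, then the restriction to $N$ of the quotient map $G\to G/U$ is injective; since $U$ is open in the compact group $G$, the quotient $G/U$ is finite, so $N$ embeds into a finite group and is itself finite. But $N$ is a non-trivial finite normal subgroup of $G$, which contradicts $(\mathcal{N}_{fn})$. Hence only the first case occurs, so $H$ is essential in $G$, and the Minimality Criterion concludes that $H$ is minimal.

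The main (and really only) subtle point is the bridge between the \emph{local} statement provided by the Local Minimality Criterion and the \emph{global} essentiality required by the Minimality Criterion; the profinite structure supplies an open normal $U\subseteq V$, and $(\mathcal{N}_{fn})$ rules out the one pathological way in which a non-trivial closed normal $N$ could evade $V$, namely by intersecting $U$ trivially and thus being finite. No delicate calculation is needed beyond this dichotomy.
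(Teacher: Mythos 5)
Your proof is correct and follows essentially the same route as the paper: both pass from the Local Minimality Criterion to local essentiality, pick an open normal subgroup of the profinite group $G$ inside the witnessing neighborhood, and use ($\mathcal{N}_{fn}$) together with the finiteness of $[N:N\cap U]$ to produce a non-trivial closed normal subgroup inside that neighborhood, yielding essentiality and then minimality via the Minimality Criterion. Your contradiction in the case $N\cap U=\{e\}$ is just the contrapositive of the paper's direct argument that $N\cap K$ is infinite, so the two proofs are the same in substance.
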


\begin{proof}
We prove first that if $H$ is a locally essential subgroup of $G$, then it is  essential in $G$.
	Since $G$ is profinite, there exists a local base  at the identity $\mathcal B$ consisting of compact open normal subgroups. Suppose that $H$ is locally essential in $G$, and let $K\in \mathcal B$ be such that  $M\cap H$ is not trivial for every non-trivial closed normal subgroup $M$ of $G$ contained in $K$.
	
	Let $N$ be a non-trivial closed normal subgroup of $G$, and we will show that $N\cap H$ is not trivial.
	As $N$ is compact, and $M=N\cap K$ is an open subgroup of $N$, the index $[N:M]$ is finite.
	This yields that $M$ is infinite since $N$ is infinite by our assumption on $G$.
	Hence, $M\subseteq K$ is a non-trivial closed normal subgroup of $G.$ This implies that $M\cap H$ is not trivial. Since $M\cap H\subseteq N\cap H$ the latter group is also non-trivial.

 Let $H$ be a locally minimal dense subgroup of
	the compact group $G$. By the  Local Minimality Criterion, $H$ is locally essential in $G$, hence essential by the above argument. Now apply the Minimality Criterion to conclude that $H$ is minimal.
\end{proof}

The compact group $\T$ has plenty of dense non-minimal subgroups and they are all locally minimal by Fact \ref{fac:ext}, so ``profinite'' cannot be relaxed to ``compact'' in the above proposition.

By Proposition \ref{prop:ndhmlc}, the compact HM groups satisfy the assumptions of Proposition \ref{pro:tor}. Using this fact, now we prove Theorem B: a compact group is \HM \ if and only if it is \HLM \ and satisfies ($\mathcal{C}_{fn}$).

\bigskip

\noindent{\bf Proof of Theorem B.}
If $G$ is a \HM \ group, then  it is \HLM. To prove $G$ satisfies ($\mathcal{C}_{fn}$), pick an infinite compact subgroup $H$ of $G$. Clearly, $H$ is also \HM, so
satisfies ($\mathcal{N}_{fn}$) by Proposition \ref{prop:ndhmlc}(2). This means that $G$ satisfies ($\mathcal{C}_{fn}$).

For the converse implication, suppose that $G$ is a \HLM\ compact group satisfying ($\mathcal{C}_{fn}$).

We first show that $G$ is totally disconnected. Assuming the contrary, the connected component $c(G)$ is a non-trivial hereditarily locally minimal, connected, compact group. By Fact \ref{fac:ext}, $c(G)$ is a non-discrete Lie group.
According to Fact \ref{fac:kap}(b), $c(G)$ contains an infinite compact abelian Lie group  $C$. In particular, $C$ contains a copy of $\T$, hence contains torsion elements. So $C$ does not satisfy ($\mathcal{N}_{fn}$), contradicting  our assumption that $G$ satisfies ($\mathcal{C}_{fn}$).

Let $H$ be a subgroup of the profinite group $G$. Without loss of generality we may assume that $H$ is infinite. By our assumption, $\overline H$ is a profinite \HLM\ group satisfying ($\mathcal{N}_{fn}$). By Proposition \ref{pro:tor}, applied to $\overline H$ and its subgroup $H$, we deduce that $H$ is minimal.\qed

\bigskip

Our first application of Theorem B
shows that the groups $M_{p,n}$ are $\hm$ for every prime $p$ and every $n\in \N.$
\begin{example}\label{padics:rtimes:padics}
Being a subgroup of $(\Z_p,+)\rtimes \Z_p^*$, the group $M_{p,n}=(\Z_p,+)\rtimes C_p^{p^n}$ is \HLM\ by Theorem A. As $M_{p,n}$ is torsionfree, it is also \HM\ by  Theorem B.
\end{example}

The following proposition proves the implication (2) in the diagram in the introduction under an assumption much weaker than total disconnectedness (equivalent to the property
of having a compact connected component\footnote{or containing no lines, i.e., copies of $\R$.}).

\begin{prop} \label{prop:ltdc}
	Let $G$ be a locally compact group having a compact open subgroup $H$.
	\ben
	\item If $G$ is \CHLM, then it is \HLM.
	\item If $G$ is \CHM, then it is either discrete or contains a copy of $\Z_p$ for some prime $p$.
	\een
\end{prop}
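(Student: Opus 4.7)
The plan is to deduce both parts directly from the compact open subgroup $H$ together with results already established in the paper.

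For (1), the idea is to transfer hereditary local minimality from $H$ to the whole group via openness. Since $H$ is a compact subgroup of $G$ and $G$ is $\chlm$ by hypothesis, the group $H$ is $\hlm$. Being open in $G$, we may now invoke Lemma \ref{lem:HLM}(2), which says precisely that if an open subgroup is $\hlm$, then the ambient group is $\hlm$. This yields that $G$ itself is $\hlm$, as required.

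For (2), I would do a two-case split according to whether $H$ is finite or infinite. If $H$ is finite, then $\{e\}$ is open in $H$, hence open in $G$ (as $H$ is open in $G$), so $G$ is discrete. Otherwise $H$ is infinite. Since $G$ is $\chm$ and $H$ is a compact subgroup, $H$ is $\hm$; being infinite and compact, $H$ is a non-discrete locally compact $\hm$ group. Thus Proposition \ref{prop:ndhmlc}(2) applies and produces a topological copy of $\Z_p$ inside $H$ for some prime $p$, which is then contained in $G$.

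Neither step involves any genuine obstacle; the only care needed is to verify the hypotheses of the cited results (\emph{compact} for Lemma \ref{lem:HLM}(2), \emph{non-discrete locally compact} for Proposition \ref{prop:ndhmlc}(2)) are all satisfied in the form they are stated. In particular, the dichotomy in (2) is exactly the dichotomy between $H$ finite and $H$ infinite, and handling the finite case only requires noting that an open finite subgroup forces discreteness.
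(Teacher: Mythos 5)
Your proof is correct and follows essentially the same route as the paper: part (1) is exactly the paper's argument (hereditary local minimality of the compact open $H$ plus Lemma \ref{lem:HLM}(2)), and part (2) differs only in phrasing the dichotomy as $H$ finite versus infinite rather than $G$ discrete versus non-discrete, which is equivalent since $H$ is compact and open, before applying Proposition \ref{prop:ndhmlc}(2) to $H$. (Only a cosmetic slip: the hypothesis you need to check for Lemma \ref{lem:HLM}(2) is that the subgroup is \emph{open} and \HLM, not compact, which your argument in fact uses correctly.)
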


\begin{proof}

$(1)$:  Since  $G$ is \CHLM, $H$ is \HLM, and Lemma \ref{lem:HLM}(2) applies.

$(2)$: If $G$ is non-discrete and \CHM,  then also $H$ is non-discrete. Being also \HM\ and compact, Proposition \ref{prop:ndhmlc}(2) applies to $H$.
\end{proof}

\section{Proof of Theorem C}\label{lcHM}

We start this section with  the following apparently folklore property of central extensions. Recall that a group is \emph{$p$-torsionfree} if it has no element of order $p$.
\begin{lemma}\label{lem:Z1Z2}
Let $G$ be a group with non-trivial center. If $Z(G)$ is $p$-torsionfree and $G/Z(G)$ is a $p$-group, then $Z_2(G)=Z(G)$. If in addition $G/Z(G)$ is finite, then $G$ is abelian.
\end{lemma}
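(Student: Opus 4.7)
My plan is to prove the inclusion $Z_2(G) \subseteq Z(G)$ (the reverse inclusion being immediate) by exploiting the fact that for every $x \in Z_2(G)$ all commutators $[x,y]$ lie in $Z(G)$, so that the ``commutator with $x$'' map $\phi_x \colon G \to Z(G)$, $y \mapsto [x,y]$, is well-defined. The first key step is to verify that $\phi_x$ is actually a group homomorphism. This follows from the standard identity $[x,yz] = [x,y] \cdot y[x,z]y^{-1}$: since $[x,z] \in Z(G)$ by the hypothesis $x \in Z_2(G)$, the conjugation by $y$ on the right factor is trivial, so $[x,yz] = [x,y][x,z]$, which is exactly multiplicativity.

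The second step is to observe that $\phi_x$ vanishes on $Z(G)$ (elements of $Z(G)$ commute with $x$), hence it descends to a homomorphism $\overline{\phi_x} \colon G/Z(G) \to Z(G)$. Because $G/Z(G)$ is a $p$-group, every element in the image of $\overline{\phi_x}$ has order a power of $p$; but $Z(G)$ is $p$-torsionfree, so this image can only be trivial. Therefore $\phi_x$ is identically $e$, which means $x$ centralizes every $y \in G$, i.e., $x \in Z(G)$. This gives $Z_2(G) = Z(G)$.

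For the second assertion, I will argue by contradiction. Assume $G/Z(G)$ is finite and $G$ is non-abelian, so that $G/Z(G)$ is a non-trivial finite $p$-group. A non-trivial finite $p$-group has non-trivial center, so $Z(G/Z(G))$ is non-trivial. Any preimage in $G$ of a non-trivial element of $Z(G/Z(G))$ lies in $Z_2(G) \setminus Z(G)$, contradicting the equality $Z_2(G) = Z(G)$ established in the first part. Hence $G = Z(G)$, i.e., $G$ is abelian.

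The only mildly technical step is the verification that $\phi_x$ is a homomorphism, which is a one-line commutator computation; the rest is a straightforward application of the $p$-torsionfree hypothesis on $Z(G)$ together with the classical fact that a non-trivial finite $p$-group has non-trivial center. I do not anticipate any serious obstacle.
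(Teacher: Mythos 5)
Your proof is correct. It differs from the paper's argument in how the first assertion is organized: the paper argues by contradiction, choosing $x\in Z_2(G)\setminus Z(G)$ with $x^p\in Z(G)$, and compares the inner automorphism $\phi$ induced by $x$ (which satisfies $\phi^p=\mathrm{Id}$ because $x^p$ is central) with the computation $\phi^p(y)=[x,y]^p\,y$, forcing $[x,y]^p=e$ and hence $[x,y]=e$ by $p$-torsionfreeness; you instead fix an arbitrary $x\in Z_2(G)$ and observe that $y\mapsto[x,y]$ is a homomorphism $G\to Z(G)$ vanishing on $Z(G)$, hence factoring through the $p$-group $G/Z(G)$, so its image is $p$-torsion inside the $p$-torsionfree group $Z(G)$ and therefore trivial. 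Both proofs rest on the same underlying bilinearity of the commutator pairing $Z_2(G)\times G\to Z(G)$; the paper exploits the $p$-power order in the first variable (the order of $x$ modulo the center), while you exploit it in the second variable (the order of $y$ modulo the center). Your version is direct rather than by contradiction, avoids the need to select an element of order exactly $p$ modulo the center, and is arguably more structural, at the cost of the short verification that $\phi_x$ is a homomorphism (your identity $[x,yz]=[x,y]\cdot y[x,z]y^{-1}$ is the right one, and the central factor indeed makes the conjugation disappear); note only that the passage from ``$p$-torsionfree'' to ``no elements of order $p^k$, $k\geq 1$'' deserves the one-line remark that a $p^k$-torsion element has a power of order $p$. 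The deduction of the second assertion, via the non-triviality of the center of a non-trivial finite $p$-group, is exactly the paper's argument.
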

\begin{proof}
Let $Z_1=Z(G)$ be $p$-torsionfree and $G/Z_1$ be a $p$-group. Assume for a contradiction that  $Z_1\lneqq Z_2$, where $Z_2=Z_2(G)$. We can choose $x\in Z_2 \setminus Z_1$  such that $x^p \in  Z_1$. Since $x\notin Z_1$, there exists $y\in G$ that does not commute with $x$. Then $a =[x,y]=xyx^{-1}y^{-1} \ne e$, and $a\in Z_1$, by the definition of $Z_2$. Let $\phi:G\to G$ be the inner automorphism induced by $x$ and observe that $\phi(y)=xyx^{-1}=ay$. Since $x^p \in Z_1$ is a central element, $\phi^p = Id_G.$ On the other hand, as $a\in Z_1$, we also have $\phi^p(y) = {x^p}yx^{-p}=a^py.$ So $y = a^py$, and it follows that $a^p = e$. Since $a\in Z_1$, which is $p$-torsionfree, this yields $a = e$, a contradiction.

For the last assertion, recall that finite $p$-groups are nilpotent, so  if $G$ is not abelian, then $Z(G/Z_1(G))$ is non-trivial, i.e., $Z_1(G)\lneqq Z_2(G)$.
\end{proof}

In the following result, we prove some more properties a non-discrete \HM, locally compact  group with non-trivial center satisfies.
\begin{corol}\label{cor:pgr}
	Let $G$ be a non-discrete \HM \ locally compact  group with non-trivial center. Then there is a prime number $p$ such that:
	\ben
	\item for every non-central element $x \in G$, we have $\Z_p \cong Z(G) \leq \overline{\langle x\rangle} \cong \Z_p$. In particular, for every closed subgroup $H$ of $G$, either $H \leq Z(G)$, or $Z(G) \leq H$;

	\item $G$ is torsionfree, and $G/Z(G)$ is a center-free $p$-group.
	\een
\end{corol}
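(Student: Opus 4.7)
The first move is to apply Proposition \ref{prop:ndhmlc}(2) to deduce that $G$ is totally disconnected, satisfies ($\mathcal{N}_{fn}$), and has center trivial or isomorphic to $\Z_p$ for some prime $p$. Since $Z(G) \neq \{e\}$ by hypothesis, this fixes the prime $p$ with $Z(G) \cong \Z_p$ that appears in the statement.

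For part (1), fix a non-central $x \in G$ and set $H = \overline{\langle x \rangle \cdot Z(G)}$. Since $Z(G)$ is central, $\langle x \rangle \cdot Z(G)$ is abelian, hence so is its closure $H$. As $H$ is a closed abelian subgroup of $G$ containing the infinite group $Z(G)$, Lemma \ref{Prod+Steph:lemma} forces $H \cong \Z_q$ for some prime $q$. Now $Z(G) \cong \Z_p$ sits inside $H \cong \Z_q$ as a non-trivial closed subgroup, and every such subgroup of $\Z_q$ is itself topologically isomorphic to $\Z_q$; the resulting isomorphism $\Z_p \cong \Z_q$ forces $p = q$, so $H \cong \Z_p$. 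The closed subgroups of $H \cong \Z_p$ form the chain $H \supsetneq pH \supsetneq p^2 H \supsetneq \cdots \supsetneq \{0\}$, so the two closed subgroups $\overline{\langle x \rangle}$ and $Z(G)$ of $H$ are comparable. The inclusion $\overline{\langle x \rangle} \leq Z(G)$ would contradict $x \notin Z(G)$, hence $Z(G) \leq \overline{\langle x \rangle}$, and $\overline{\langle x \rangle}$, being a non-trivial closed subgroup of $H \cong \Z_p$, is itself isomorphic to $\Z_p$. The ``in particular'' clause then follows immediately: a closed subgroup $H' \leq G$ not contained in $Z(G)$ contains some non-central $x$, and consequently $Z(G) \leq \overline{\langle x \rangle} \leq H'$.

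For part (2), torsionfreeness is immediate from (1): every non-trivial element of $G$ lies either in $Z(G) \cong \Z_p$ (if central) or in $\overline{\langle x \rangle} \cong \Z_p$ (if non-central), both torsionfree. To see that $G/Z(G)$ is a $p$-group, fix a non-central $x$ and a topological identification $\overline{\langle x \rangle} \cong (\Z_p,+)$; under this identification $x$ corresponds to a unit of $\Z_p$ (since it topologically generates $\overline{\langle x \rangle}$), while $Z(G)$ corresponds to $p^k \Z_p$ for some integer $k \geq 1$ ($k \geq 1$ precisely because $x \notin Z(G)$). A routine $p$-adic valuation check then shows that the least positive $n$ with $x^n \in Z(G)$ equals $p^k$, so the image of $x$ in $G/Z(G)$ has order $p^k$, a power of $p$. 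Finally, Lemma \ref{lem:Z1Z2} applies: $Z(G) \cong \Z_p$ is $p$-torsionfree, and $G/Z(G)$ is a $p$-group, so $Z_2(G) = Z(G)$, which means $Z(G/Z(G)) = Z_2(G)/Z(G)$ is trivial, i.e., $G/Z(G)$ is center-free.

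The main technical step is the construction of $H$ and its identification with $\Z_p$; once this is in hand, the totally ordered lattice of closed subgroups of $\Z_p$ carries out essentially all the remaining work in (1), and Lemma \ref{lem:Z1Z2} provides the clean route to center-freeness of $G/Z(G)$ in (2).
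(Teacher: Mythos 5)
Your proof is correct and follows essentially the same route as the paper: Proposition \ref{prop:ndhmlc}(2) fixes $Z(G)\cong\Z_p$, the closure of $\langle Z(G),x\rangle$ is identified with $\Z_p$ via Lemma \ref{Prod+Steph:lemma} and the total ordering of closed subgroups of $\Z_p$ yields (1), and the index/valuation argument plus Lemma \ref{lem:Z1Z2} yields (2). The only difference is cosmetic: you spell out why the prime given by Lemma \ref{Prod+Steph:lemma} must equal $p$, a step the paper leaves implicit.
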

\begin{proof}
	$(1)$: The center $Z(G)$ is isomorphic to $\Z_p$ for some prime $p$ by Proposition \ref{prop:ndhmlc}(2).
	
	Let $x \in G $ be a non-central element, and consider the abelian subgroup $N_x = \overline{\langle Z(G), x\rangle}$ of $G$. As $N_x$ contains $Z(G) \cong \Z_p$, it is isomorphic to $\Z_p$ by Lemma \ref{Prod+Steph:lemma}. As the closed subgroups of $\Z_p$ are totally ordered by inclusion, we obtain that $Z(G) \leq \overline{\langle x\rangle} = N_x \cong \Z_p$.
	It follows that if $H$ is a closed non-central subgroup of $G$, then $Z(G)\leq H$.  $N_x$.
	
	$(2)$: If $x\notin Z(G)$, then by item $(1)$, the index $[\overline{\langle x\rangle}:Z(G)]=p^n$ for some $n\in \N$, so $x^{p^n}\in Z(G)$. This proves that $G/Z(G)$ is a $p$-group and that $G$ is torsionfree, so $Z_2(G)=Z(G)$ by Lemma \ref{lem:Z1Z2}.
\end{proof}

In the next result, we use Corollary \ref{cor:pgr} to extend Corollary \ref{cor:spr}.
\begin{corol}\label{cor:nprodanov}
An infinite \HM\ locally compact hypercentral group $G$ is isomorphic to $\Z_p$ for some prime $p$.
\end{corol}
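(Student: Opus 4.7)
The plan is to reduce to the non-discrete case, then exploit the structure theorems already proved (Proposition \ref{prop:ndhmlc} and Corollary \ref{cor:pgr}) together with the fact that a non-trivial hypercentral group must have non-trivial center.

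First I would show that $G$ cannot be discrete. As recalled after Definition of hypercentrality in the Notation subsection, every hypercentral group is locally nilpotent, hence locally solvable. On the other hand, Lemma \ref{lem:tnlf} states that an infinite discrete $\hm$ group cannot be locally solvable. This contradiction forces $G$ to be non-discrete.

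Next, since $G$ is hypercentral and non-trivial, its center $Z(G)$ is non-trivial (otherwise the whole upper central series would remain trivial, contradicting $G = Z_\alpha(G)$ for some $\alpha$). Being a non-discrete locally compact $\hm$ group with non-trivial center, Proposition \ref{prop:ndhmlc}(2) yields $Z(G) \cong \Z_p$ for some prime $p$, and then Corollary \ref{cor:pgr}(2) gives that the quotient $G/Z(G)$ is center-free.

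Finally, hypercentrality passes to quotients: if $Z_\alpha(G) = G$, then the induced upper central series of $G/Z(G)$ (which satisfies $Z_\beta(G/Z(G)) = Z_{\beta+1}(G)/Z(G)$ by transfinite induction) also exhausts $G/Z(G)$. Hence $G/Z(G)$ is a hypercentral group; since a non-trivial hypercentral group has non-trivial center, the conclusion of the previous paragraph forces $G/Z(G)$ to be trivial. Therefore $G = Z(G) \cong \Z_p$, as desired.

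The only mild obstacle is verifying cleanly that hypercentrality is inherited by the quotient $G/Z(G)$ and that a non-trivial hypercentral group has non-trivial center; both are standard facts from the theory of infinite groups, and they slot directly into the argument without any topological subtlety since they concern only the algebraic structure.
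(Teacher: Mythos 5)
Your proposal is correct and follows essentially the same route as the paper: hypercentral $\Rightarrow$ locally nilpotent $\Rightarrow$ locally solvable rules out discreteness via Lemma \ref{lem:tnlf}, non-trivial center plus Corollary \ref{cor:pgr} gives $Z(G)\cong\Z_p$ with $G/Z(G)$ center-free (i.e.\ $Z_2(G)=Z(G)$), and hypercentrality then forces $G=Z(G)$. Your last step, passing hypercentrality to the quotient $G/Z(G)$ and using that a non-trivial hypercentral group has non-trivial center, is just an explicit spelling-out of the paper's closing sentence.
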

\begin{proof} By a well-known theorem of Mal'cev (see \cite[page 8]{DIX}), all hypercentral groups are locally nilpotent, so in particular $G$ is locally solvable. Then $G$ is not discrete by Lemma \ref{lem:tnlf}, and clearly it has  non-trivial center. Hence $Z_2(G)=Z(G)\cong \Z_p$ by Corollary \ref{cor:pgr}. Since $G$ is hypercentral, we deduce that $G=Z(G)\cong \Z_p$.
\end{proof}

The next results will be used in the subsequent proof of Theorem C.
\begin{prop}\label{new:thmB}
If $G$ is a \HM \ locally compact group with non-trivial center, then every abelian subgroup of the quotient group $G/Z(G)$ is finite. 
\end{prop}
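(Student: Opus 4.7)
The plan is to lift an abelian subgroup of the quotient to a nilpotent subgroup of $G$ and then invoke Corollary \ref{cor:nprodanov}. Given an abelian subgroup $A\leq G/Z(G)$, I would first replace $A$ by its closure $\overline{A}$ in $G/Z(G)$, which is still abelian (note that $Z(G)$, being an intersection of centralizers, is closed in $G$, so the quotient topology on $G/Z(G)$ is Hausdorff). Letting $\pi\colon G\to G/Z(G)$ be the canonical projection, I would set $B:=\pi^{-1}(\overline{A})$. Then $B$ is a closed subgroup of $G$ that contains $Z(G)$ and satisfies $B/Z(G)=\overline{A}$, so $[B,B]\leq Z(G)\leq Z(B)$. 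In particular, $B$ is nilpotent of class at most $2$, and therefore hypercentral.

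Now $B$ is a closed subgroup of the hereditarily minimal locally compact group $G$, hence $B$ itself is hereditarily minimal and locally compact. Corollary \ref{cor:nprodanov} then forces $B$ to be either finite or isomorphic to $\Z_p$ for some prime $p$. In the first case $\overline{A}=B/Z(G)$ is evidently finite, and hence so is $A$. In the second case, $Z(G)$ is a non-trivial closed subgroup of $B\cong\Z_p$, and every such subgroup has the form $p^k\Z_p$; in particular it has finite index in $B$, so again $\overline{A}=B/Z(G)$, and therefore $A$, is finite.

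The key conceptual point, and the reason the argument can be so short, is the preliminary step of passing to $\overline{A}$: this is what ensures the lifted subgroup $B$ is genuinely closed in $G$ and therefore eligible for the structural dichotomy provided by Corollary \ref{cor:nprodanov}. Once this is in place, the remarkably rigid conclusion that an infinite hypercentral hereditarily minimal locally compact group must be $\Z_p$ leaves essentially no room for $B/Z(G)$ to be infinite. I expect this closing maneuver to be the only subtle step; in particular the argument works uniformly for discrete and non-discrete $G$, since if $G$ is discrete then so is $B$, making the alternative $B\cong\Z_p$ impossible and forcing $B$ (and hence $A$) to be finite directly.
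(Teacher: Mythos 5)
Your proof is correct and follows essentially the same route as the paper: pass to the closure of the abelian subgroup, pull it back along the quotient map to obtain a closed nilpotent (class $\leq 2$, hence hypercentral) subgroup containing $Z(G)$, and apply Corollary \ref{cor:nprodanov} to conclude that this preimage is finite or isomorphic to $\Z_p$, so that its quotient by $Z(G)$ is finite. The only cosmetic difference is that the paper dispatches the discrete case separately via Lemma \ref{lem:tnlf}, whereas you let Corollary \ref{cor:nprodanov} handle both cases uniformly, which is equally valid.
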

\begin{proof}
Assume that $G/Z(G)$ has an abelian subgroup, so  also its closure $A$ is abelian. Let $H = q^{-1}(A)$, where  $q:G \to G/Z(G)$ is the canonical homomorphism. So, $H$ is a locally compact subgroup of $G$ with $H \geq Z(G)$. Hence, $Z(H) \geq Z(G)\cap H=Z(G)$, and by the third isomorphism theorem we deduce that $H/Z(H)$ is a quotient of the abelian group $H/Z(G) \cong A$, so $H$ is nilpotent (of class $\leq 2$).

 If $G$ is discrete, then $H$ is finite by Lemma \ref{lem:tnlf}. As $H = q^{-1}(A)$, we deduce that also $A$ is finite.
Now we assume that $G$ is non-discrete.  By Proposition \ref{prop:ndhmlc}(2), $Z(G) \cong \Z_p$ for some prime $p$. As $H$ contains $Z(G)$,   Corollary \ref{cor:nprodanov} implies that also $H$ is isomorphic to $\Z_p$. So $A \cong H/Z(G)$ is finite.
\end{proof}

The quotient group $Q=G/Z(G)$ is torsion, by Lemma \ref{lem:tnlf} and Corollary \ref{cor:pgr}.
According to Fact \ref{fac:lslf} and Fact \ref{fac:lfia}, the property of $Q$ described in Proposition \ref{new:thmB} is equivalent also to having no infinite locally solvable subgroup, or having no infinite locally finite subgroup.

\begin{corol}\label{new:corol:for:ThmB}
Let $G$ be a non-discrete \HM \ locally compact  group with non-trivial center. If $G/Z(G)$ is a locally finite group, then $G \cong \Z_p$ for some prime $p$.
\end{corol}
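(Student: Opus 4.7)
The plan is to reduce to the abelian case and then invoke \textbf{Corollary \ref{cor:spr}}. All the necessary machinery has just been established in this section, so the work consists of chaining together the right facts in the right order. The crux is to show that $G/Z(G)$ is actually finite, and then that $G$ itself is abelian.

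First I would apply \textbf{Proposition \ref{prop:ndhmlc}(2)}, using that $G$ is non-discrete, \HM, locally compact with non-trivial center, to conclude that $Z(G)\cong\Z_p$ for some prime $p$; in particular $Z(G)$ is torsionfree (hence $p$-torsionfree). Set $Q=G/Z(G)$. Next I would invoke \textbf{Proposition \ref{new:thmB}}, which tells us that every abelian subgroup of $Q$ is finite. Since by hypothesis $Q$ is locally finite, \textbf{Fact \ref{fac:lfia}} says that if $Q$ were infinite it would contain an infinite abelian subgroup; combined with Proposition \ref{new:thmB} this forces $Q$ to be finite.

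Now I would bring in \textbf{Corollary \ref{cor:pgr}(2)}, which asserts that $Q=G/Z(G)$ is a $p$-group. Together with finiteness this makes $Q$ a finite $p$-group, while $Z(G)\cong\Z_p$ is $p$-torsionfree. These are exactly the hypotheses of the second assertion of \textbf{Lemma \ref{lem:Z1Z2}}, which then yields that $G$ itself is abelian. At this point $G$ is an infinite non-discrete \HM\ locally compact abelian group, so \textbf{Corollary \ref{cor:spr}} gives $G\cong\Z_p$, as required.

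No serious obstacle arises: the argument is essentially a bookkeeping assembly of the results proved earlier in this section. If any step deserves to be highlighted, it is the passage from ``$Q$ locally finite'' to ``$Q$ finite,'' which is where the hypothesis on $G/Z(G)$ is consumed via the interplay between Proposition \ref{new:thmB} and Fact \ref{fac:lfia}; everything downstream of that step is a direct application of Lemma \ref{lem:Z1Z2} followed by Prodanov's theorem in the form of Corollary \ref{cor:spr}.
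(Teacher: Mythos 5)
Your proposal is correct and follows essentially the same route as the paper: Corollary \ref{cor:pgr} (via Proposition \ref{prop:ndhmlc}(2)) to get $Z(G)\cong\Z_p$ and $G/Z(G)$ a $p$-group, then Fact \ref{fac:lfia} against Proposition \ref{new:thmB} to force $G/Z(G)$ finite, and finally Lemma \ref{lem:Z1Z2} to conclude $G$ is abelian, hence $G\cong\Z_p$. The only cosmetic difference is that the paper reads off $G=Z(G)\cong\Z_p$ directly from abelianness rather than re-invoking Corollary \ref{cor:spr}, which changes nothing.
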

\begin{proof}
By Corollary \ref{cor:pgr}, we obtain that $G/Z(G)$ is a $p$-group and $Z(G)\cong \Z_p$.

If $G/Z(G)$ is infinite, then it has an infinite abelian subgroup by Fact \ref{fac:lfia}, contradicting Proposition \ref{new:thmB}. So $G/Z(G)$ is finite, and $G$ is abelian by Lemma \ref{lem:Z1Z2}.
\end{proof}

As a consequence of Corollary \ref{new:corol:for:ThmB} we now prove Theorem C.

\medskip

\noindent
{\bf Proof of Theorem C.} Let $G$ be an infinite \HM\  locally compact group with non-trivial center that is either compact or locally solvable. We have to prove that $G \cong \Z_p$, for some prime $p$.

First note that $G$ is non-discrete by Lemma \ref{lem:tnlf}. Applying Corollary \ref{cor:pgr}, we obtain that $G/Z(G)$ is a $p$-group and $Z(G)\cong \Z_p$. In view of Corollary \ref{new:corol:for:ThmB}, it suffices to prove that $G/Z(G)$ is locally finite.
If $G$ is locally solvable, then its quotient $G/Z(G)$ is locally finite by Fact \ref{fac:lslf}.
If $G$ is compact, then $G/Z(G)$ is compact torsion, so locally finite by Fact \ref{fac:zel}(a).
\qed

\bigskip

Let us see that the  assumption ``compact or locally solvable'' cannot be removed in Theorem C.
Recall that the countable discrete group $G = \Z(2) \times T$ from Example \ref{ex:tarski} is hereditarily minimal with non-trivial center. Clearly, this group is non-abelian, and it is neither locally solvable nor compact.
	
\medskip

Applying Theorem C to the groups studied in Corollary \ref{cor:pgr} (see also Proposition \ref{new:thmB}), we can deduce additional properties they share.

\begin{thm}\label{add:prop:thm}
	Let $G$ be a non-discrete hereditarily minimal locally compact group, and assume $\{e\} \neq Z(G) \neq G$.
	Then there exists a prime $p$ such that:
	\ben
	
	\item every non-trivial closed subgroup $H$ of $G$ (e.g., $Z(G)$) is open;
	\item every non-trivial compact subgroup $H$ of $G$ is isomorphic to $\Z_p$;
	\item every finite subgroup of $G/Z(G)$ is cyclic and  $G/Z(G)$  satisfies $(\mathcal N_{fn})$.
	\een
\end{thm}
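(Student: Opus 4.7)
The three statements are essentially consequences of Corollary \ref{cor:pgr} (giving a prime $p$ with $Z(G) \cong \Z_p$ and the dichotomy that every closed subgroup of $G$ is comparable with $Z(G)$) and Theorem C (forcing any infinite compact HM group with non-trivial center to be $\cong \Z_q$ for some prime $q$). My plan is to first establish the intermediate fact that $Z(G)$ itself is open in $G$, from which (1) follows at once, deduce (2), and finally handle (3) by examining preimages under the canonical projection $\pi : G \to G/Z(G)$.

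To see that $Z(G)$ is open I would invoke van Dantzig's theorem (legitimate since $G$ is non-discrete, totally disconnected by Proposition \ref{prop:ndhmlc}(1), and locally compact) to obtain a non-trivial compact open subgroup $U$. The dichotomy then splits into two cases. If $U \leq Z(G)$, then $Z(G)$ is open because it contains the open set $U$. If instead $Z(G) \leq U$, then $U$ is an infinite compact HM group with non-trivial center, so Theorem C gives $U \cong \Z_q$; the closed embedding $Z(G) \cong \Z_p \hookrightarrow \Z_q$ forces $p = q$ (non-trivial closed subgroups of $\Z_q$ being all $\cong \Z_q$), after which $Z(G)$ is a non-trivial closed, hence open, subgroup of $U \cong \Z_p$, and thus open in $G$. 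With $Z(G)$ open, (1) follows from the dichotomy: a non-trivial closed $H$ either contains $Z(G)$ (immediately open) or is a non-trivial closed subgroup of $Z(G) \cong \Z_p$, hence of finite index in $Z(G)$ and thus open. Statement (2) follows similarly: a non-trivial compact $H$ is open by (1), and if $Z(G) \leq H$, then $H$ is a compact infinite HM group with non-trivial center, so Theorem C combined with the closed embedding $Z(G) \hookrightarrow H$ gives $H \cong \Z_p$.

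For (3), let $F \leq G/Z(G)$ be a non-trivial finite subgroup; then $\pi^{-1}(F)$ is a closed subgroup of $G$ strictly containing $Z(G)$ with $[\pi^{-1}(F):Z(G)] = |F|$, hence compact and infinite. Theorem C applied to this HM group with non-trivial center gives $\pi^{-1}(F) \cong \Z_p$, so $F \cong \Z_p/p^n\Z_p$ is cyclic. To verify $(\mathcal{N}_{fn})$, assume in addition that $F$ is normal; then $N := \pi^{-1}(F) \cong \Z_p$ is closed normal in $G$ with $Z(G) \subsetneq N$, and the conjugation action of $G$ on $N$ yields a homomorphism $G \to \mathrm{Aut}(N) \cong \Z_p^*$ whose kernel $C_G(N)$ contains $Z(G)$; consequently $G/C_G(N)$ is a non-trivial $p$-torsion subgroup of $\Z_p^*$, using Corollary \ref{cor:pgr}(2), whereby $G/Z(G)$ is a $p$-torsion group. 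The main delicate point is the parity-dependent behaviour of $\Z_p^*$ at this stage. For odd $p$, $\Z_p^*$ is $p$-torsionfree (its torsion part $F_p$ has order $p-1$ coprime to $p$, while $C_p$ is torsionfree), which is an immediate contradiction. For $p = 2$ the $2$-torsion of $\Z_2^*$ reduces to $\{\pm 1\}$, so some $g \in G \setminus C_G(N)$ must act on $N$ by inversion; applied to any $z \in Z(G) \subseteq N$ this yields $z = g z g^{-1} = -z$, so $2z = 0$ in $\Z_2$ forces $z = 0$, contradicting $Z(G) \neq \{e\}$.
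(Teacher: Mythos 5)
Your proposal is correct, and for items (1), (2) and the cyclicity part of (3) it follows essentially the same path as the paper: van Dantzig plus the dichotomy of Corollary \ref{cor:pgr}(1) and Theorem C to make $Z(G)$ (hence every non-trivial closed subgroup) open, Theorem C again for compact subgroups, and the identification $\pi^{-1}(F)\cong\Z_p$ with $Z(G)$ of finite index to get $F$ cyclic. (Two harmless loose ends you should still tie up: in (2) the case $H\leq Z(G)$, which is immediate since non-trivial closed subgroups of $\Z_p$ are copies of $\Z_p$; and in (3) the fact that the prime given by Theorem C for $\pi^{-1}(F)$ must be $p$, which follows because $\pi^{-1}(F)$ contains the closed subgroup $Z(G)\cong\Z_p$ --- or simply quote your own item (2), as the paper does.) Where you genuinely diverge is the verification of $(\mathcal N_{fn})$. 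The paper argues pointwise: with $H=\pi^{-1}(N)\cong\Z_p$ and $Z(G)=p^nH$, it takes $x\in H\setminus Z(G)$, a non-commuting $y$, and notes that conjugation by $y$ fixes $p^nx\in Z(G)$, so by torsionfreeness of the abelian group $H$ it fixes $x$ --- a contradiction that is uniform in $p$ and needs no structure theory of $\operatorname{Aut}(\Z_p)$. You instead embed $G/C_G(N)$ into $\operatorname{Aut}(N)\cong\Z_p^*$, invoke Corollary \ref{cor:pgr}(2) to see this image is a non-trivial $p$-torsion subgroup, and exploit the torsion structure of $\Z_p^*$, with the separate inversion argument at $p=2$ (an element acting as $-1$ would invert the non-trivial central elements of $N$, impossible in torsionfree $\Z_2$). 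Your route is valid and makes transparent how a hypothetical finite normal subgroup of $G/Z(G)$ would have to act, at the cost of a case split on the parity of $p$ and the extra input that $G/Z(G)$ is a $p$-group; the paper's divisibility trick is shorter and avoids both.
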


\begin{proof}
	By Corollary \ref{cor:pgr}, $G$ is torsionfree, $Z(G)\cong \Z_p$ and $G/Z(G)$ is a $p$-group, for some prime $p$.
	
	$(1)$: First we prove that $Z(G)$ is open in $G$.
	By Proposition \ref{prop:ndhmlc}(1), $G$ is totally disconnected, so it has a local base at the identity consisting of compact open subgroups, and let $K$ be one of these subgroups. If $K$ is central, then $Z(G)$ is open. Otherwise, we have $K\geq Z(G)$ by Corollary \ref{cor:pgr}(1), so $K\cong \Z_p$ by Theorem C.  Moreover, the index $[K:Z(G)]$ is finite and $Z(G)$ is open in $K$, so $Z(G)$ is open in $G$.

	Now let $H$ be a non-trivial closed subgroup of $G$. If it is contained in $Z(G)$ then, being closed in $Z(G) \cong \Z_p$, it is also open in $Z(G)$, hence open in $G$. Otherwise, $H$ contains $Z(G)$ by Corollary \ref{cor:pgr}(1), so $H$ is open.
	
	$(2)$: If $H$ is a non-trivial compact subgroup of $G$, then $H$ is infinite as $G$ is torsionfree. This implies that $H\cap Z(G)\neq \{e\}$, since $Z(G)$ is open in $G$. So we deduce that $H \cong \Z_p$ by Theorem C.

	$(3)$: Let $\pi: G \to G/Z(G)$ be the canonical map, and let $F$ be a finite subgroup of $G/Z(G)$.
	Then $\pi^{-1}(F)$ is a closed subgroup of $G$, containing $\ker \pi = Z(G)\cong \Z_p$, and such that $[\pi^{-1}(F) : Z(G)] = |F|$ is finite. Then $\pi^{-1}(F)$ is compact, hence isomorphic to $\Z_p$ by item $(2)$. If $g \in G$ is such that $\Z_p \cong \overline{\langle g \rangle} = \pi^{-1}(F)$, then $F = \langle \pi (g) \rangle$.
	
	To check that $G/Z(G)$  satisfies $(\mathcal N_{fn})$ pick a finite non-trivial normal subgroup $N$  of $G/Z(G)$ and let $|N| = p^n$.
	By what we have just proved, $H = \pi^{-1}(N)$ is isomorphic to $\Z_p$, and indeed $\Z_p \cong Z(G) = p^n H$. Let $x \in H \setminus Z(G)$, and $y \in G$ be an element non-commuting with $x$. As $H$ is normal in $G$, we can consider the conjugation by $y$ as a map $\phi : H \to H$. Since $p^n x \in Z(G)$, we have $\phi (p^n x ) = p^n x$; on the other hand, $\phi (p^n x ) = p^n \phi ( x )$, so we conclude $p^n \phi ( x ) = p^n x$. As $H$ is abelian and torsionfree, we deduce $\phi(x) = x$, a contradiction.
\end{proof}

Note that $G$ as in Theorem \ref{add:prop:thm} is neither compact nor locally solvable, by Theorem C.
See Questions \ref{que:exist} and \ref{QQ:DC} for further comments and Proposition \ref{prop:cand} for a partial converse.

We conclude this section by listing the three possibilities (trichotomy) for an infinite  locally compact non-abelian HM group $G$:

\begin{enumerate}
\item $G$ is discrete, if and only if $G$ is torsion (by Lemma \ref{lem:tnlf} and Proposition \ref{prop:ndhmlc}(2)). In this case $G$ is not locally finite but may have finite non-trivial center (see Example \ref{ex:tarski}).

\hspace{-1.0cm}If $G$ is not discrete, we apply Proposition \ref{prop:ndhmlc}, and we obtain the following two cases:
\item $Z(G) = \{e\}$, $G$ contains a copy of $\Z_p$, and satisfies ($\mathcal{N}_{fn}$).
\item $Z(G) = Z_2(G) \cong \Z_p$ is open and proper in $G$, and $G$ has the properties listed in Corollary \ref{cor:pgr} and Theorem \ref{add:prop:thm}.
\end{enumerate}

To case (1) was dedicated \S 3.1, while case (2) (for solvable groups) will be the subject of the rest of the paper.

Note that the groups in (1) and in (3) are neither compact, nor locally solvable, by Lemma \ref{lem:tnlf} and Theorem C.

\section{Semidirect products of $p$-adic integers}\label{Semidirect products of p-adic integers}

We begin this section with a general result on semidirect products, and their quotients. We then apply it in the subsequent Lemmas \ref{lemma:speder} and \ref{commTn}, where we consider some of the semidirect products introduced in Example \ref{Exaaa}.
\begin{lemma}\label{claim:gender}
	Let $\alpha: Y\times X\to X$ be a continuous action of an abelian group $Y$ on an abelian group $X$, and consider the topological semidirect product $G = X\rtimes_\alpha Y$.
	Consider the subgroup of $X$ defined by $A = \langle x-\alpha(y,x): x\in X, y\in Y \rangle$.
	
	Then the derived group $G'$ coincides with $A \rtimes_\alpha \{e_Y\}$, and the quotient group $G/G'$  is  topologically isomorphic to  $(X/A) \times Y$.
\end{lemma}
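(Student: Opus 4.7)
\textbf{Proof plan for Lemma \ref{claim:gender}.}
The plan is to first pin down $G'$ by a direct commutator computation, and then identify the quotient $G/G'$ by exhibiting an explicit continuous open surjection onto $(X/A)\times Y$.

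\smallskip

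Writing $X$ additively and $Y$ multiplicatively, elements of $G=X\rtimes_\alpha Y$ are pairs $(x,y)$ with product $(x_1,y_1)(x_2,y_2)=(x_1+\alpha(y_1,x_2),y_1y_2)$ and inverse $(x,y)^{-1}=(-\alpha(y^{-1},x),y^{-1})$. I would compute the commutator $[(x_1,y_1),(x_2,y_2)]$ step by step and, using crucially that $Y$ is abelian (so $y_1y_2y_1^{-1}y_2^{-1}=e_Y$ and $y_1y_2y_1^{-1}=y_2$), reduce it to
\[
\bigl((x_1-\alpha(y_2,x_1))-(x_2-\alpha(y_1,x_2)),\,e_Y\bigr).
\]
Hence every commutator lies in $A\times\{e_Y\}$, which shows $G'\subseteq A\rtimes_\alpha\{e_Y\}$. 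For the converse inclusion, specializing the formula with $y_1=e_Y$ and $x_2=0$ gives exactly the generator $x-\alpha(y,x)$ of $A$ as a commutator; since $A\rtimes_\alpha\{e_Y\}$ (whose multiplication coincides with that of $A\le X$, as $\alpha(e_Y,\cdot)=\mathrm{id}_X$) is generated inside $G$ by these generators, we obtain $A\rtimes_\alpha\{e_Y\}\subseteq G'$, and therefore equality.

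\smallskip

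For the quotient description, I would define the map
\[
f\colon G\longrightarrow (X/A)\times Y,\qquad (x,y)\longmapsto (x+A,\,y).
\]
To see that $f$ is a group homomorphism, note that $x-\alpha(y,x)\in A$ for all $x\in X$ and $y\in Y$, so $\alpha(y,x)\equiv x\pmod A$; consequently $f((x_1,y_1)(x_2,y_2))=(x_1+\alpha(y_1,x_2)+A,y_1y_2)=(x_1+x_2+A,y_1y_2)$, which matches $f(x_1,y_1)f(x_2,y_2)$ in the direct product $(X/A)\times Y$. Continuity of $f$ is immediate, since on the underlying topological space $X\times Y$ the map $f$ is just $(\pi_A\times\mathrm{id}_Y)$, where $\pi_A\colon X\to X/A$ is the canonical projection. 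The same description shows that $f$ is open, because $\pi_A\times\mathrm{id}_Y$ is open. Clearly $f$ is surjective, and by the commutator analysis above $\ker f=A\rtimes_\alpha\{e_Y\}=G'$. A continuous, open, surjective homomorphism with kernel $G'$ descends to a topological group isomorphism $G/G'\cong (X/A)\times Y$.

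\smallskip

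The only step that requires genuine care is the commutator computation, since the non-trivial twist in the multiplication of $G$ produces three $\alpha$-terms that must collapse correctly; abelianness of $Y$ is what makes the two $\alpha(y_1y_2y_1^{-1}\,\cdot,\cdot)$-terms simplify so that the result lies in $X\times\{e_Y\}$ and, more precisely, in $A\times\{e_Y\}$. Everything else — surjectivity, openness, and the identification of the kernel — is then routine.
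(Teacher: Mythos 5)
Your proposal is correct and follows essentially the same route as the paper: the commutators of the special elements $(x,e_Y)$ and $(0,y)$ give $A\rtimes_\alpha\{e_Y\}\subseteq G'$, and the identification of the quotient uses the same explicit continuous open surjection $(x,y)\mapsto(x+A,y)$ onto $(X/A)\times Y$ with kernel $A\rtimes_\alpha\{e_Y\}$. The only cosmetic difference is that you obtain the reverse inclusion $G'\subseteq A\rtimes_\alpha\{e_Y\}$ from the general commutator formula (which you compute correctly, using that $Y$ is abelian), whereas the paper first checks normality of $A\rtimes_\alpha\{e_Y\}$ and then deduces this inclusion from the abelianness of $G\big/(A\rtimes_\alpha\{e_Y\})\cong(X/A)\times Y$; both arguments are sound.
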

\begin{proof}
	For $y\in Y$, let $A_y = \{x-\alpha(y,x): x\in X\}$, then  $A = \langle A_y: y \in Y\rangle $. Note that the commutator $[(x,e_X), (e_Y,y)] = (x-\alpha(y,x),e_Y)$ for every $x\in X$ and $y\in Y$, so
	$A \rtimes \{e_Y\} \leq G'$.
	
	On the other hand, every $y,y'\in Y$ commute, thus
	\[
	\alpha(y', A_y)  = \{ \alpha(y',x) - \alpha(y', \alpha(y,x)) : x\in X\} = \{ \alpha(y',x) - \alpha(y, \alpha(y',x)) : x\in X\} \leq A_y \leq A,
	\]
	which implies that  $A \rtimes_\alpha \{e_Y\}$ is normal in $G$.
Let $\chi:G\to (X/A) \times Y$ be defined by $\chi(x,y)=(\phi(x),y)$, where $\phi:X\to X/A$ is the canonical map. Since $\phi$ is a continuous open surjection it follows that $\chi$ is a continuous open surjection. Moreover, the definitions of $\phi$ and $A$    imply that  $\chi$ is also homomorphism. It is easy to see that $\ker\chi=A \rtimes_\alpha \{e_Y\}$, so  $G \big/(A \rtimes_\alpha \{e_Y\})\cong(X/A) \times Y$ is abelian and $G'\leq A \rtimes_\alpha \{e_Y\}$.
\end{proof}

Recall that the topological group $(C_p,\cdot)$ is isomorphic to the group $(\Z_p,+)$ (essentially, via the $p$-adic logarithm), so  the closed subgroups of $(C_p,\cdot)$ are totally ordered, have the form $C_p^{p^n}$, for  $n \in \N$, and have been described in (\ref{Cppn:form}).

Now we apply  Lemma \ref{claim:gender} to the case when $(C_p^{p^n},\cdot)$, viewed as a subgroup of the automorphism group $\Aut(\Z_p)$, acts  on $\Z_p$ via the natural  action by multiplication. Recall that we denote by $M_{p,n}= \Z_p \rtimes C_p^{p^n}$ the semidirect product arising this way.
\begin{lemma}\label{lemma:speder}

For the group $M_{p,n}= \Z_p \rtimes C_p^{p^n}$, the following hold:
\begin{itemize}
\item if $p>2$, then $M_{p,n}' = p^{n+1}\Z_p \rtimes \{1\}$ and $M_{p,n} / M_{p,n}' \cong \Z(p^{n+1})\times C_p^{p^n}$;
\item if $p=2$, then $M_{2,n}' = 2^{n+2}\Z_2 \rtimes \{1\}$ and $M_{2,n} / M_{2,n}' \cong \Z(2^{n+2})\times C_2^{2^n}$.
\end{itemize}
\end{lemma}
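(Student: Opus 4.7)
The plan is to apply Lemma \ref{claim:gender} directly with $X = (\Z_p,+)$, $Y = (C_p^{p^n},\cdot)$, and $\alpha \colon Y \times X \to X$ the natural multiplication action $\alpha(y,x) = yx$. Both $X$ and $Y$ are abelian, so the hypotheses are met, and the derived subgroup $M_{p,n}'$ equals $A \rtimes_\alpha \{1\}$ with
\[
A = \langle x - yx : x \in \Z_p,\ y \in C_p^{p^n} \rangle = \langle (1-y)x : x \in \Z_p,\ y \in C_p^{p^n}\rangle,
\]
and the abelianization is $M_{p,n}/M_{p,n}' \cong (\Z_p/A) \times C_p^{p^n}$.

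The remaining step is to compute $A$ explicitly. Since $A$ is generated as a subgroup of $\Z_p$ by the set of products $(1-y)x$ with $x \in \Z_p$ and $y \in C_p^{p^n}$, and since $\Z_p$ is a principal ideal ring in which every closed (equivalently, every) subgroup is of the form $p^k\Z_p$, the subgroup $A$ coincides with the ideal $I\Z_p$, where $I = \{1-y : y \in C_p^{p^n}\}$. Appealing to the explicit description of $C_p^{p^n}$ recalled in \eqref{Cppn:form}, one has $1 - C_p^{p^n} = p^{n+1}\Z_p$ if $p>2$, and $1 - C_2^{2^n} = 2^{n+2}\Z_2$ if $p=2$. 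Therefore
\[
A = \begin{cases} p^{n+1}\Z_p & \text{if } p>2,\\ 2^{n+2}\Z_2 & \text{if } p = 2. \end{cases}
\]

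Plugging this into Lemma \ref{claim:gender} gives $M_{p,n}' = p^{n+1}\Z_p \rtimes \{1\}$ and $M_{p,n}/M_{p,n}' \cong \Z(p^{n+1}) \times C_p^{p^n}$ when $p>2$, and the analogous statement with exponent $n+2$ when $p=2$, as claimed. The main (very minor) obstacle is just bookkeeping the index shift between the cases $p>2$ and $p=2$, which comes from the asymmetric definition of $C_p$ in Notation \ref{new:notation}; no genuine group-theoretic difficulty arises, since both pieces of the computation reduce to the observation that multiplication by a single unit $x\in \Z_p^*$ already sweeps out the entire ideal $(1-y)\Z_p$.
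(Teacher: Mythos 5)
Your proposal is correct and follows essentially the same route as the paper: apply Lemma \ref{claim:gender} with $X=(\Z_p,+)$ and $Y=C_p^{p^n}$, then identify $A=\langle (1-y)x\rangle$ with $p^{n+1}\Z_p$ (resp.\ $2^{n+2}\Z_2$) via the explicit description of $C_p^{p^n}$ in \eqref{Cppn:form}. The paper's proof is just a terser version of the same computation, so there is nothing to add.
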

\begin{proof}
In the notation of Lemma \ref{claim:gender}, let $A= \langle (1-y)x: x\in \Z_p , y\in C_p^{p^n}\rangle$.

By (\ref{Cppn:form}), one immediately obtains that if $p>2$, then $A = p^{n+1}\Z_p$, while $A = 2^{n+2}\Z_2$ otherwise.
\end{proof}

In the following remark we give the explicit isomorphisms stated in the above Lemma \ref{lemma:speder}. 
\begin{remark}\label{rem:last}
Using the proof of Lemma \ref{claim:gender} we can write explicitly the isomorphisms in Lemma \ref{lemma:speder}.
Let $p$ be a prime and $n\in \N$. The isomorphism $\widetilde{\psi}$ satisfies the equality  $\widetilde{\psi}((x,y)M_{p,n}')=\psi(x,y) $ for every $(x,y)\in M_{p,n}$, where:
\begin{itemize}
	\item if $p>2$, $\psi:M_{p,n}\to \Z(p^{n+1})\times C_p^{p^n}$ is defined by $\psi(x,y)=(x\mod p^{n+1},y )$;
	\item if $p=2$, $\psi:M_{2,n}\to \Z(2^{n+2})\times C_2^{2^n}$ is defined by $\psi(x,y)=(x\mod 2^{n+2},y )$.
\end{itemize}
In other words, we have $\psi=\widetilde{\psi}\circ q$, where $q:M_{p,n}\to M_{p,n}/M_{p,n}'$ is the canonical map.
\end{remark}

Obviously, if two groups $M_{p,n}$, $M_{p',n'}$ are isomorphic, then $p = p'$. Under this assumption, we now prove that also $n = n'$.
\begin{corol}\label{thm:pair}
For $n \in \N$, the subgroups $M_{p,n}= \Z_p\rtimes C_p^{p^n}$ of $M_{p,0}=M_p= \Z_p\rtimes C_p$ are pairwise non-isomorphic.
\end{corol}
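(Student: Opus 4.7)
The natural approach is to exploit the computation of the derived subgroup already obtained in Lemma \ref{lemma:speder}: since $M_{p,n}'$ is closed in $M_{p,n}$, a topological group isomorphism $M_{p,n} \cong M_{p,m}$ would induce a topological isomorphism of the abelianizations $M_{p,n}/M_{p,n}' \cong M_{p,m}/M_{p,m}'$. It therefore suffices to show that these abelianizations are pairwise non-isomorphic for distinct $n$.

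First I would invoke Lemma \ref{lemma:speder} to write
\[
M_{p,n}/M_{p,n}' \cong \mathbb{Z}(p^{n+1}) \times C_p^{p^n} \quad (p>2), \qquad M_{2,n}/M_{2,n}' \cong \mathbb{Z}(2^{n+2}) \times C_2^{2^n}.
\]
Since $C_p^{p^n} \cong (\mathbb{Z}_p,+)$ as topological groups, the second factor is torsionfree in each case. Hence the torsion subgroup of $M_{p,n}/M_{p,n}'$ is precisely the finite cyclic factor, namely $\mathbb{Z}(p^{n+1})$ if $p>2$ and $\mathbb{Z}(2^{n+2})$ if $p=2$. An isomorphism of abelianizations restricts to an isomorphism of torsion subgroups, and from the cardinality of this torsion subgroup one immediately recovers $n$. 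This forces $n=m$.

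The only subtlety is to make sure one is comparing $M_{p,n}$ with $M_{p,n'}$ for the \emph{same} prime $p$, but this is built into the statement (all the $M_{p,n}$ live inside the fixed group $M_p$). No step seems to present any real obstacle; the whole argument reduces to reading off the invariant $n$ from the torsion part of the abelianization supplied by Lemma \ref{lemma:speder}.
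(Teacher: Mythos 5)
Your proposal is correct and follows essentially the same route as the paper: an isomorphism $M_{p,n}\cong M_{p,m}$ carries derived subgroups to derived subgroups, hence induces an isomorphism of the abelianizations computed in Lemma \ref{lemma:speder}, and comparing their torsion subgroups ($\Z(p^{n+1})$ for $p>2$, $\Z(2^{n+2})$ for $p=2$) forces $n=m$.
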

\begin{proof}
Let $n,m \in \N$, and assume that $\psi: M_{p,n} \to M_{p,m}$ is an isomorphism. Then $\psi(M_{p,n}') = M_{p,m}'$, and
$\psi$ induces an isomorphism $\bar \psi: A \to B$, where $A = M_{p,n}/M_{p,n}'$, and $B = M_{p,m}/M_{p,m}'$.

By Lemma \ref{lemma:speder}, comparing the torsion subgroups of $A$ and $B$, we obtain $\Z(p^{n+1}) \cong \Z(p^{m+1})$ when $p>2$, or  $\Z(2^{n+2})  \cong \Z(2^{m+2})$ when $p=2$. In any case, $n = m$.
\end{proof}

The following result is the counterpart of Lemma \ref{lemma:speder} for the groups $T_n$.
\begin{lemma}\label{commTn}
For every $n\in \N$, the commutator subgroup of the group $T_n$ is $T_n'= 2\Z_2\rtimes\{1\}$, and the quotient group $T_n / T_n'$ is isomorphic to $\Z(2) \times C_2^{2^n}$.
\end{lemma}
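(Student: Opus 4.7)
The plan is to apply Lemma \ref{claim:gender} to the semidirect product $T_n = (\Z_2, +) \rtimes_{\beta_n} C_2^{2^n}$. Both factors are abelian and $\beta_n$ is a continuous action by automorphisms (as recorded in Example \ref{Exaaa}(ii)), so the lemma applies with $X = \Z_2$, $Y = C_2^{2^n}$, and $\alpha = \beta_n$. It yields $T_n' = A \rtimes \{1\}$ and $T_n/T_n' \cong (\Z_2/A) \times C_2^{2^n}$, where
\[
A = \langle\, x - \beta_n(y,x) : x \in \Z_2,\ y \in C_2^{2^n}\,\rangle \leq (\Z_2,+).
\]
The proof then reduces to identifying $A$, and I claim $A = 2\Z_2$.

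To compute $A$ I would partition the generators according to the two cases in the definition of $\beta_n$, using the explicit formulas $C_2^{2^n} = 1 + 2^{n+2}\Z_2$ and $C_2^{2^{n+1}} = 1 + 2^{n+3}\Z_2$ recorded in (\ref{Cppn:form}). For $y \in C_2^{2^{n+1}}$ one has $x - \beta_n(y,x) = (1-y)x$ with $1-y \in 2^{n+3}\Z_2$, so these generators all lie in $2^{n+3}\Z_2 \subseteq 2\Z_2$. For $y \in C_2^{2^n} \setminus C_2^{2^{n+1}}$ one can write $y = 1 + 2^{n+2}u$ with $u \in \Z_2^*$, so that
\[
x - \beta_n(y,x) = x + yx = (1+y)x = 2\bigl(1 + 2^{n+1}u\bigr)\,x.
\]

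The crucial observation is that $1 + 2^{n+1}u$ is odd, hence a unit of $\Z_2$; thus fixing any single such $y$ and letting $x$ range over $\Z_2$ already sweeps out every element of $2\Z_2$. Combined with the inclusion from the first case, this gives $A = 2\Z_2$, whence $T_n' = 2\Z_2 \rtimes \{1\}$ and $T_n/T_n' \cong (\Z_2/2\Z_2) \times C_2^{2^n} = \Z(2) \times C_2^{2^n}$, as claimed.

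The only genuinely delicate step is the valuation drop in the second case: the sign twist built into $\beta_n$ forces the $2$-adic valuation of $1+y$ down from the ``naively expected'' value $n+3$ to exactly $1$. This is precisely what distinguishes $T_n$ from its index-two subgroup $M_{2,n+1}$, whose derived subgroup is only $2^{n+3}\Z_2 \rtimes \{1\}$ by Lemma \ref{lemma:speder}, and it is the reason the quotient $T_n/T_n'$ carries only a $\Z(2)$ factor on the first coordinate.
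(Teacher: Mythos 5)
Your proposal is correct and follows essentially the same route as the paper: both apply Lemma \ref{claim:gender} and then compute $A=2\Z_2$ by splitting the generators according to the two branches of $\beta_n$, with the key point being that for $y\in C_2^{2^n}\setminus C_2^{2^{n+1}}$ one has $1+y=2\cdot(\text{unit})$, so a single such $y$ already produces all of $2\Z_2$ (the paper phrases this by picking $t=2+2^{n+2}$ in the coset $2+2^{n+2}+2^{n+3}\Z_2$). Your valuation bookkeeping and the contrast with $M_{2,n+1}'=2^{n+3}\Z_2\rtimes\{1\}$ are accurate.
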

\begin{proof}

Let $A=\langle x-\beta(y,x): x\in \Z_2, \ y\in C_2^{2^{n}} \rangle \leq \Z_2$.
In view of Lemma \ref{claim:gender}, we have to prove that $A=2\Z_2$. By the definition of $\beta$ we have  $A=\langle V, W\rangle$, where $V=\langle x-yx: x\in \Z_2, \ y\in C_2^{2^{n+1}} \rangle \leq \Z_2$ and $W=\langle  x+yx: x\in \Z_2, \ y\in C_2^{2^{n}}\setminus C_2^{2^{n+1}}\rangle \leq \Z_2$.

Note that if $y \in C_2^{2^{n+1}}$, then $y \in 1 + 2\Z_2$, so $1-y \in 2 \Z_2$. In particular, $V=\langle (1-y)x: x\in \Z_2, \ y\in C_2^{2^{n+1}} \rangle \leq 2\Z_2$.

To study $W$, first observe that $C_2^{2^{n}}\setminus C_2^{2^{n+1}}=(1+2^{n+2}\Z_2)\setminus (1+2^{n+3}\Z_2)=1+2^{n+2}+2^{n+3}\Z_2$. Hence,
\[
W=\langle x(1+y):  x\in \Z_2, \ y\in 1+2^{n+2}+2^{n+3}\Z_2 \rangle =\langle xt:x\in \Z_2, \ t\in 2+2^{n+2}+2^{n+3}\Z_2\rangle \leq 2\Z_2.
\]

On the other hand,
$2\Z_2=(2+2^{n+2})\Z_2\leq W$, since $t=2+2^{n+2}\in 2+2^{n+2}+2^{n+3}\Z_2$.
It is now clear that $A=\langle V, W\rangle= W= 2\Z_2$, which completes the proof.
\end{proof}

\begin{prop}\label{prop:p=2}
For $n\in \N$, the groups $T_n$ are pairwise non-isomorphic.
\end{prop}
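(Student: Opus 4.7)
The plan is to show that $M_{2,n+1}$ is a characteristic subgroup of $T_n$, and then deduce the claim from Corollary \ref{thm:pair} (which already establishes that the $M_{2,k}$ are pairwise non-isomorphic).

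First I would work in the abelianization $A_n := T_n/T_n'$, which is isomorphic to $\Z(2) \times C_2^{2^n}$ by Lemma \ref{commTn}. Since $C_2^{2^n} \cong \Z_2$ is torsionfree, the torsion subgroup $t(A_n)$ coincides with the $\Z(2)$-factor, and the torsionfree quotient $A_n/t(A_n)$ is isomorphic to $\Z_2$. As an abstract group, $\Z_2$ admits a unique subgroup of index $2$ (any such subgroup necessarily contains $2\Z_2$ and corresponds to a proper subgroup of $\Z_2/2\Z_2 \cong \Z(2)$). Pulling this unique subgroup back through the projection $A_n \to A_n/t(A_n)$ produces a canonical, hence characteristic, index-$2$ subgroup $B_n \leq A_n$.

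Next I would identify $B_n$ with $M_{2,n+1}/T_n'$. Since $T_n' = 2\Z_2 \rtimes \{1\} \subseteq \Z_2 \rtimes C_2^{2^{n+1}} = M_{2,n+1}$, the quotient $M_{2,n+1}/T_n' \cong \Z(2) \times C_2^{2^{n+1}}$ is well-defined; it contains $t(A_n) = \Z(2) \times \{1\}$, and its image in $A_n/t(A_n) \cong C_2^{2^n}$ is the unique index-$2$ subgroup $C_2^{2^{n+1}}$. Thus $B_n = M_{2,n+1}/T_n'$. Since $T_n'$ is characteristic in $T_n$ and $B_n$ is characteristic in $A_n$, the preimage $M_{2,n+1}$ is a characteristic subgroup of $T_n$.

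Finally, any isomorphism $\varphi: T_n \to T_m$ must send the characteristic subgroup $M_{2,n+1}$ onto $M_{2,m+1}$, yielding $M_{2,n+1} \cong M_{2,m+1}$; by Corollary \ref{thm:pair} this forces $n+1 = m+1$, so $n = m$. I do not anticipate a genuine obstacle here: the only subtle point is the uniqueness of the abstract index-$2$ subgroup of $\Z_2$, which makes the characterization of $B_n$ invariant under \emph{abstract} automorphisms of $A_n$ and therefore transports across an isomorphism $T_n \cong T_m$.
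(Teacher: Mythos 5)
Your proposal is correct. It has the same skeleton as the paper's proof: both arguments reduce the statement to showing that any isomorphism $T_n \to T_m$ must carry $M_{2,n+1}$ onto $M_{2,m+1}$, and both then conclude $n=m$ from Corollary \ref{thm:pair}; both also rest on Lemma \ref{commTn} identifying $T_n' = 2\Z_2 \rtimes \{1\}$. Where you differ is in how the invariance of $M_{2,n+1}$ is established. The paper argues by explicit computation: it uses the projection $\pi_2$ onto $C_2^{2^m}$, shows $\pi_2(\psi(\Z_2\rtimes\{1\}))=1$ via a square-root argument in the torsionfree group $C_2^{2^m}$, and then checks element-wise that $(a,b^2)$ lands in $M_{2,m+1}$. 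You instead characterize $M_{2,n+1}$ intrinsically, as the preimage under the abelianization map of the subgroup $B_n = t(A_n)\cdot A_n^2 \le A_n$ (your description via the unique index-$2$ subgroup of the torsionfree quotient $A_n/t(A_n)\cong\Z_2$ amounts to the same thing), which is manifestly preserved by any abstract isomorphism. This buys a cleaner, coordinate-free justification of the key step and makes explicit that only abstract (not topological) isomorphisms need be excluded, while the paper's computation is more elementary and self-contained, needing nothing beyond the torsionfreeness of $C_2^{2^m}$ and the displayed semidirect-product coordinates. The small points you rely on all check out: $\Z_2$ has a unique index-$2$ subgroup even abstractly, the image of $M_{2,n+1}$ in $A_n \cong \Z(2)\times C_2^{2^n}$ is $\Z(2)\times C_2^{2^{n+1}}$, and the proof of Corollary \ref{thm:pair} is purely algebraic, so it applies to the abstract isomorphism $M_{2,n+1}\cong M_{2,m+1}$ you produce.
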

\begin{proof} Assume that there exists a topological isomorphism $\psi: T_n\to T_m$. Let $\pi_2:T_m\to C_2^{2^m}$ be the projection  on the second coordinate. We first show that $\pi_2(\psi(\Z_2\rtimes\{1\}))=1$. If $a\in \Z_2$ and $\pi_2(\psi(a,1))=c$, then $\pi_2(\psi(2a,1))=c^2$. By Lemma \ref{commTn}, $T_n'=T_m' = 2\Z_2\rtimes\{1\}$, which implies that $\psi(2\Z_2\rtimes\{1\})=2\Z_2\rtimes\{1\}$  and $\pi_2(\psi(2\Z_2\rtimes\{1\}))=1$. As $(2a,1)\in 2\Z_2\rtimes\{1\}$, we deduce that $c^2=1$. It follows that $c=1$, since $C_2^{2^m}$ is torsionfree.

Consider the subgroups $M_{2,n+1}=\Z_2\rtimes_{\beta_n}C_2^{2^{n+1}}$ and $M_{2,m+1}=\Z_2\rtimes_{\beta_m}C_2^{2^{m+1}}$ of $T_n$ and $T_m$, respectively. We will prove that $\psi(M_{2,n+1})=M_{2,m+1}$. Since $\psi^{-1}$ is also a topological isomorphism, it suffices to show that $\psi(M_{2,n+1})\leq M_{2,m+1}$.

For this aim we prove that $\pi_2(\psi(M_{2,n+1}))\leq C_2^{2^{m+1}}$. Note that an element of $M_{2,n+1}$ has the form $(a,b^2)$, where $a\in \Z_2$ and $b\in C_2^{2^n}$. Clearly, $\pi_2(\psi(0,b))\in C_2^{2^m}$, so $$\pi_2(\psi(a,b^2))=\pi_2(\psi(a,1))\pi_2(\psi(0,b))^2= 1\cdot \pi_2(\psi(0,b))^2\in C_2^{2^{m+1}}.$$  Hence, $\pi_2(\psi(M_{2,n+1}))\leq C_2^{2^{m+1}}$, which means that $\psi(M_{2,n+1})\leq M_{2,m+1}$. By Corollary \ref{thm:pair}, we deduce that $m=n$.
\end{proof}

It is not difficult to see that if $T_n \cong M_{p,m}$ for some $n,m,p$, then $p=2$. In the next result we apply Lemma \ref{commTn} to prove also that a group $T_n$ is not isomorphic to any of the groups $M_{2,m}$.
\begin{prop}\label{Tn-Mpn:non-isom}
For every $n,m\in \N$, and every prime number $p$, the groups $T_n$ and $M_{p,m}$ are not isomorphic.
\end{prop}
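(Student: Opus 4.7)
The plan is to argue by contradiction: suppose an isomorphism $\psi: T_n \to M_{p,m}$ exists. Since any group isomorphism carries the derived subgroup onto the derived subgroup, $\psi$ descends to an isomorphism
\[
\overline{\psi}: T_n/T_n' \stackrel{\sim}{\longrightarrow} M_{p,m}/M_{p,m}'
\]
of abelianizations. The explicit forms of both quotients are already at our disposal: by Lemma \ref{commTn}, $T_n/T_n' \cong \Z(2) \times C_2^{2^n}$, while by Lemma \ref{lemma:speder}, $M_{p,m}/M_{p,m}'$ is isomorphic to $\Z(p^{m+1}) \times C_p^{p^m}$ when $p$ is odd, and to $\Z(2^{m+2}) \times C_2^{2^m}$ when $p=2$.

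The key step is then to compare the torsion subgroups of the two abelianizations. Because each group $C_p^{p^k}$ is (topologically, hence abstractly) isomorphic to $\Z_p$, it is torsionfree. Consequently the torsion subgroup of $T_n/T_n'$ is exactly $\Z(2)$, whereas the torsion subgroup of $M_{p,m}/M_{p,m}'$ equals $\Z(p^{m+1})$ when $p>2$ and $\Z(2^{m+2})$ when $p=2$. Since $\overline{\psi}$ must restrict to an isomorphism between these torsion subgroups, the hypothetical $\psi$ forces $\Z(2) \cong \Z(p^{m+1})$ (which demands $p^{m+1}=2$, impossible for odd $p$) in the odd case, and $\Z(2)\cong \Z(2^{m+2})$ (which demands $m+2=1$, impossible for $m\in \N$) in the even case.

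There is no real obstacle once the two preceding lemmas are in hand: the whole argument is a clean invariant comparison, and the only mild care needed is to extract the torsion factor, which is immediate from the direct-product decomposition together with the torsionfreeness of $C_p^{p^k}\cong \Z_p$. The argument works for abstract group isomorphisms, so \emph{a fortiori} for topological ones, and it treats both the odd and the even prime uniformly.
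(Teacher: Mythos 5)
Your proposal is correct and follows essentially the same route as the paper: pass to the abelianizations via Lemmas \ref{commTn} and \ref{lemma:speder} and compare their torsion subgroups, obtaining $\Z(2)$ versus $\Z(p^{m+1})$ (resp.\ $\Z(2^{m+2})$), which are never isomorphic. Nothing is missing.
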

\begin{proof}
By contradiction, assume that $\psi: T_n \to M_{p,m}$ is an isomorphism. Similar to the proof of Corollary \ref{thm:pair}, $\psi$ induces an isomorphism between the torsion groups $t(T_n/T_{n}')$ and $t(M_{p,n}/M_{p,n}')$.
But $t(T_n/T_{n}')\cong \Z(2)$ by Lemma \ref{commTn}, while $t(M_{2,n}/M_{2,n}')\cong \Z(2^{n+2})$ and $t(M_{p,n}/M_{p,n}')\cong \Z(p^{n+1})$ when $p>2$ by Lemma \ref{lemma:speder}.
\end{proof}

In the sequel, we consider a faithful action $\alpha: \Z_p \times \Z_p \to \Z_p$ of $\Z_p$ on $\Z_p$, and the semidirect product $M_{p,\alpha}= (\Z_p,+)\rtimes_{\alpha} (\Z_p,+)$ arising this way.
Recall that $\Aut(\Z_p) \cong \Z_p^*$, as every $\phi\in \Aut(\Z_p)$, has the form $\phi(x)=m\cdot x$ for $m =\phi(1) \in \Z_p^*$; identifying $\phi$ with $\phi(1)$, the action $\alpha$ gives a group monomorphism $f:(\Z_p,+)\to \Z_p^*$ such that $\alpha(y,x)= f(y)\cdot x$.
\begin{prop}\label{prop:alpha}
For a prime $p$, consider the semidirect product $M_{p,\alpha}= (\Z_p,+)\rtimes_{\alpha} (\Z_p,+)$, where $\alpha$ is a faithful action.
\begin{itemize}
\item If $p >2$, then $M_{p,\alpha} \cong M_{p,n}$ for some $n\in \N$.
\item If $p =2$, then either $M_{p,\alpha} \cong M_{p,n}$, or $M_{p,\alpha} \cong T_n$, for some $n\in \N$.
\end{itemize}
\end{prop}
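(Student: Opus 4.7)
The strategy is to translate the action $\alpha$ into the injective homomorphism $f:(\Z_p,+)\to \Z_p^*$ with $\alpha(y,x)=f(y)\cdot x$ (as observed just before the statement), and then exploit the splitting $\Z_p^* \cong C_p \times F_p$ to identify the image of $f$ up to automorphism.

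First, I would compose $f$ with the projection $\pi:\Z_p^*\to F_p$. Since $F_p$ is finite cyclic and $\Z_p$ is pro-$p$, the map $\pi\circ f:\Z_p\to F_p$ has open kernel of $p$-power index, so its image is a $p$-group. When $p>2$, $|F_p|=p-1$ is coprime to $p$, forcing $\pi\circ f$ to be trivial, hence $f(\Z_p)\subseteq C_p$. When $p=2$, either $\pi\circ f$ is trivial (so again $f(\Z_2)\subseteq C_2$) or $\pi\circ f$ is surjective with kernel $2\Z_2$ (the unique index-$2$ open subgroup of $\Z_2$).

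\emph{Case 1 (image in $C_p$).} Here $f:\Z_p\to C_p$ is a continuous injective homomorphism from a compact group to a Hausdorff one, hence a topological embedding with closed image. Since $C_p\cong (\Z_p,+)$, the image must be of the form $C_p^{p^n}$ for some $n\in\N$, and $f$ induces a topological isomorphism $\bar f:\Z_p\to C_p^{p^n}$. The map
\[
\phi:M_{p,\alpha}\to M_{p,n},\qquad \phi(x,y)=(x,\bar f(y)),
\]
is then a topological group isomorphism, since in both groups the action of the second coordinate on the first is multiplication by $\bar f(y)\in C_p^{p^n}\subseteq \Z_p^*$. This handles $p>2$ completely and yields the first alternative when $p=2$.

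\emph{Case 2 ($p=2$ and $f(\Z_2)\not\subseteq C_2$).} Write uniquely $f(y)=\epsilon(y)c(y)$ with $\epsilon(y)\in F_2=\{\pm 1\}$ and $c(y)\in C_2$. Since $\epsilon=\pi\circ f$ is a homomorphism, so is $c:\Z_2\to C_2$; and $c$ is injective because $\Z_2$ is torsionfree while $\ker c$ would map into the torsion set $\{\pm 1\}$ under $f$. As in Case 1, $c(\Z_2)=C_2^{2^n}$ for some $n\in\N$, and $c$ sends the index-$2$ subgroup $2\Z_2=\ker\epsilon$ onto the unique index-$2$ subgroup of $C_2^{2^n}$, namely $C_2^{2^{n+1}}$. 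Define
\[
\phi:M_{2,\alpha}\to T_n,\qquad \phi(x,y)=(x,c(y)).
\]
Checking the homomorphism property reduces to the identity $\epsilon(y)c(y)\cdot x'=\beta_n(c(y),x')$, which holds precisely because $c(y)\in C_2^{2^{n+1}}$ iff $y\in 2\Z_2$ iff $\epsilon(y)=1$, matching the two cases in the definition of $\beta_n$. Hence $\phi$ is a topological isomorphism.

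\textbf{Main obstacle.} The delicate step is the alignment in Case 2: one must verify that the preimage-of-$C_2$ subgroup $2\Z_2$ is carried by $c$ precisely onto the subgroup $C_2^{2^{n+1}}$ whose complement is where $\beta_n$ introduces the sign twist. This is the reason the exotic family $T_n$ appears only at $p=2$; for $p>2$ the obstruction vanishes because $F_p$ has order coprime to $p$, so no non-trivial twist by $F_p$ is compatible with a $\Z_p$-action.
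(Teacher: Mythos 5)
Your proof is correct and follows essentially the same route as the paper: translate $\alpha$ into the monomorphism $f$ into $\Z_p^*=C_pF_p$, split according to whether $f(\Z_p)\subseteq C_p$, and in the exceptional case (forced to be $p=2$) replace $f$ by its $C_2$-component $c$ (the paper's $\tilde f$), identify its image with $C_2^{2^n}$, and check that the sign twist in $\beta_n$ exactly absorbs the $F_2$-component. The only differences are cosmetic: you rule out a nontrivial $F_p$-component for $p>2$ via the $p$-power index of the kernel of $\pi\circ f$ (the paper uses $(p-1)$-divisibility of $\Z_p$), and you pin down $\ker(\pi\circ f)=2\Z_2$ by uniqueness of the index-$2$ subgroup rather than by the paper's density argument.
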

\begin{proof}
Since $\alpha$ is faithful, there is a group monomorphism $f:(\Z_p,+)\to \Z_p^*= C_pF_p$ such that $\alpha(y,x)= f(y)\cdot x$. Now we consider two cases, depending on whether the image of $f$ is contained in $C_p $ or not.

If it is, then $f:(\Z_p,+)\to C_p$   is continuous when we equip these two copies of $(\Z_p,+)$ with the $p$-adic topology. So $f(\Z_p)= C_p^{p^n}$ for some $n\in \N$, and $f: (\Z_p,+)\to C_p^{p^n}$ is a topological isomorphism.

We define $\phi:(\Z_p,+)\rtimes_{\alpha} (\Z_p,+)\to (\Z_p,+)\rtimes (C_p^{p^n},\cdot)$ by $(x,y)\to (x,f(y))$. To prove $\phi$ to be a topological isomorphism, it remains only to check it is a homomorphism, as follows:
\begin{equation}\label{homomorphism}
\begin{split}
\phi((x_1,y_1)(x_2,y_2))=\phi(x_1+\alpha(y_1,x_2), y_1y_2)=\phi(x_1+f(y_1)\cdot x_2, y_1y_2)=(x_1+f(y_1)\cdot x_2, f(y_1)f(y_2)),
\\
\phi(x_1,y_1)\phi(x_2,y_2)=(x_1,f(y_1))(x_2,f(y_2))=(x_1+f(y_1)\cdot x_2, f(y_1)f(y_2)).
\end{split}
\end{equation}

Now we consider the case when  $f(\Z_p) \nsubseteq C_p$.
First we see that this happens only if $p=2$; indeed this follows from the fact that  if $p>2$ then $(\Z_p,+)$ is $(p-1)$-divisible, while $F_p$ has cardinality $p-1$.
So we have  $p=2$ and $f:(\Z_2,+)\to \Z_2^* =C_2 F_2$ such that $\alpha(y,x)= f(y)\cdot x$ and $f(\Z_2) \nsubseteq C_2$.
Recall that  $C_2=1+4\Z_2$ and  $F_2 =\{ 1,-1 \}$.

Equipping $C_2 \cong \Z_2$ with the $2$-adic topology, $F_2$ with the discrete topology, and the codomain of $f$ with the product topology, it is easy to see that $f: (\Z_2,+)\to C_2 \cdot F_2$ is continuous,
so $f(1)\in (-1)\cdot C_2$.

Consider the (continuous) canonical projection $\pi: C_2 \cdot F_2 \to C_2$, and call $\tilde f$ the composition map $\tilde f = \pi \circ f : \Z_2 \to C_2$. Then $\tilde f$ is a continuous homomorphism, $\tilde f (1)=-f(1)$, and it is easy to see that
\begin{equation*}
\tilde f(y) =
\begin{cases}
f(y) & \text{ if } y\in 2\Z_2,\\
-f(y) & \text{ if } y \in \Z_2 \setminus 2\Z_2.
\end{cases}
\end{equation*}
Since $\Z_2$ is compact, there is $n\in \N$ such that $\tilde f(\Z_2)=C_2^{2^n}=(1+4\Z_2)^{2^n}$, and we now prove that $M_{2,\alpha}$ is isomorphic to $T_n$.

Let $\phi:M_{2,\alpha}\to T_{n}$ be defined by $\phi(x,y)=(x,\tilde f(y))$.
As $\tilde f: (\Z_2,+) \to C_2^{2^n}$ is a topological group isomorphism we deduce that $\phi$ is homeomorphism. Let us show that $\phi$ is also a homomorphism. If $(x_1,y_1), (x_2,y_2)\in M_{2,\alpha}$, then
\[
\phi(x_1,y_1) \phi (x_2,y_2)=(x_1,\tilde f(y_1))(x_2,\tilde f(y_2))=
( x_1+\beta_n \big(\tilde f(y_1),x_2 \big) , \tilde f(y_1)\tilde f(y_2) ).
\]
On the other hand, since $\tilde f$ is a homomorphism we have
\[
\phi((x_1,y_1)(x_2,y_2))= \phi(x_1+f(y_1) \cdot x_2,y_1+y_2)=
(x_1+f(y_1) \cdot x_2,\tilde f(y_1+y_2))=(x_1+f(y_1) \cdot x_2,\tilde f(y_1)\tilde f(y_2) ).
\]

To finish the proof, we now check that $\beta_n(\tilde f(y_1),x_2) = f(y_1) \cdot x_2$.

If $y_1\in 2\Z_2$, then $\tilde f(y_1)=f(y_1)$ and also $\tilde f(y_1)\in C_2^{2^{n+1}}$, so
$\beta_n(\tilde f(y_1),x_2)=\tilde f(y_1)\cdot x_2=f(y_1) \cdot x_2$.

If $y_1 \in \Z_2 \setminus 2\Z_2$, then $\tilde f(y_1)=-f(y_1)$, and moreover $\tilde f(y_1)\in C_2^{2^{n}} \setminus C_2^{2^{n+1}}$, so  $\beta_n(\tilde f(y_1),x_2)=-\tilde f(y_1)\cdot x_2=f(y_1)\cdot x_2$.
\end{proof}

In the following lemma we describe the faithful actions of a finite group on the $p$-adic integers.

\begin{lemma}\label{lem:uonique}
Let $\alpha: H\times \Z_p\to \Z_p$ be a faithful action of a finite group $H$ on $\Z_p$. Then $H$ is isomorphic to a subgroup $F$ of $F_p$, and $G=(\Z_p,+)\rtimes_{\alpha} H$ is topologically isomorphic to $K_{p,F}$.
\end{lemma}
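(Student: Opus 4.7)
The plan is to follow the pattern of Proposition \ref{prop:alpha}, in a setting made considerably simpler by the finiteness of $H$. First, identifying $\Aut(\Z_p)$ with $\Z_p^*$ via $\phi \mapsto \phi(1)$, the faithfulness of $\alpha$ yields a group monomorphism $f\colon H \to \Z_p^*$ satisfying $\alpha(h,x) = f(h)\cdot x$ for all $h\in H$ and $x \in \Z_p$.

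Next I would locate $f(H)$ inside $\Z_p^*$. Since $H$ is finite, $f(H)$ consists entirely of torsion elements of $\Z_p^*$. Using the decomposition $\Z_p^* \cong C_p \times F_p$ recalled in Notation \ref{new:notation}, together with the fact that $C_p \cong (\Z_p,+)$ is torsionfree, the torsion subgroup of $\Z_p^*$ coincides exactly with $F_p$. Consequently $f(H) \leq F_p$; setting $F := f(H)$, the map $f\colon H \to F$ is a group isomorphism, proving the first assertion $H \cong F \leq F_p$.

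For the topological isomorphism $G \cong K_{p,F}$ I would define $\phi\colon G \to K_{p,F}$ by $\phi(x,h) = (x, f(h))$. Since $H$ is finite and Hausdorff it is discrete, so $f\colon H \to F$ is automatically a topological isomorphism of discrete groups and hence $\phi$ is a homeomorphism. The very same computation as in \eqref{homomorphism} of Proposition \ref{prop:alpha} verifies that $\phi$ is a group homomorphism, since
\[
\phi((x_1,h_1)(x_2,h_2)) = (x_1 + f(h_1)\cdot x_2,\, f(h_1)f(h_2)) = \phi(x_1,h_1)\phi(x_2,h_2).
\]

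There is essentially no obstacle here: the only point requiring thought is identifying the torsion subgroup of $\Z_p^*$, and once that is done the rest of the argument is a direct transcription of (the easier branch of) the proof of Proposition \ref{prop:alpha}.
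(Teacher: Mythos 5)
Your proposal is correct and follows essentially the same route as the paper: faithfulness yields the monomorphism $f\colon H\to\Z_p^*$, finiteness of $H$ places $F=f(H)$ inside the torsion subgroup $F_p$, and the map $(x,h)\mapsto(x,f(h))$ is checked to be a topological isomorphism via the computation in \eqref{homomorphism}. Your explicit identification of the torsion subgroup of $\Z_p^*$ with $F_p$ (via $\Z_p^*\cong C_p\times F_p$ with $C_p$ torsionfree) merely spells out a step the paper leaves implicit.
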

\begin{proof}
Since $\alpha$ is faithful, there is a group monomorphism $f:H\to \Z_p^*$ such that $\alpha(h,x)= f(h)\cdot x$. As $H$ is finite, its image $F$ is contained in the torsion subgroup $F_p$ of $\Z_p^*$.

Consider the map $\phi: G\to K_{p,F}$ defined by $\phi(a,b)=(a,f(b)) $ for every $(a,b)\in K_{p,F}$.
Following the argument in (\ref{homomorphism}), one can verify that $\phi$ is a topological isomorphism.
\end{proof}

\section{Proof of Theorem D}\label{Proof of Theorem D}
In this section we prove  that infinite locally compact  solvable $\hm$  groups are metabelian (see Proposition \ref{prop:ms}). Then we  use it  to classify all locally compact solvable $\hm$  groups.

We start this section with two general results we use in the sequel. Especially Lemma \ref{lem:difp} will be used in Theorem \ref{thm:charmeta}, Proposition \ref{prop:tri}, Proposition \ref{prop:dich} and Theorem \ref{prop:nonfree}.

\begin{lemma} \label{lem:meta}
Let $G$ be an 	infinite \HM \ locally compact solvable group of class $n>1$. Then, $\overline{G^{(n-1)}}$ is isomorphic to $\Z_p$ for some prime $p$.
\end{lemma}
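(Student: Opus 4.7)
The plan is to reduce the statement to the abelian case already handled in Corollary \ref{cor:spr}, by showing that $\overline{G^{(n-1)}}$ is an infinite abelian, locally compact, \HM\ group.

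First I would check that $G$ must be non-discrete. Indeed, every subgroup of the solvable group $G$ is solvable (hence locally solvable), so if $G$ were discrete then Lemma \ref{lem:tnlf} would be violated, since an infinite discrete \HM\ group is not locally solvable. Thus $G$ is a non-discrete locally compact \HM\ group, and Proposition \ref{prop:ndhmlc}(2) applies: $G$ satisfies property $(\mathcal{N}_{fn})$, i.e., $G$ has no finite non-trivial normal subgroup.

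Next I would look at $H := \overline{G^{(n-1)}}$. Since $G$ has solvability class exactly $n$, the subgroup $G^{(n-1)}$ is non-trivial and abelian (as $(G^{(n-1)})' = G^{(n)} = \{e\}$), and it is characteristic, in particular normal, in $G$. Taking closure, $H$ is closed, normal, and still abelian (the closure of an abelian subgroup of a Hausdorff group is abelian). Hence $H$ is a locally compact abelian subgroup of $G$, and as a subgroup of the \HM\ group $G$ it is itself \HM.

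It remains to show that $H$ is infinite: otherwise $H$ would be a finite non-trivial normal subgroup of $G$, contradicting $(\mathcal{N}_{fn})$. So $H$ is an infinite \HM\ locally compact abelian group, and Corollary \ref{cor:spr} gives $H \cong \Z_p$ for some prime $p$. The only mildly delicate point is the initial observation that $G$ cannot be discrete; everything else is a direct combination of earlier results.
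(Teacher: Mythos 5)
Your proposal is correct and follows essentially the same route as the paper: non-discreteness via Lemma \ref{lem:tnlf} (since solvable implies locally solvable), then property $(\mathcal{N}_{fn})$ from Proposition \ref{prop:ndhmlc}(2) to rule out finiteness, and finally Corollary \ref{cor:spr} applied to the closed abelian normal subgroup $\overline{G^{(n-1)}}$. The only cosmetic difference is that you apply $(\mathcal{N}_{fn})$ to the closure $\overline{G^{(n-1)}}$ while the paper applies it to $G^{(n-1)}$ itself; both are finite-or-infinite together, so this changes nothing.
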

\begin{proof}
Since $G$ is solvable  of class $n>1$, $G^{(n-1)}$ is a non-trivial normal abelian subgroup of $G$. By Lemma \ref{lem:tnlf}, $G$ is non-discrete.
 Hence, $G^{(n-1)}$ is infinite, by Proposition \ref{prop:ndhmlc}. Being infinite  \HM\ locally compact abelian group, $\overline{G^{(n-1)}}$ is isomorphic to $\Z_p$ for some prime $p$, by Corollary \ref{cor:spr}.
\end{proof}

Let $p$ be a prime. Recall that an abelian group $G$ is {\em $p$-divisible} if $pG=G$.
\begin{lemma} \label{lem:difp}
	Let $p$ and $q$ be distinct primes and $\alpha:\Z_q\times \Z_p\to \Z_p$ be a continuous action by automorphisms. Then  $\Z_p\rtimes_{\alpha} \Z_q$ is not \HLM.
\end{lemma}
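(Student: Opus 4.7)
The plan is to exhibit inside $G = \Z_p \rtimes_\alpha \Z_q$ a closed subgroup topologically isomorphic to $\Z_p \times \Z_q$, and then to deduce via Fact \ref{fac:ext} that such a group is not \HLM, whence $G$ itself is not \HLM.

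First I would analyse $\alpha$ through the associated continuous homomorphism $f : \Z_q \to \Aut(\Z_p) \cong \Z_p^* = C_p \times F_p$, where $C_p \cong (\Z_p,+)$ is pro-$p$ and $F_p$ is finite. Composing $f$ with the projection $\Z_p^* \to C_p$ yields a continuous homomorphism $\Z_q \to \Z_p$; its image is simultaneously a quotient of the pro-$q$ group $\Z_q$ (so either finite of $q$-power order or topologically isomorphic to $\Z_q$) and a closed subgroup of the torsionfree pro-$p$ group $\Z_p$ (so either trivial or topologically isomorphic to $\Z_p$). Since $p \neq q$, these two sets of options intersect only in the trivial group, so $f(\Z_q) \subseteq F_p$. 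As $F_p$ is finite, $K := \ker f$ is open in $\Z_q$ and hence topologically isomorphic to $\Z_q$. The restriction of $\alpha$ to $K$ is trivial, so the subgroup $\Z_p \rtimes_\alpha K$ of $G$ coincides with the direct product $\Z_p \times K \cong \Z_p \times \Z_q$.

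Next I would apply Fact \ref{fac:ext} to the closed abelian subgroup $H = \Z_p \times \Z_q \leq G$. Being non-discrete and totally disconnected, $H$ is not a (real) Lie group. Moreover, every open subgroup of $H$ has the form $p^a\Z_p \times q^b\Z_q \cong \Z_p \times \Z_q$, and no such group can be isomorphic to $\Z_{p'}$ for any prime $p'$: the copy of $\Z_p$ sitting inside it would force $p' = p$, but then $\Z_q$ would embed as a closed subgroup of $\Z_p$, which is impossible for $q \neq p$ since every non-trivial closed subgroup of $\Z_p$ is topologically isomorphic to $\Z_p$. Hence $H$ fails both alternatives in Fact \ref{fac:ext}(b), so $H$ is not \HLM; therefore some subgroup of $H$ is not locally minimal, and this same subgroup sits inside $G$, proving that $G$ is not \HLM.

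The main obstacle I anticipate is the first step, placing $f(\Z_q)$ inside $F_p$; once $\ker f$ is known to be open in $\Z_q$, the semidirect product collapses to a direct product on a large subgroup, and the rest becomes a clean appeal to the abelian classification in Fact \ref{fac:ext}. The structural input that makes everything work is the decomposition $\Z_p^* \cong C_p \times F_p$ together with the mismatch $p \neq q$, which forces the acting group to behave trivially on an open subgroup of $\Z_q$.
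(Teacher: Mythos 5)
Your proof is correct and follows essentially the same route as the paper: both identify the kernel $K=\ker\alpha$ as a copy of $\Z_q$, observe that $\Z_p\rtimes_{\alpha} K=\Z_p\times K\cong \Z_p\times\Z_q$ sits inside the given group, and conclude via Fact \ref{fac:ext} that this abelian subgroup (hence the whole group) is not \HLM. The only divergence is in how the kernel is shown to be large: the paper notes that a trivial kernel would embed the $p$-divisible group $\Z_q$ algebraically into $\Z_p^*=C_pF_p$, which has no infinite $p$-divisible subgroups, whereas you use continuity of the associated homomorphism $\Z_q\to \Z_p^*=C_p\times F_p$ to force its image into the finite group $F_p$, making $K$ open---a harmless variation of the same step.
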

\begin{proof}
	In case  $K=\ker\alpha$ is trivial, then $\Z_q$ is  algebraically isomorphic to a subgroup of $\Aut(\Z_p)\cong \Z_p^* $.
	This is impossible since $\Z_q$ is $p$-divisible,  while $\Z_p^*= C_p F_p$ contains no infinite $p$-divisible subgroups.
	Hence, $K$ is a non-trivial closed subgroup of $\Z_q$, so   isomorphic to $\Z_q$ itself.
	By Fact \ref{fac:ext}, the group $\Z_p\rtimes_{\alpha} K\cong \Z_p\times \Z_q$ is not \HLM.
\end{proof}

The following theorem provides a criterion for hereditary minimality in terms of properties of the closed subgroups of a  compact solvable group. We are going to use it in Example \ref{ex:padic:rtimes:F} to check that the groups $K_{p,F}$ are \HM.
\begin{thm}\label{thm:charmeta}
	Let $G$ be a  compact solvable group. The following conditions are equivalent:
	\ben
	\item $G$ is \HM;
	\item there exists a prime $p$, such that for every infinite closed subgroup $H$ of $G$ either $Z(H) = \{e\}$  or $H \cong \Z_p$;
	\item there exists a prime $p$, such that for every infinite closed subgroup $H$ of $G$ either $Z(H) = \{e\}$ or $Z(H) \cong \Z_p$.
	\een
\end{thm}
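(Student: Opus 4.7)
The plan is to prove the three conditions equivalent cyclically via $(1) \Rightarrow (2) \Rightarrow (3) \Rightarrow (1)$; the implication $(2) \Rightarrow (3)$ is immediate, since $H \cong \Z_p$ forces $Z(H) = H \cong \Z_p$.

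For $(1) \Rightarrow (2)$ the argument runs as follows. The claim is vacuous for $G$ finite; if $G$ is infinite abelian, Corollary \ref{cor:spr} gives $G \cong \Z_p$ and (2) is trivial; for $G$ infinite non-abelian solvable of derived length $n \geq 2$, Lemma \ref{lem:meta} supplies a prime $p$ with $\overline{G^{(n-1)}} \cong \Z_p$, and this $p$ witnesses (2). Indeed, for any infinite closed $H \leq G$ with $Z(H) \neq \{e\}$, the group $H$ is itself \HM, compact, solvable with non-trivial center, so Theorem C gives $H \cong \Z_q$ for some prime $q$. The matching $q = p$ is forced as follows: otherwise $H \cap \overline{G^{(n-1)}}$ would be a finite subgroup of the torsionfree $\Z_p$, hence trivial, and by normality of $\overline{G^{(n-1)}}$ the subgroup $\overline{G^{(n-1)}} \cdot H \cong \Z_p \rtimes \Z_q$ of $G$ would fail to be \HLM\ by Lemma \ref{lem:difp}, contradicting that the \HM\ (hence \HLM) group $G$ contains it.

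The implication $(3) \Rightarrow (1)$ is the main content; I plan to proceed in four steps and conclude by Theorem B. \emph{Step A:} $G$ is totally disconnected, for otherwise $c(G)$ would be a non-trivial compact connected solvable (hence abelian) group, giving $Z(c(G)) = c(G) \cong \Z_p$ by (3)---impossible, as $\Z_p$ is totally disconnected. \emph{Step B:} $G$ satisfies $(\mathcal{C}_{fn})$; for an infinite closed $K \leq G$ with a finite nontrivial normal subgroup $N$, taken minimal and thus abelian by solvability, if $Z(K) \cong \Z_p$ then the infinite closed abelian subgroup $N \cdot Z(K) = N \times Z(K) \leq G$ has torsion in its center, contradicting (3); while if $Z(K) = \{e\}$, then $N \subseteq Z(C_K(N))$ (since $N$ is abelian), so (3) gives $Z(C_K(N)) \cong \Z_p$, making the finite $N$ a trivial subgroup of $\Z_p$. \emph{Step C:} $G$ is \HLM\ via the embedding of each infinite closed $\overline H \leq G$ as a closed subgroup of $\Q_p \rtimes \Q_p^*$. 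Indeed, the closure $A := \overline{\overline H^{(n-1)}}$ of the last nontrivial term of the derived series of $\overline H$ is normal, abelian, and infinite in $\overline H$, hence $A \cong \Z_p$ by (3); the sub-implication $(3) \Rightarrow (2)$ (see below), applied to $C_{\overline H}(A)$ whose center contains $A$, gives $C_{\overline H}(A) \cong \Z_p$; the quotient $\overline H / C_{\overline H}(A)$ embeds in $\Aut(\Z_p) \cong \Z_p^*$ as a closed subgroup $Q$, and a cohomological splitting argument then yields $\overline H \cong \Z_p \rtimes Q \hookrightarrow \Z_p \rtimes \Z_p^* \hookrightarrow \Q_p \rtimes \Q_p^*$; by Theorem A the ambient group is \HLM, so every subgroup of $\overline H$---and hence of $G$---is locally minimal. \emph{Step D:} Theorem B concludes that $G$ is \HM.

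The principal obstacle is the sub-implication $(3) \Rightarrow (2)$ invoked in Step C: for an infinite closed $H \leq G$ with $Z(H) \cong \Z_p$, I must show $H$ is abelian (so $H = Z(H) \cong \Z_p$). The intended route begins by observing that for each $x \in H$, the closure $L_x := \overline{\langle Z(H), x\rangle}$ is closed, abelian, infinite, and equals its own center, whence $L_x \cong \Z_p$ by (3); since $Z(H) \leq L_x$ with both isomorphic to $\Z_p$, the index $[L_x : Z(H)]$ is a power of $p$, and therefore $H/Z(H)$ is a torsion $p$-group. Lemma \ref{lem:Z1Z2} then yields $Z_2(H) = Z(H)$, and a commutator analysis of the closed subgroup $\overline{\langle Z(H), x, y\rangle}$ for any pair $x, y \in H$---whose own center is again $\cong \Z_p$ by (3)---should force $[x,y] \in Z(H)$; combined with the torsionfreeness of $Z(H) \cong \Z_p$ against the $p$-torsion nature of $[x,y]$ modulo $Z(H)$, this gives $[x,y] = e$ and $H$ is abelian.
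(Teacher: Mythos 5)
Your implications $(1)\Rightarrow(2)$ and $(2)\Rightarrow(3)$ coincide with the paper's. For $(3)\Rightarrow(1)$ you take a genuinely different route (establish HLM plus ($\mathcal{C}_{fn}$) and invoke Theorem B, embedding closures into $\Q_p\rtimes\Q_p^*$ via Theorem A), but as written it has two genuine gaps, the first of which you yourself flag. The sub-implication ``$(3)$ plus $Z(H)\neq\{e\}$ for an infinite closed $H$ implies $H\cong\Z_p$'' is load-bearing in your Step C (it is what turns $C_{\overline H}(A)$ into a copy of $\Z_p$), yet your sketch does not close it: from $Z\bigl(\overline{\langle Z(H),x,y\rangle}\bigr)\cong\Z_p$ nothing directly forces $[x,y]\in Z(H)$, and Lemma \ref{lem:Z1Z2} only yields ``abelian'' when $H/Z(H)$ is \emph{finite}, which you never establish. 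The missing step is precisely the one the paper supplies for Theorem C (see Proposition \ref{new:thmB} and Corollary \ref{new:corol:for:ThmB}): one must rule out infinite abelian subgroups of the compact torsion $p$-group $H/Z(H)$ using Zelmanov's theorem and Hall--Kulatilaka (Facts \ref{fac:zel}(a) and \ref{fac:lfia}) -- e.g.\ by pulling back the closure of such a subgroup to a class-$\leq 2$ closed subgroup $B$ with $Z(H)\leq Z(B)$, applying $(3)$ and Lemma \ref{lem:Z1Z2} to see $B\cong\Z_p$, hence the subgroup was finite -- and only then does finiteness of $H/Z(H)$ plus Lemma \ref{lem:Z1Z2} give $H$ abelian. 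Without this, Step C (and hence your whole $(3)\Rightarrow(1)$) is incomplete.

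The second gap is the ``cohomological splitting argument'' asserted in Step C: the claim that the extension $1\to C_{\overline H}(A)\to \overline H\to Q\to 1$ with $C_{\overline H}(A)\cong\Z_p$ and $Q$ a closed subgroup of $\Z_p^*$ splits topologically is true, but it is a substantive statement (vanishing of continuous $H^2$ for the prime-to-$p$ part and for the pro-$p$ free factor of $Q$) that is nowhere proved and uses machinery the paper never develops; note that the paper's later classification (the groups $T_n$) shows that identifying the action/splitting is exactly where care is needed. Smaller, fixable omissions: the infiniteness of $A=\overline{\overline H^{(n-1)}}$ should be justified (it follows from your Step B, since a finite non-trivial $A$ would violate ($\mathcal{C}_{fn}$) for $\overline H$), and in Step B the application of $(3)$ to $C_K(N)$ needs the remark that $C_K(N)$ is infinite because $K/C_K(N)$ embeds in the finite group $\Aut(N)$. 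By contrast, the paper's proof of $(3)\Rightarrow(1)$ avoids all of this: it shows directly that every infinite non-abelian subgroup $H$ is essential in $\overline H$, using $M=\overline{H^{(m-1)}}\cong\Z_p$ (its infiniteness proved via Corollary \ref{cor:lsia} and a kernel argument), Lemma \ref{lem:cyc}, and the Minimality Criterion, with no appeal to Theorems A or B and no splitting of extensions; if you want to keep your architecture, you must supply the two missing ingredients above.
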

\begin{proof} Since the assertion of the theorem is trivially true for finite or abelian groups, we can  assume that $G$ is infinite and solvable of class $n>1$.
	
$(1) \Rightarrow (2)$: By  Lemma \ref{lem:meta}, $B=\overline{G^{(n-1)}}\cong\Z_p$ for some prime $p$.
Let $H$ be an infinite closed subgroup of $G$ such that  $Z(H)$ is non-trivial. By Theorem C, $H\cong \Z_q$ for some prime $q$. If $q\neq p$, then $H\cap B$ is trivial. Since $B$ is normal subgroup of $G$ we deduce that $B\rtimes H \cong  \Z_p\rtimes \Z_q$ is \HM, contradicting Lemma \ref{lem:difp}.

$(2)\Rightarrow (3)$: Trivial.

$(3) \Rightarrow (1)$: Let $H$ be an infinite subgroup of $G$. We will prove that $H$ is minimal. If $H$ is abelian, then $Z(H)=H\leq Z(\overline {H})$ and by $(3)$ we deduce that $Z(\overline {H})$  is  isomorphic to $\Z_p$, so, in particular, its subgroup $H$ is minimal by Prodanov's theorem. Hence, we can  assume that $H$ is non-abelian, so solvable of class $m$ where $n\geq m>1$, and we have to show that it is essential in
$\overline H$.

Consider the closed non-trivial abelian subgroup $M=\overline{H^{(m-1)}}$ of   $\overline H$. We prove that $M\cong \Z_p$.  It suffices to show, by our assumption $(3)$ that $M$ is infinite.  Assuming the contrary, let $M$  be finite. By Corollary \ref{cor:lsia},
$\overline H$ contains an infinite 
closed abelian subgroup $A$. By $(3), \ A\cong \Z_p$ and thus $A\cap M$ is trivial. As $M$ is a normal subgroup of $\overline H$, the topological semidirect product $M\rtimes_{\alpha}A$ is well defined, where $\alpha$ is the natural  action by conjugations $\alpha:A\times M\to M.$ Being an infinite group that acts on a finite group, $A$ must have a non-trivial kernel $K=\ker\alpha.$ Hence, $K\cong \Z_p$ as a closed non-trivial subgroup of $A\cong \Z_p$. It follows that $\overline H$ contains a subgroup isomorphic to $M\rtimes_{\alpha}K\cong M\times K\cong M\times \Z_p$. Let $C$ be a non-trivial cyclic subgroup of $M$, then $G$ contains an infinite closed abelian subgroup $L\cong C\times \Z_p$ that is not isomorphic to $\Z_p$, contradicting  $(3)$.

Coming back to the proof of the essentiality of $H$ in $\overline{H}$, let $N$ be  a non-trivial closed normal   subgroup of $\overline H$.
If $N_1=N\cap M$ is trivial, then $NM\cong N\times M\cong N\times \Z_p$. Following the same ideas of the first part of the proof, one can find an infinite abelian subgroup of $G$ not isomorphic to $\Z_p$, contradicting $(3)$.
Therefore $N_1$ and $H_1=H\cap M$  are non-trivial subgroups of $M\cong \Z_p$, with $N_1$ closed in $M$. By Lemma \ref{lem:cyc}, $N_1\cap H_1$ is non-trivial.

Since $N\cap H\geq N_1\cap H_1$, we conclude that $N\cap H\neq \{e\}$,  proving the essentiality of $H$ in $\overline{H}$.\end{proof}

Now we give an application of Theorem \ref{thm:charmeta} that we use in Example \ref{ex:Tn}.

\begin{lemma}\label{laaaasts:lemma}
Let $G$ be a compact solvable torsionfree group containing a closed \HM\ solvable subgroup $G_1$ of class $k$.
If $\overline{G_1^{(k-1)}}\cong\Z_p$, and $[G:G_1]=p^n$, for some $n\in \N$ and a prime $p$, then $G$ is \HM.
\end{lemma}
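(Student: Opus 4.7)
The plan is to reduce the lemma to Theorem \ref{thm:charmeta} applied to $G$, which is compact and solvable by assumption. So it suffices to exhibit a prime $q$ such that every infinite closed subgroup $H$ of $G$ satisfies $Z(H)=\{e\}$ or $Z(H)\cong \Z_q$, and the natural candidate is $q=p$ from the hypothesis. The first step is to pass from $H$ to $H_1 := H \cap G_1$: since $G_1$ has finite index $p^n$ in $G$, it is open, hence $H_1$ is open in $H$ with $[H:H_1]$ a power of $p$, and so $H_1$ is infinite whenever $H$ is. Being a closed infinite subgroup of the \HM\ group $G_1$, Theorem \ref{thm:charmeta} applied to $G_1$ (with its canonical prime $p$, matching our hypothesis through Lemma \ref{lem:meta} since $\overline{G_1^{(k-1)}}\cong \Z_p$) yields the dichotomy $Z(H_1) = \{e\}$ or $Z(H_1) \cong \Z_p$.

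Next I would exploit the inclusion $Z(H) \cap H_1 \leq Z(H_1)$, which holds because a central element of $H$ commutes with everything in $H_1 \subseteq H$. If $Z(H) \cap H_1$ is trivial, then $Z(H)$ meets the open identity neighborhood $H_1$ only at $e$, so $Z(H)$ is a discrete closed subgroup of the compact group $H$, hence finite; the torsionfreeness of $G$ forces $Z(H)=\{e\}$. This covers the case $Z(H_1)=\{e\}$ entirely, as well as the subcase $Z(H_1) \cong \Z_p$ with $Z(H)\cap H_1$ trivial. Otherwise, $Z(H) \cap H_1$ is a non-trivial closed subgroup of $Z(H_1)\cong \Z_p$, hence itself isomorphic to $\Z_p$; so $Z(H)$ becomes a torsionfree compact abelian group containing $\Z_p$ as an open subgroup of $p$-power index.

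The main obstacle is the final structural step: showing that such an abelian extension $A$ of a finite $p$-group $F$ by $\Z_p$ (with $A$ torsionfree and compact) must itself be $\Z_p$. I plan to handle it via Pontryagin duality: dualising $\{0\}\to \Z_p \to A \to F \to \{0\}$ yields $\{0\} \to \widehat F \to \widehat A \to \widehat{\Z_p}\cong \Z(p^\infty)\to \{0\}$ with $\widehat F$ finite. Since $\Z(p^\infty)$ is divisible the sequence splits, so $\widehat A \cong \widehat F \oplus \Z(p^\infty)$; and since $A$ is torsionfree, $\widehat A$ is divisible, forcing the finite summand $\widehat F$ (hence $F$) to be trivial. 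Applied to $A=Z(H)$ this gives $Z(H)\cong \Z_p$, completing the verification of condition (3) in Theorem \ref{thm:charmeta} and so the \HM-ness of $G$.
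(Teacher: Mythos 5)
Your overall architecture is sound and is a mildly different route from the paper's: you verify condition (3) of Theorem \ref{thm:charmeta} (a dichotomy for $Z(H)$ only), whereas the paper verifies condition (2), first forcing $H$ itself to be abelian via an index count and Lemma \ref{lem:Z1Z2} and then identifying $H$ with $\Z_p$. Up to the last step your argument is correct: $H_1=H\cap G_1$ is open, closed and infinite in $H$; the prime attached to $G_1$ by Theorem \ref{thm:charmeta} is indeed $p$, since $\overline{G_1^{(k-1)}}\cong\Z_p$ is an infinite closed abelian subgroup of $G_1$; $Z(H)\cap H_1\leq Z(H_1)$; and when this intersection is trivial, $Z(H)$ is discrete and compact, hence finite, hence trivial by torsionfreeness. (Note you only need $[H:H_1]$ to be finite; that it is a power of $p$ plays no role in your argument, and without normality of $G_1$ it is not obvious anyway.)

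The genuine gap is the final duality step. The sequence $0\to \widehat F\to \widehat A\to \Z(p^\infty)\to 0$ need not split: divisibility of the \emph{quotient} is not a splitting criterion (one needs the quotient projective, or the kernel divisible). Concretely, $0\to \Z(p)\to \Z(p^\infty)\to \Z(p^\infty)\to 0$, with second map multiplication by $p$, is non-split because $\Z(p^\infty)$ is indecomposable; equivalently $\mathrm{Ext}(\Z(p^\infty),\Z(p^n))\cong\Z(p^n)\neq 0$. Worse, the conclusion you extract from the splitting, namely that $F$ is trivial (i.e. $Z(H)=Z(H)\cap H_1$), is itself false in this generality: $A=\Z_p$ with the open subgroup $p\Z_p\cong\Z_p$ of index $p$ satisfies all your hypotheses with $F\cong\Z(p)$ non-trivial, and its dual sequence is exactly the non-split example above; so no argument can prove what you claim. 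What you actually need, and what is true, is only that a compact torsionfree abelian group with an open subgroup isomorphic to $\Z_p$ is itself isomorphic to $\Z_p$ --- precisely the fact the paper invokes, without proof, in its own last sentence (applied there to $H$ rather than to $Z(H)$). A correct argument: such a group $A$ is profinite and its $q$-Sylow subgroups for $q\neq p$ are finite, hence trivial by torsionfreeness, so $A$ is a torsionfree abelian pro-$p$ group containing $\Z_p$ with finite index, i.e. a finitely generated torsionfree $\Z_p$-module of rank $1$, so $A\cong\Z_p$; dually, $\widehat A$ is a divisible torsion group with finite socle and all $q$-components trivial for $q\neq p$, hence $\widehat A\cong\Z(p^\infty)$. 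With this replacement of your last step, your proof goes through and is, if anything, slightly leaner than the paper's, since it avoids Lemma \ref{lem:Z1Z2} and the $p$-power index bookkeeping.
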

\begin{proof} According to Theorem \ref{thm:charmeta}, it suffices to check that for every infinite closed subgroup $H$ of $G$, either $Z(H) = \{e\}$  or $H\cong \Z_p$. Assume that $H$ is an infinite closed subgroup of $G$, and let $H_1 = G_1 \cap H$. Then there exists $r\leq n$ such that
\begin{equation}\label{dead:equation}
[H:H_1]=p^r.
\end{equation}
So, $H_1$ is an infinite closed subgroup of $G_1$.
	
	Now assume that $Z(H)$ is non-trivial and pick any $e \ne z\in Z(H)$. Then $e\ne z^{p^m} \in H_1$ for some integer  $0 < m \leq r$, by (\ref{dead:equation}). As $z^{p^m} \in Z(H_1)$, this proves that $Z(H_1) \ne \{e\}$, so $H_1\cong \Z_p$ by Theorem \ref{thm:charmeta}.
	
	Let $A=Z(H)\cap H_1$. As $z^{p^m}\in A$, it is a non-trivial closed subgroup of $H_1\cong \Z_p$, so $A$ is isomorphic to $\Z_p$ and $[H_1:A]=p^s$ for some  $s\in \N$. Then $p^{r+s} = [H:H_1] [H_1:A] = [H:A] = [H:Z(H)] [Z(H):A]$, so $H/Z(H)$ is a finite $p$-group. Thus $H$ is abelian by Lemma \ref{lem:Z1Z2}.
	
	Since the compact abelian torsionfree  group $H$ contains a subgroup isomorphic to $\Z_p$ of finite index, we deduce that $H$ itself is isomorphic to $\Z_p$.
\end{proof}

\begin{example}\label{ex:padic:rtimes:F}\label{ex:Tn}
Now we use Theorem \ref{thm:charmeta} and Lemma \ref{laaaasts:lemma} to see that the  the infinite compact metabelian groups $K_{p,F}$ and $T_n=(\Z_2,+)\rtimes_{\beta} C_2^{2^n}$ are \HM.
\begin{itemize}
\item[(a)] To  show that $K_{p,F}=\Z_p\rtimes F$ is \HM, pick an infinite closed subgroup $H$ of $K_{p,F}$. If $H$ is non-abelian, then $Z(H)$ is trivial by Lemma \ref{lem:trabel}. If $H$ is abelian, then there exists $e\ne x\in H \cap ( \Z_p\rtimes \{1\} )$ since $F$ is finite. As $H\leq C_{ K_{p,F} }(x)\leq \Z_p\rtimes \{1\}$, we obtain that $H\cong \Z_p$. By Theorem \ref{thm:charmeta}, we conclude that $K_{p,F}$ is \HM.

\item[(b)] The group $T_n$ is torsionfree, and its compact subgroup $(\Z_2,+)\rtimes_{\beta}C_2^{2^{n+1}}\cong M_{2,n+1}$ has  index $2$ in $T_n$. Moreover, $M_{2,n+1}$ is \HM\ by Example \ref{padics:rtimes:padics}, while  $M_{2,n+1}' \cong \Z_2$ by Lemma \ref{lemma:speder}. So we deduce by Lemma \ref{laaaasts:lemma} that $T_n$ is \HM.

Alternatively, since its  open subgroup $M_{2,n+1}$ is \HLM, the group $T_n$ is also \HLM\ by  Lemma \ref{lem:HLM}(2). As $T_n$ is torsionfree, Theorem B implies that $T_n$ is \HM.
\end{itemize}
\end{example}

The next result, in which we study when some semidirect products are \HM, should be compared with Proposition \ref{prop:alpha} and Lemma \ref{lem:uonique}.
 \begin{lemma}\label{lemma:kerhm}
	Let $G=(\Z_p,+)\rtimes_{\alpha} T$, where $T$ is either finite or (topologically) isomorphic to $(\Z_p,+)$. Then $G$ is \HM\ if and only if  $\alpha$ is faithful.
\end{lemma}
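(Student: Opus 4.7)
The plan is to prove both directions by assembling the tools already developed: Proposition \ref{prop:alpha} and Lemma \ref{lem:uonique} classify the shape of $G$ when $\alpha$ is faithful, while Prodanov's theorem (Fact \ref{fac:HMZ}) prevents $G$ from being \HM\ when $\alpha$ has non-trivial kernel.

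For the forward direction, suppose $G$ is \HM\ and assume, for a contradiction, that the kernel $K=\ker\alpha$ of the action is non-trivial. Since $K\trianglelefteq T$ acts trivially on $\Z_p$, the subgroup $(\Z_p,+)\rtimes_{\alpha}K$ of $G$ equals the direct product $(\Z_p,+)\times K$. If $T$ is finite, pick any non-trivial cyclic subgroup $C\leq K$; if $T\cong(\Z_p,+)$, then $K$ is a non-trivial closed subgroup of $\Z_p$, hence $K\cong\Z_p$, and we set $C=K$. In either case, $(\Z_p,+)\times C$ is an infinite compact abelian subgroup of $G$ that is not topologically isomorphic to $\Z_q$ for any prime $q$ (it has either a non-trivial torsion part or a rank-two $p$-adic part). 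By Fact \ref{fac:HMZ}, this subgroup fails to be \HM, contradicting the fact that every subgroup of a \HM\ group is itself \HM.

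For the backward direction, assume $\alpha$ is faithful and split according to the shape of $T$. If $T$ is finite, Lemma \ref{lem:uonique} gives a topological isomorphism $G\cong K_{p,F}$ for some subgroup $F\leq F_p$, and Example \ref{ex:padic:rtimes:F}(a) says $K_{p,F}$ is \HM. If $T\cong(\Z_p,+)$, Proposition \ref{prop:alpha} shows that $G$ is topologically isomorphic either to $M_{p,n}$ for some $n\in\N$ (any prime $p$) or to $T_n$ for some $n\in\N$ (when $p=2$); by Example \ref{padics:rtimes:padics} the groups $M_{p,n}$ are \HM, and by Example \ref{ex:Tn}(b) so are the groups $T_n$.

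Since both directions reduce directly to previous results, there is no real obstacle; the only modest point requiring care is making sure the contradiction in the forward direction does produce an infinite compact abelian subgroup that is genuinely not isomorphic to some $\Z_q$, which is immediate from the fact that $(\Z_p,+)\times C$ is either non-torsion\-free (when $C$ is finite non-trivial) or has a $\Z_p\times\Z_p$ closed subgroup (when $C\cong\Z_p$).
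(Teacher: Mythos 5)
Your proof is correct and follows essentially the same route as the paper: the forward direction applies Prodanov's theorem (Fact \ref{fac:HMZ}) to the subgroup $(\Z_p,+)\times K$ arising from a non-trivial kernel (you merely spell out the finite and $\Z_p$ cases that the paper leaves implicit), and the backward direction reduces, exactly as in the paper, to Lemma \ref{lem:uonique} together with Example \ref{ex:padic:rtimes:F} when $T$ is finite, and to Proposition \ref{prop:alpha} together with Examples \ref{padics:rtimes:padics} and \ref{ex:Tn} when $T\cong(\Z_p,+)$.
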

\begin{proof}
	If $K =\ker\alpha$ is not trivial, then  $(\Z_p,+)\times K\leq G$ is  not \HM\ by Prodanov's theorem,
	so  $G$ is also not \HM.
	
	Now assume that  $\alpha$ is faithful. If $T$ is finite, then $T$ is isomorphic to a subgroup $F$ of $F_p$, and $G\cong K_{p,F}$  by Lemma \ref{lem:uonique}. By Example \ref{ex:padic:rtimes:F}, $G$ is \HM. In case $T$ is isomorphic to $\Z_p$, then by Proposition \ref{prop:alpha} either $G\cong M_{p,n}$ or $G\cong T_n$ for some $n\in \N.$ We use Example \ref{padics:rtimes:padics} and Example \ref{ex:Tn}, respectively, to conclude that $G$ is \HM.
\end{proof}

\subsection{The general case}\label{The general case}
The next proposition offers a reduction from the general case to a specific situation that will be repeatedly
used in the sequel, very often without explicitly giving/recalling all details.

\begin{prop}\label{prop:ms}
Let $G$ be an infinite \HM\ locally compact group, which is either compact or locally solvable.

If $G$ has a non-trivial normal solvable subgroup, then $G$ is metabelian. In particular, it has a normal subgroup $N\cong \Z_p$, such that $N = C_G(N)$, and there exists a monomorphism
\[
j: G/N \hookrightarrow \Aut(N)\cong \Aut(\Z_p) \cong \Z_p \times F_p.\eqno{(\dag)}
\]
\end{prop}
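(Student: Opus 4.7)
The strategy is to first locate a closed normal abelian subgroup $A \cong \Z_p$, then pass to the self-centralizing subgroup $N := C_G(A)$, which will turn out to also be isomorphic to $\Z_p$ and to satisfy $C_G(N) = N$; the conjugation action will then produce the desired monomorphism and will imply that $G$ is metabelian.

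First I would verify that $G$ is non-discrete, so that Proposition~\ref{prop:ndhmlc}(2) applies. This is immediate when $G$ is infinite compact, and when $G$ is locally solvable it follows from Lemma~\ref{lem:tnlf}. Hence $G$ satisfies $(\mathcal{N}_{fn})$. Given a non-trivial normal solvable subgroup $S \leq G$, the closure $\overline{S}$ is again solvable (since commutators are continuous), of some derived class $n \geq 1$. The subgroup $\overline{S}^{(n-1)}$ is characteristic in $\overline{S}$, hence normal in $G$, and its closure $A$ is a non-trivial closed normal abelian subgroup of $G$. By $(\mathcal{N}_{fn})$, $A$ cannot be finite, so $A$ is infinite; being \HM, locally compact and abelian, Corollary~\ref{cor:spr} yields $A \cong \Z_p$ for some prime $p$.

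Next, set $C := C_G(A)$, a closed normal subgroup of $G$ with $A \leq C$. The conjugation action of $G$ on $A$ gives a homomorphism $G \to \Aut(A) \cong \Z_p^*$ with kernel $C$, so $G/C$ embeds in the abelian group $\Z_p^*$; in particular $G' \leq C$. Moreover $A \leq Z(C)$, so $C$ has non-trivial center. Being an infinite \HM\ locally compact subgroup of $G$ that inherits the compact (resp.\ locally solvable) hypothesis from $G$, Theorem~C applies to $C$ and forces $C \cong \Z_p$. Thus $G' \leq C$ is abelian and $G$ is metabelian, proving the first assertion.

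For the ``in particular'' part, I would set $N := C$: it is a closed normal subgroup of $G$, isomorphic to $\Z_p$. The inclusion $N \leq C_G(N)$ is automatic as $N$ is abelian; conversely, any $g \in C_G(N)$ commutes in particular with $A \leq N$, so $g \in C_G(A) = C = N$. Hence $C_G(N) = N$, and the conjugation action produces the desired monomorphism $j : G/N \hookrightarrow \Aut(N) \cong \Aut(\Z_p) \cong \Z_p \times F_p$. The only slightly non-obvious step is the decision to replace $A$ by its centralizer: a priori $C_G(A)$ could strictly contain $A$, so one cannot take $N = A$ directly, but invoking Theorem~C on $C$ both pins down $C \cong \Z_p$ and makes $N = C_G(A)$ automatically self-centralizing via the double-centralizer identity $C_G(C_G(A)) \leq C_G(A)$ forced by $A \leq C_G(A)$.
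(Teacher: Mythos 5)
Your proof is correct, and its overall architecture is the same as the paper's: reduce the given normal solvable subgroup to an infinite abelian normal subgroup via the derived series and property $(\mathcal{N}_{fn})$, pass to its centralizer, identify that centralizer with $\Z_p$ by Theorem~C (its center is non-trivial since it contains $A$), and then read off the metabelian structure and the monomorphism $j$ from the conjugation action. The one genuine difference is how the self-centralizing subgroup $N$ is obtained. The paper iterates the construction, setting $N_{n+1}=C_G(N_n)$, shows each $N_n\cong\Z_p$ via Theorem~C, and then argues that the ascending chain stabilizes by embedding everything into $H=\overline{\bigcup_n N_n}\cong\Z_p$ (Corollary~\ref{cor:spr}) and using that $[H:N_1]$ is finite. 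You observe instead that a single step suffices: since $A\leq C_G(A)$, one has $C_G(C_G(A))\leq C_G(A)$, and the reverse inclusion holds because $C_G(A)\cong\Z_p$ is abelian, so $N=C_G(A)$ is already self-centralizing. This short-circuits the induction and the limit argument entirely, giving a cleaner proof of the same statement; everything else (the use of Lemma~\ref{lem:tnlf} for non-discreteness, Proposition~\ref{prop:ndhmlc}(2) for $(\mathcal{N}_{fn})$, closedness and normality of the centralizer, and $\Aut(\Z_p)\cong\Z_p\times F_p$) matches the paper's reasoning.
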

\begin{proof}
Let $A$ be a non-trivial normal solvable subgroup of $G$, and assume that $n > 0$ is the solvability class of $A$. Then $A^{(n-1)}$ is non-trivial and abelian, it is characteristic in $A$, so normal in $G$. Hence, we can assume $A$ to be abelian.

Now note that $G$ is non-discrete by Lemma \ref{lem:tnlf}, so $A$ is infinite by Proposition \ref{prop:ndhmlc}.

Let $N_1 = C_G(A)$, and note that also $N_1$ is normal in $G$. As $A$ is abelian, we have $A \leq N_1$, and indeed $A \leq Z(N_1)$, so  $Z(N_1)$ is infinite. Moreover, $N_1$ is closed in $G$, so it is hereditarily minimal, and  locally compact.
Since $N_1$ is also either compact or locally solvable, we have $N_1 \cong \Z_p$ for some prime $p$ according to Theorem C.

By induction, define $N_{n+1} = C_G(N_n)$ for $n \geq 1$, and similarly prove that $ N_n \cong\Z_p $ is normal in $G$ for every $n$. Then $H = \displaystyle{ \overline{ \bigcup_{n} N_n } }$ is abelian, locally compact, hereditarily minimal, so also $H \cong \Z_p$ by Corollary \ref{cor:spr}. Then $[H: N_1]$ is finite, so the ascending chain of subgroups $\{N_n\}_n$ stabilizes and there exists  $n_0\in \N_+$ such that $H = N_{n_0} = C_G (N_{n_0}) \cong \Z_p$ is normal in $G$.

Let $N = N_{n_0}$. Then $G$ acts on $N$ by conjugation, and the kernel of this action is $N=C_G(N)$. Hence $G/C_G(N)=G/N$  is isomorphic to a subgroup of $\Aut(N)$ via a monomorphism $j$ as in $(\dag)$. Since $N\cong \Z_p$, we have $\Aut(N)\cong \Aut(\Z_p) \cong \Z_p \times F_p$. This implies that $G/N$ is abelian, so $G$ is metabelian.
\end{proof}

Let us see that the assumption `compact or locally solvable' cannot be removed in Proposition \ref{prop:ms}.
Consider the countable discrete group $G = \Z(2) \times T$ from Example \ref{ex:tarski}. The hereditarily minimal group  $G$ has a normal abelian subgroup, but it is not compact, nor even locally solvable.

In the rest of this section $G$ is a \HM \ locally compact, solvable group, and $N$ and $j$ are as in Proposition \ref{prop:ms}.
\begin{prop}\label{prop:tri}
	
If $x\in G\setminus N$, then $H_x=\overline{\langle x\rangle}$ trivially meets $N$. Moreover, if $x$ is non-torsion, then $G_1=N \cdot H_x \cong N\rtimes H_x$ is isomorphic to  either $M_{p,n}$ or $T_n$, for some prime $p$ and $n\in \N$.
\end{prop}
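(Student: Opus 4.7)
\medskip

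\noindent\textbf{Proof plan for Proposition \ref{prop:tri}.}

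The first assertion will be proved by contradiction. Assume $P := H_x \cap N \neq \{e\}$. Since $H_x$ is closed, abelian, and contains $x$, the subgroup $P$ is centralized by $x$. On the other hand, $P \leq N \cong \Z_p$ and conjugation by $x$ on $N$ is the multiplication by the scalar $\alpha := j(xN) \in \Z_p^* \subseteq \Aut(N)$. Thus for any $n \in P$ we have $\alpha n = n$, i.e. $(\alpha - 1)n = 0$ in $\Z_p$. Picking $n \neq e$ in $P$ and using that $\Z_p$ is an integral domain, we deduce $\alpha = 1$, so $xN$ lies in the kernel of $j$, which is trivial. Hence $x \in N$, contradicting the choice of $x$.

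For the second assertion, assume $x$ is non-torsion. Then $H_x$ is an infinite closed abelian subgroup of the HM group $G$, so by Corollary \ref{cor:spr} there is a prime $q$ with $H_x \cong \Z_q$. Since $N$ is normal in $G$ and $N \cap H_x = \{e\}$ by the first part, the algebraic structure of $G_1 = N \cdot H_x$ is that of the semidirect product $N \rtimes H_x$, where $H_x$ acts on $N$ by conjugation. As $N$ and $H_x$ are both compact, so is $N \cdot H_x$; the natural continuous bijective homomorphism $N \rtimes H_x \to G_1$ from a compact group onto a Hausdorff group is a topological isomorphism, so $G_1 \cong N \rtimes H_x$ as topological groups. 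Moreover, the action of $H_x$ on $N$ is faithful: its kernel equals $H_x \cap C_G(N) = H_x \cap N = \{e\}$.

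It remains to identify the prime $q$ and to match $G_1$ with one of the listed groups. If $p \neq q$, then $G_1 \cong \Z_p \rtimes \Z_q$ would fail to be \HLM\ by Lemma \ref{lem:difp}, contradicting the fact that $G_1 \leq G$ is \HM. Hence $q = p$, and $G_1 \cong \Z_p \rtimes_\alpha \Z_p$ with $\alpha$ a faithful action. Proposition \ref{prop:alpha} then gives $G_1 \cong M_{p,n}$ for some $n \in \N$ when $p > 2$, and $G_1 \cong M_{p,n}$ or $G_1 \cong T_n$ for some $n \in \N$ when $p = 2$, as required.

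The main obstacle is the first assertion, since at first glance it may seem that one should attempt a size/index computation (comparing $[H_x : H_x \cap N]$ with the torsion of $\Aut(\Z_p)$), which leaves the awkward case $p = 2$ open. The clean way around this is the observation above: because $H_x$ is abelian and contains $x$, the subgroup $P = H_x \cap N$ lies in the $1$-eigenspace of the multiplication-by-$\alpha$ action on $N \cong \Z_p$, and in the integral domain $\Z_p$ this eigenspace is trivial unless $\alpha = 1$.
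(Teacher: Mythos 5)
Your proof is correct, and for the second assertion it is essentially the paper's argument: identify $H_x\cong \Z_q$ (Lemma \ref{Prod+Steph:lemma} / Corollary \ref{cor:spr}), rule out $q\neq p$ via Lemma \ref{lem:difp} using that $G_1\leq G$ is \HM\ hence \HLM, and conclude with Proposition \ref{prop:alpha}; the only deviation is that you obtain faithfulness of the conjugation action directly from $\ker=H_x\cap C_G(N)=H_x\cap N=\{e\}$ (using $C_G(N)=N$ from Proposition \ref{prop:ms}), where the paper instead invokes Lemma \ref{lemma:kerhm} -- both are valid. For the first assertion your route is a genuine streamlining of the paper's. The paper passes to the dense cyclic subgroup $C=\langle x\rangle$, uses that $C$ is essential in $H_x$ (so $H_x\cap N$ is trivial iff $C\cap N$ is), and then argues that $\varphi_x$, being the identity on $H_x$, fixes $N\cap H_x=p^kN$ pointwise and hence, by torsionfreeness of $N$, all of $N$, contradicting $x\notin C_G(N)=N$. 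You reach the same contradiction in one step: writing the conjugation automorphism of $N\cong\Z_p$ as multiplication by $\alpha=j(xN)\in\Z_p^*$, a single nonzero fixed point in $H_x\cap N$ gives $(\alpha-1)n=0$ with $n\neq 0$, so $\alpha=1$ because $\Z_p$ is a domain, whence $xN\in\ker j$ and $x\in N$. This bypasses the essentiality step (so the \HM\ hypothesis is not needed for this part) and treats torsion and non-torsion $x$ uniformly; what it uses instead -- the identification $\Aut(N)\cong\Z_p^*$ and the injectivity of $j$ -- is exactly what Proposition \ref{prop:ms} provides, so nothing is lost. Both arguments ultimately rest on the same observation, namely that an automorphism of $\Z_p$ fixing a nontrivial subgroup pointwise must be the identity.
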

\begin{proof} Clearly, we can consider only the case when $x$ is non-torsion, as $N$ is torsionfree. Hence, $H_x\cong \Z_q$ for some prime $q$ by  Lemma \ref{Prod+Steph:lemma}.
If $q \neq p$, then $H_x\cap N= \{e\}$  and  $G\geq N\cdot H_x\cong  N\rtimes H_x$, where the action is the conjugation in $G$. Moreover, this semidirect product is also isomorphic to $\Z_p\rtimes_{\alpha} \Z_q$ for some action $\alpha$, contradicting   Lemma \ref{lem:difp}. So we deduce that $H_x\cong \Z_p$.
	
As $G$ is \HM\ so is its subgroup $C$. In particular, $C$ is essential in its closure $H_x$. Since $N$ is normal in $G$, it follows that $H_x \cap N$ is trivial if and only if $C\cap N$ is trivial.

	Assume by contradiction that $C\cap N$ is non-trivial.
		Let $\varphi_x: G\to G$ be the conjugation by $x$,
	and note that $\varphi _x\upharpoonright_N\neq Id_N$, as $x\notin N=C_G(N)$. Obviously, $\varphi_x\upharpoonright _C = Id_C$, so $\varphi_x\upharpoonright _{H_x} = Id_{H_x}$, and in particular $\varphi_x\upharpoonright _{N\cap H_x} = Id_{N\cap H_x}$.
	 As $N\cap H_x$ is a non-trivial closed subgroup of $N\cong \Z_p$, we have  $N\cap H_x= p^k N$ for an integer $k$. Take an arbitrary element $t\in N$, so $p^kt\in N\cap H_x$ and $\varphi_x(p^kt) = p^kt$.
	On the other hand, $\varphi_x(p^kt) = p^k\varphi_x(t)$,
	which means $\varphi_x(t)=t$, as $N$ is torsionfree. So $\varphi_x\upharpoonright_N= Id_{N}$, a contradiction.

 Thus, $G_1=N \cdot H_x \cong N\rtimes H_x \cong \Z_p\rtimes_{\alpha} \Z_p$, for a faithful action $\alpha$ by Lemma \ref{lemma:kerhm}.  Finally, apply Proposition \ref{prop:alpha}.
\end{proof}

In the following result, we consider the canonical projection $q: G \to G/N$, and we study how the torsion elements of $G$ are related to those of $G/N$. To this end, recall that $G/N$ is (algebraically isomorphic to) a subgroup of $\Aut(N)\cong \Aut(\Z_p)\cong \Z_p \times F_p$, so  the torsion subgroup of $G/N$ is isomorphic to a subgroup of $F_p$.
\begin{prop}\label{prop:abc}

	\ben[(a)]
	\item Let $x \in G \setminus N$. Then $C = \langle x\rangle \cong \langle q(x)\rangle$, so $x$ is torsion if and only if $q(x)$ is torsion. In the latter case, $C$ is isomorphic to a subgroup of $F_p$.

	\item
	$G/N$ is torsionfree if and only if $G$ is torsionfree.
	
	\item Assume $G$ has torsion, and fix a torsion element $x_0$ of $G$ of maximum order $m$. Then the subgroup $S_p= N\cdot\langle x_0\rangle$ contains all torsion elements of $G$, and indeed
	$t (G) = t(S_p) = S_p \setminus N$. Furthermore, $S_p\cong K_{p,F}$, where $F$ is the subgroup of $F_p$ isomorphic to $ \langle x_0\rangle.$ \een
\end{prop}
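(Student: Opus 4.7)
The three statements are naturally proved in order, with each building on the previous.

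For part (a), the plan is to use Proposition \ref{prop:tri} as a black box: it gives $H_x\cap N=\{e\}$ for every $x\in G\setminus N$, hence $\langle x\rangle\cap N=\{e\}$, so the restriction $q|_{\langle x\rangle}\colon \langle x\rangle\to G/N$ is injective and lands on $\langle q(x)\rangle$. An isomorphism preserves orders, so $x$ and $q(x)$ are simultaneously torsion. For the embedding statement, I invoke $(\dag)$: $G/N$ embeds in $\Z_p\times F_p$, whose torsion subgroup is $\{0\}\times F_p\cong F_p$, and any finite cyclic subgroup of $G/N$ must land inside this torsion part.

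Part (b) is a quick corollary of (a). In one direction, any torsion $y\in G\setminus\{e\}$ lies in $G\setminus N$ since $N$ is torsionfree, and then $q(y)$ is a non-trivial torsion element of $G/N$ of the same order. Conversely, any torsion $\bar y\neq 0$ in $G/N$ has every preimage sitting in $G\setminus N$, and (a) promotes it back to a torsion element of $G$ of the same order.

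For part (c), I first set up $S_p$ as a (topological) semidirect product. By (a), $\langle x_0\rangle\cap N=\{e\}$ and $\langle x_0\rangle$ is isomorphic to a finite subgroup $F$ of $F_p$; since $N\triangleleft G$, the product $S_p=N\cdot\langle x_0\rangle$ is indeed $N\rtimes_\alpha\langle x_0\rangle$ with $\alpha$ the conjugation action. The kernel of $\alpha$ is $\langle x_0\rangle\cap C_G(N)=\langle x_0\rangle\cap N=\{e\}$ (using $C_G(N)=N$ from Proposition \ref{prop:ms}), so $\alpha$ is faithful and Lemma \ref{lem:uonique} gives $S_p\cong K_{p,F}$.

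The main obstacle is the description of $t(G)$. One direction is easy: any $y\in S_p\setminus N$ has $q(y)\in\langle q(x_0)\rangle\setminus\{0\}$, which is torsion, and (a) turns this into $y$ being torsion. For the reverse inclusion $t(G)\subseteq S_p$, the heart of the matter is showing that the torsion subgroup $T$ of $G/N$ coincides with $\langle q(x_0)\rangle$. Here I exploit that $T$ embeds in $F_p$ (by $(\dag)$) and is therefore cyclic, so it is generated by some $\bar z$ of order $|T|$; any preimage $z\in G\setminus N$ is, by (a), torsion of order $|T|$, forcing $|T|\leq m$ by the choice of $m$ as the maximum torsion order in $G$. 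The reverse $|T|\geq m$ holds because $\langle q(x_0)\rangle\leq T$ has order $m$, so $T=\langle q(x_0)\rangle$. Then for any torsion $y\in G\setminus\{e\}$, $q(y)\in T=\langle q(x_0)\rangle$, hence $y\in q^{-1}(\langle q(x_0)\rangle)=N\langle x_0\rangle=S_p$, giving the required inclusion and, via the torsionfreeness of $N$, the identity $t(G)\setminus\{e\}=S_p\setminus N$.
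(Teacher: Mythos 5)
Your proposal is correct and follows essentially the same route as the paper: part (a) via Proposition \ref{prop:tri} and the embedding $(\dag)$, part (b) as an immediate corollary, and part (c) by showing $t(G/N)=\langle q(x_0)\rangle$ (cyclicity inside $F_p$ plus maximality of $m$) and pulling back through $q$, then identifying $S_p\cong K_{p,F}$ via Lemma \ref{lem:uonique}. The only small deviation is that you get faithfulness of the conjugation action directly from $\ker\alpha=\langle x_0\rangle\cap C_G(N)=\langle x_0\rangle\cap N=\{e\}$ using Proposition \ref{prop:ms}, whereas the paper invokes Lemma \ref{lemma:kerhm} through the hereditary minimality of $S_p$; both are valid, and yours is marginally more direct.
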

\begin{proof}
	(a) By Proposition \ref{prop:tri}, the map $q$ restricted to $C$ induces an isomorphism $C \cong \langle q(x)\rangle$. For the last part, use the fact that the torsion subgroup of $G/N$ is isomorphic to a subgroup of $F_p$.

	(b) If $G$ has a non-trivial torsion element $g$, then $g \notin N\cong \Z_p$. So, $q(g)$ is a non-trivial torsion element of $G/N$.
	  By item (a), if $q(g)$ is a non-trivial torsion element of $G/N$, then
	  $g$ is a  non-trivial torsion element of $G$.
	
	(c) Using item (a) and the fact that $t(G/N)$ is cyclic  we deduce
	that $t(G/N)=t(S_p/N)=\langle q(x_0) \rangle$. Moreover,  we have
	$$t(G)=q^{-1}(t(G/N))\setminus N= q^{-1}(t(S_p/N))\setminus N=S_p\setminus N.$$
As $S_p$ is \HM\ (being a subgroup of $G$), Lemma \ref{lemma:kerhm} implies  that  $S_p\cong N\rtimes_{\alpha} \langle x_0 \rangle $, for some faithful action $\alpha$. The isomorphism $S_p\cong K_{p,F}$ now follows From Lemma \ref{lem:uonique}.
\end{proof}

By Proposition \ref {prop:ms} we may identify $G/N$  with a subgroup of $\Z_p\times F_p$ via the monomorphism $j$. For $p>2$  the following dichotomy holds.
\begin{prop}\label{prop:dich}
	If $p>2$, then  either $j(G/N)\leq \Z_p$ or $j(G/N)\leq F_p$.
\end{prop}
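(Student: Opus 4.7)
The plan is to prove the dichotomy by analyzing how torsion and non-torsion elements of $G$ behave under $j$, then deriving a contradiction if both the $C_p$-part and the $F_p$-part of $\Aut(N) \cong \Z_p^* \cong C_p \times F_p$ are hit non-trivially by $j(G/N)$. Recall that for $p > 2$ we have $C_p = 1 + p\Z_p \cong (\Z_p,+)$ and that the torsion subgroup of $\Z_p^*$ is exactly $F_p$, with $C_p \cap F_p = \{1\}$.

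First I would record two basic observations about the monomorphism $j\colon G/N \hookrightarrow \Z_p^*$. On the one hand, if $x \in G$ is torsion, then $j(xN)$ is torsion in $\Z_p^*$, hence $j(xN) \in F_p$. On the other hand, if $x \in G \setminus N$ is non-torsion, then Proposition \ref{prop:tri} supplies $H_x = \overline{\langle x \rangle} \cong \Z_p$ with $H_x \cap N = \{e\}$, so the conjugation action of $H_x$ on $N$ is faithful (using $C_G(N)=N$), and Proposition \ref{prop:alpha} (where the hypothesis $p > 2$ rules out the $T_n$ case) forces $N \cdot H_x \cong M_{p,n}$ for some $n\in \N$. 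Tracing through this isomorphism shows that, under $\Aut(N) \cong \Z_p^*$, the image of $H_x$ lies inside $C_p^{p^n} \leq C_p$, and in particular $j(xN) \in C_p$.

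With these two observations, the argument by contradiction is short. Suppose that $j(G/N) \not\leq C_p$ and $j(G/N) \not\leq F_p$. Picking $y \in G$ with $j(yN) \notin F_p$, the first observation forces $y$ to be non-torsion, and the second then gives $j(yN) \in C_p \setminus \{1\}$. Picking $x \in G$ with $j(xN) \notin C_p$, the second observation forces $x$ to be torsion, and the first then gives $j(xN) \in F_p \setminus \{1\}$. Writing the product in $C_p \times F_p$ we get $j((xy)N) = j(xN)\, j(yN) = (c,t)$ with $c\neq 1$ in $C_p$ and $t\neq 1$ in $F_p$. This element is non-torsion in $\Z_p^*$ (since $C_p$ is torsionfree), so by Proposition \ref{prop:abc}(a) the element $xy$ is non-torsion and lies outside $N$. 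But then the second observation applied to $xy$ forces $j((xy)N) \in C_p$, contradicting the non-trivial $F_p$-component $t$.

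The main obstacle is establishing the second observation cleanly: one must verify that the semidirect product $N \rtimes H_x$ has faithful action (immediate from $C_G(N)=N$ and $H_x \cap N=\{e\}$), invoke Proposition \ref{prop:alpha} with the assumption $p>2$ to land in $M_{p,n}$ rather than $T_n$, and then trace through the explicit isomorphism in the proof of that proposition to see that the conjugation action of $H_x$ on $N$ is realized inside $C_p^{p^n}$, not in the full $\Z_p^*$. The rest of the argument is then a one-line manipulation in $C_p \times F_p$.
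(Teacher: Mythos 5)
Your proof is correct and follows essentially the same route as the paper: the decisive point in both is that a non-torsion element outside $N$ generates (after closure) a copy of $\Z_p$ whose image in $\Aut(N)\cong C_p\times F_p$ must avoid the $F_p$-component because $\Z_p$ is $(p-1)$-divisible while $|F_p|=p-1$ --- the paper runs this inline via the Minimality Criterion and Lemma \ref{lem:difp}, whereas you outsource it to Proposition \ref{prop:tri} and the $p>2$ case inside the proof of Proposition \ref{prop:alpha}. Your explicit product step manufacturing a mixed element from the failure of the dichotomy is a small, harmless addition that the paper's proof leaves implicit.
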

\begin{proof} By contradiction, let $(x,t)\in j(G/N)$ be such that $x\neq 0_{\Z_p}$ and $t\neq e_{F_p}$. Let $q:G\to G/N$ be the canonical map and pick $z\in G$ such that $q(z)=(x,t).$  Then  $C =  \langle z\rangle$  misses  $N$, as $(x,t)$ is non-torsion. Furthermore, $C_1=\overline{C}\cong \Z_q$ for some prime $q$. As  $N_1 = N \cap C_1$ misses $ C$ and as  $C$  is minimal
	(since $G$ is HM), we deduce that the closed subgroup  $N_1$ of $ C_1$ must be trivial by the Minimality Criterion.
	Hence, the subgroup  $N\rtimes C_1\cong \Z_p\rtimes \Z_q$ is \HM. This is possible only if $q = p$ by Lemma \ref{lem:difp}. Thus, $C_1\cong \Z_p$, so  also $H=q(C_1)$ is
	(even topologically) isomorphic to $\Z_p$. Since $\Z_p$ is $(p-1)$-divisible we have $H=(p-1)H$. Now let $\pi_2:\Z_p\times F_p \to F_p$ be the projection on the second coordinate. We have $\pi_2(H)=\pi_2( (p-1)H)=\{e_{F_p}\}.$ On the other hand, $e_{F_p}\neq t\in \pi_2(H)$, a contradiction.
\end{proof}

\subsection{Torsionfree case}\label{Torsionfree case}

The next theorem classifies the locally compact solvable HM groups, that are also torsionfree. The non-torsionfree groups are considered in Theorem \ref{prop:nonfree}.
\begin{thm} \label{thm:free}	Let $G$ be an infinite locally compact solvable torsionfree  group, then the following conditions are equivalent:
	\ben
	\item  $G$ is \HM;
	\item $G$ is topologically isomorphic to one of the following groups:
	\ben [(a)]
	\item  $\Z_p$ for some prime $p$;
	\item $M_{p,n}=\Z_p \rtimes C_p^{p^n}$, for some prime $p$ and  \ $n\in \N$;
	\item $T_n=(\Z_2,+)\rtimes_{\beta}C_2^{2^n} $, for some $n\in \N$.
	\een \een
\end{thm}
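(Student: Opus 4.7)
The direction $(2)\Rightarrow(1)$ is immediate: Fact \ref{TeoP} handles (a), while Examples \ref{padics:rtimes:padics} and \ref{ex:Tn} handle (b) and (c) respectively. For $(1)\Rightarrow(2)$, first note that $G$ is non-discrete by Lemma \ref{lem:tnlf} (since $G$ is solvable, hence locally solvable). Proposition \ref{prop:ms} then produces a closed normal subgroup $N \cong \Z_p$ with $C_G(N) = N$ and a continuous monomorphism $j: G/N \hookrightarrow \Aut(N) \cong \Z_p \times F_p$. If $G = N$ we are in case (a); otherwise, pick any $x \in G \setminus N$ (non-torsion since $G$ is torsionfree), and Proposition \ref{prop:tri} yields a compact subgroup $G_1 := N \rtimes \overline{\langle x\rangle}$ of $G$ already isomorphic to $M_{p,n}$ or $T_n$. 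The remaining---and main---task is to show $G = G_1$.

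My strategy is to establish that $G/N$ is topologically isomorphic to $\Z_p$. Once this is proved, I choose $x$ so that $q(x) \in G/N \setminus p(G/N)$, where $q: G \to G/N$ is the quotient map; then $q(x)$ is a topological generator of $G/N$, so $q(\overline{\langle x\rangle}) = \overline{\langle q(x)\rangle} = G/N$ and hence $G = q^{-1}(G/N) = N\cdot \overline{\langle x\rangle} = G_1$. Now $G/N$ is torsionfree (Proposition \ref{prop:abc}(b)), totally disconnected locally compact abelian (as a quotient of $G$), and compactly covered (inherited from $G$ via Proposition \ref{prop:ndhmlc}); if $G/N$ were discrete, compact coverage plus torsionfreeness would force $G/N = \{e\}$, returning us to case (a). Otherwise, let $U$ be a compact open subgroup of $G/N$ (van Dantzig). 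A key observation is that for every $y \in G/N$, the compact open subgroup $V := \overline{\langle y\rangle} + U$ satisfies $V \cong \Z_p$: indeed $j(V)$ is a non-trivial compact torsionfree subgroup of $\Z_p \times F_p$, and since the torsion of $\Z_p \times F_p$ is $\{0\} \times F_p$, the projection $\pi_1$ is injective on $j(V)$, so $j(V) \cong \pi_1(j(V))$ is a non-trivial compact subgroup of $\Z_p$, hence $\cong \Z_p$; a continuous bijection between compact Hausdorff groups is a topological isomorphism, so $V \cong \Z_p$.

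The main obstacle is to produce a maximum in the family $\mathcal V$ of all compact open subgroups of $G/N$ that are $\cong \Z_p$ and contain $U$. This family is non-empty (it contains $U$) and totally ordered by inclusion, since for $V_1, V_2 \in \mathcal V$ the sum $V_1 + V_2$ again lies in $\mathcal V$ by the previous observation, and inside $V_1 + V_2 \cong \Z_p$ two subgroups are comparable. To rule out an infinite strictly ascending chain $V_0 \subsetneq V_1 \subsetneq \cdots$ in $\mathcal V$: the indices $[V_i : U] = p^{k_i}$ grow strictly, so the union $W := \bigcup_i V_i$ is $p$-divisible, because any $w \in V_j$ lies in $V_j = p^{k_i - k_j} V_i \subseteq pW$ for any $i > j$. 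But then $j(W)$ would be a non-trivial $p$-divisible torsionfree subgroup of $\Z_p \times F_p$, which is impossible: the $p$-divisible part of $\Z_p \times F_p$ is $\{0\} \times F_p$ when $p > 2$ and $\{0\}$ when $p = 2$, both consisting of torsion elements. Hence a maximum $V_{\max}$ exists, and for any $y \in G/N$ the subgroup $\overline{\langle y\rangle} + V_{\max}$ lies in $\mathcal V$ and contains $V_{\max}$, forcing equality; this yields $y \in V_{\max}$, so $G/N = V_{\max} \cong \Z_p$, as required.
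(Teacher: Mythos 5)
Your argument is correct, and for the heart of the proof it takes a route genuinely different from the paper's. Both proofs share the same skeleton (reduce via Proposition \ref{prop:ms} and Proposition \ref{prop:tri}, then show $G/N$ is topologically isomorphic to $\Z_p$, then conclude $G=N\cdot\overline{\langle x\rangle}$), but the middle step diverges: the paper fixes one $x_1\notin N$, uses $G_1/N\cong\Z_p$ together with automatic continuity of abstract homomorphisms out of $\Z_p$ to make its monomorphism $f:G/N\to\Z_p$ continuous on an open subgroup, and then gets $G/N\cong\Z_p$ from an index computation (and finishes with the totally ordered family $\{G_x\}_{x\notin N}$); you instead work directly with the conjugation representation $j$, show every compact open subgroup of $G/N$ containing a fixed $U$ is a copy of $\Z_p$, rule out infinite ascending chains by the neat $p$-divisibility argument inside $\Z_p\times F_p$, and then finish more cleanly by picking $x$ with $q(x)$ a topological generator. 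Your divisibility trick also absorbs the $p=2$ case uniformly, whereas the paper needs the ad hoc ``multiply by $2$'' correction to kill the $F_2$-component; the paper's approach, on the other hand, only ever invokes $j$ as an abstract monomorphism. That is the one caveat for your version: you call $j$ a \emph{continuous} monomorphism, but Proposition \ref{prop:ms} as stated produces $j$ only algebraically, and your key observation ($j(V)$ compact, $\pi_1\circ j\restriction_V$ a continuous bijection onto a closed subgroup of $\Z_p$) genuinely needs continuity. This is true and takes one line -- if $t$ is a topological generator of $N\cong\Z_p$, then $j\circ q$ is the continuous map $g\mapsto gtg^{-1}\in N\cong\Z_p$, landing in $\Z_p^*$, and $q$ is open -- but you should state it, since nothing you cite supplies it.
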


 \begin{proof} $(2)\Rightarrow (1):$ If $G\cong \Z_p$, then it is \HM\ by Prodanov's theorem. If $G\cong M_{p,n}$, then it is \HM\ by Example \ref{padics:rtimes:padics}(a).
If $G\cong T_n$ for some $n$, then $G$ is \HM\ by Example \ref{padics:rtimes:padics}(b).

$(1)\Rightarrow (2)$:  If $G$ is abelian, then $G\cong \Z_p$ for some prime $p$, by Prodanov's theorem.
	
Assume from now on that $G$ is non-abelian, so $G \ne N$ and let $x_1\in G \setminus N$. By  Proposition \ref{prop:tri},
$N$ trivially meets $\overline{\langle x_1\rangle}$, and the subgroup $G_1 = N\cdot \overline{\langle x_1\rangle}$ is isomorphic to $\Z_p \rtimes_\alpha \Z_p$,
for some faithful action $\alpha.$ Clearly, $G' \leq N\leq G_1$, so $G_1$ is normal.
Moreover, $G_1/N$ is topologically isomorphic to $\overline{\langle x_1\rangle}$, hence to $\Z_p$ by Lemma \ref{Prod+Steph:lemma}.
	
Now we claim that there is a monomorphism $f:G/N \to \Z_p$. Since $G$ is torsionfree, $G/N$ is also torsionfree by Proposition \ref{prop:abc}. Moreover, according to Proposition \ref{prop:ms}, there exists a group monomorphism $j:G/N \to \Z_p\times F_p$, so also  $W=j(G/N)$ is torsionfree. If $p> 2$, then $W\leq\Z_p$ by Proposition \ref{prop:dich}, so we simply take $f=j$. If $p=2$, the subgroup $2W$ of $W$ is isomorphic to a subgroup of $\Z_2\times \{e_{F_2}\}$. We define $\phi :\Z_2\times F_2\to \Z_2$ by $\phi(x)= 2x$. Then $\phi\upharpoonright_W$ is an isomorphism between $W$ and $2W$ since $W$ is torsionfree. Hence, $f=\phi\circ j$ is a monomorphism $G/N \to \Z_2$.

Equipping the codomain of $f$ with the $p$-adic topology, we now prove that $f$ is a topological isomorphism onto its range.
As $G_1/N$ is topologically isomorphic to $\Z_p$, the restriction $f\upharpoonright_{G_1/N }:G_1/N \to \Z_p$ is continuous.
In particular, $f(G_1/N)\cong G_1/N$ is a non-trivial compact subgroup of $\Z_p$. Hence, $f(G_1/N)$ is open and $G_1/N \cong f(G_1/N) = p^k\Z_p$ for some $k\in \N$. We deduce that $f(G/N)$ is also open, so $f(G/N) = p^t\Z_p$ for some $t\in \N$. Since $N$ is a common closed normal subgroup of both $G$ and $G_1$, we get
\[
G/N \Big/ G_1/N \cong \ f(G/N)\Big/f(G_1/N)\cong p^t\Z_p\big/p^k\Z_p\cong \Z(p^{t-k}).
\]
 As $G_1/N$ is open in $G/N$ which is compact,  we obtain that $f:G/N\to f(G/N)$  is a topological isomorphism.  In particular,  $G/N$ is  topologically isomorphic to $\Z_p$.
	
	For every $x \in G\setminus N$, let
	$G_x$ be the closed subgroup of $G$    generated by $N$ and $x$.	 Consider the families $\mathcal F_1=\{G_x\}_{x\notin N}$  and $\mathcal F_2=\{q(G_x)\}_{x\notin N}$,
	where $q: G \to G/N$ is the canonical map.
	As $G/N\cong \Z_p$, the family $\mathcal F_2$ is  totally ordered by inclusion and every member of $\mathcal F_2$ has finitely many successors in $\mathcal F_2$.
	Since all the subgroups $G_x$ contain $N$, it follows that $\mathcal F_1$  is also  totally ordered and has the same property.
	In particular, the members in  $\mathcal F_1$ that contain $G_1$  are finitely many, so  there is a maximal member among them
and it coincides with $G$ as $\displaystyle \bigcup_{x\notin N} G_x=G$.  Finally, apply Proposition \ref{prop:tri}.
\end{proof}

\subsection{Non-torsionfree case}\label{Non-torsionfree case}

 Given an action $\alpha:(\Z_2,+)\times (\Z_2,+)\to (\Z_2,+)$ and $(b,c)\in \Z_2\times \Z_2$  we write  shortly $b(c)$ in place of $\alpha(b,c)$.
A map $\eps:(\Z_2,+)\to (\Z_2,+)$ is called a {\em crossed homomorphism}, if
	\begin{equation*}
	\eps(b_1+b_2)=\eps(b_1)+b_1(\eps(b_2)), \forall b_1, b_2\in \Z_2.
	\end{equation*}

The following technical result is used in the proof of Theorem \ref{prop:nonfree}, in the special case of $p=2$.

\begin{prop}\label{prop:sknmin} Let $\alpha:(\Z_2,+)\times (\Z_2,+)\to (\Z_2,+)$ be a continuous faithful action by automorphisms and let $f$ be
an automorphism of $M_{2,\alpha}= (\Z_2,+)\rtimes_{\alpha} (\Z_2, +)$ of order $2$. If the automorphism $\bar f: M_{2,\alpha}/M_{2,\alpha}' \to M_{2,\alpha}/M_{2,\alpha}'$ induced by  $f$ is identical,  then the group $G=((\Z_2,+)\rtimes_{\alpha} (\Z_2, +))\rtimes F$, where $F=\{f,Id\}$, is not \HM.

\end{prop}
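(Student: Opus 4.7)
The plan is to suppose for contradiction that $G$ is \HM, and to exhibit inside $G$ an infinite compact abelian subgroup isomorphic to $\Z_2 \times \Z(2)$; since this is not isomorphic to $\Z_p$ for any prime, Corollary~\ref{cor:spr} will then force a contradiction. First, I would introduce the map $\delta\colon M_{2,\alpha} \to M_{2,\alpha}'$, $\delta(m) = f(m)m^{-1}$, which takes values in the derived subgroup by the hypothesis that $\bar{f}$ is identity on $M_{2,\alpha}/M_{2,\alpha}'$, and verify that $\delta$ is a crossed homomorphism for the conjugation action of $M_{2,\alpha}$ on $M_{2,\alpha}'$. Because $M_{2,\alpha}' \cong \Z_2$ is torsionfree and $\mathrm{Aut}(M_{2,\alpha}') \cong \Z_2^*$ has torsion $\{\pm 1\}$, the restriction $f|_{M_{2,\alpha}'}$ is either $\mathrm{Id}$ or inversion. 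The case $f|_{M_{2,\alpha}'} = \mathrm{Id}$ is ruled out at once: applying $f$ to $f(m) = \delta(m)m$ gives $f^{2}(m) = \delta(m)^{2}m$, so $f^{2} = \mathrm{Id}$ forces $\delta(m)^{2} = e$ in $M_{2,\alpha}'$, hence $\delta \equiv e$ by torsionfreeness, hence $f = \mathrm{Id}$, contradicting that $f$ has order $2$. Therefore $f|_{M_{2,\alpha}'}$ is inversion.

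Next I would analyse $f$ on the characteristic ``first factor'' $A_1 = \Z_2 \rtimes \{1\}$ of $M_{2,\alpha}$ (the preimage of $t(M_{2,\alpha}/M_{2,\alpha}')$, characteristic by Lemma~\ref{lemma:speder}/Lemma~\ref{commTn}). The restriction $f|_{A_1}$ is an order-$\leq 2$ automorphism of $A_1 \cong \Z_2$ that extends $-\mathrm{Id}$ on the proper subgroup $M_{2,\alpha}'$, so it must itself equal $-\mathrm{Id}$. Hence $\delta((a,1)) = (-2a,1)$ for every $(a,1) \in A_1$, and this must lie in $M_{2,\alpha}'$. For $M_{2,\alpha} \cong M_{2,n}$ one would need $-2a \in 2^{n+2}\Z_2$ for every $a \in \Z_2$, impossible for $n \geq 0$; so by Proposition~\ref{prop:alpha} one must have $M_{2,\alpha} \cong T_n$, in which case $M_{2,\alpha}' = 2\Z_2 \rtimes \{1\}$ and the condition is automatic.

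With $M_{2,\alpha} = T_n = A_1 \cdot Y$, where $Y = \{0\} \times C_2^{2^n} \cong \Z_2$, the IA-property of $f$ gives $f((0,y)) = (g(y), y)$ for a continuous map $g\colon Y \to 2\Z_2$, and the homomorphism equation for $f$ translates into the crossed-homomorphism identity $g(y_1 y_2) = g(y_1) + \beta(y_1, g(y_2))$. Combining this with $f|_{A_1} = -\mathrm{Id}$ yields the closed-form expression $f((a,y)) = (g(y) - a, y)$, so that
\[
\mathrm{Fix}(f) = \{(g(y)/2,\, y) : y \in Y\};
\]
the division by $2$ is legitimate because $g(y) \in 2\Z_2$. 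The crossed-homomorphism identity for $g$ (together with the observation that $\beta(y,\cdot)$ commutes with division by $2$) then shows that $\mathrm{Fix}(f)$ is a closed subgroup of $M_{2,\alpha}$, and the second projection restricts to a continuous bijective homomorphism $\mathrm{Fix}(f) \to Y$, which is a homeomorphism by compactness. Hence $\mathrm{Fix}(f) \cong \Z_2$ topologically.

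Finally, by definition every element of $\mathrm{Fix}(f)$ commutes with $(0,f) \in F$, and $\mathrm{Fix}(f) \cap F = \{e\}$; so $H = \mathrm{Fix}(f) \cdot F \cong \mathrm{Fix}(f) \times F \cong \Z_2 \times \Z(2)$ is a closed infinite compact abelian subgroup of $G$ containing a non-trivial torsion element, and is therefore not isomorphic to $\Z_p$ for any prime. Corollary~\ref{cor:spr} then gives that $H$ is not \HM, and consequently neither is $G$. The step I expect to be the main obstacle is the proof that $\mathrm{Fix}(f)$ is in fact \emph{infinite}, i.e.\ that one has a fixed point in every coset parameterised by $y \in Y$; this rests precisely on solving $2a = g(y)$ inside $\Z_2$, for which the reduction $M_{2,\alpha}' = 2\Z_2$ obtained in the second paragraph is indispensable.
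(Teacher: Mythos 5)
Your proposal is correct and follows essentially the same route as the paper's own proof: the heart in both cases is to produce a non-trivial fixed point of $f$ in $M_{2,\alpha}$ by halving a crossed-homomorphism value that lies in $2\Z_2$, and then to combine it with the order-$2$ element $(e,f)$ to exhibit an infinite compact abelian subgroup of $G$ with non-trivial torsion, which is impossible in a \HM\ group by Prodanov's theorem (the paper phrases this last step as a failure of ($\mathcal{C}_{fn}$) via Theorem B, you invoke Corollary \ref{cor:spr} directly). The only differences are organizational and harmless: the paper splits into the cases $f\restriction_{A_1}=\pm \mathrm{Id}$ and needs just one fixed point, whereas you rule out $+\mathrm{Id}$ by the order-$2$ condition, show the hypotheses force the $T_n$-type, and compute $\mathrm{Fix}(f)\cong \Z_2$ in full --- sound extra work that the argument does not actually require.
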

\begin{proof}

	Since the group $F$ is finite we deduce that  the topological semidirect product $G=M_{2,\alpha}\rtimes F$ is well defined.  According to Theorem B, it suffices to show that $G$ does not satisfy ($\mathcal{C}_{fn}$). Consider the element $g_0:= (e,f)$ of $G$, note that it has order $2$ in $G$, and let $P=\langle g_0\rangle$. It is sufficient to find an infinite compact abelian subgroup $H$ of $G$ containing $P$. To this end, we shall provide a non-torsion element $g \in G$, commuting with $g_0$, and let $H=\overline{\langle g, g_0\rangle}$.
	In view of the semidirect product structure of $G$ and since $M_{2,\alpha}$ is  torsion free, this means to find a fixed point of $f$, i.e., an element $e\ne x\in M_{2,\alpha}$ with $f(x)  = x$,
	 since in this case $g= (x,f) \in G $ works, as $g$ commutes with $g_0$.

First of all, we need to find an explicit form of $f$. For the sake of brevity put $N = \Z_2 \times \{0\}$ and $T = \{0\}\times \Z_2$. Our hypothesis that $\bar f$ is identical allows us to claim that also the automorphism $T\to T$,	induced by $\bar f$, of the quotient $T\cong M_{2,\alpha}/N$ is identical, so
we can	write
$f(0,b)=(\eps(b),b)$ for every $b\in \Z_2$, where  $\eps : \Z_2\to \Z_2$ is a continuous crossed homomorphism, as $f$ is a continuous homomorphism.
Moreover, since $f$ is an automorphism of $M_{2,\alpha}$ and  $N\geq M_{2,\alpha}'$ it follows that  $f\restriction_{N}$ is an automorphism of $N$.
In fact, as $N\cong \Z_2$, $f\restriction_{N}$ is the multiplication by some  $m\in \Z_2^*$, and $m\in F_2=\{\pm 1\}$ as $o(x_0)=2$.
Hence, we can write $f$ as follows:
\begin{equation}\label{888}f(a,b)=f (a,0)f(0,b)=(ma,0)(\eps(b),b)=(ma+\eps(b),b).
\end{equation}
  Assume that $m= 1$, then (\ref{888}) and $f^2 = Id$ give $2 \eps(b)= 0$, hence $\eps(b)= 0$, for every $b = 0$.
Take any $b\ne 0$ in $\Z_2$ and put $x =(0,b)$. Then obviously  $f(x)  = x$ and we are done.

In case $m=-1$. Take any $b\neq0$ in $\Z_2$, so $2b\neq 0$. Recall that $\Aut (\Z_2) \cong \Z_2^* = \Z_2 \setminus 2 \Z_2 = 1 + 2\Z_2$, so  the action $\alpha$ gives $m_b\in \Z_2^* = 1 + 2\Z_2$ such that $b(\eps(b))=m_b\eps(b)$. Hence,
	\[
	\eps(2b) = \eps(b) + m_b \eps(b) = (1+m_b)\eps(b)\in 2\Z_2,
	\]
	which means there exists $s\in \Z_2$ such that $\eps(2b)=2s$. Consider the element $x = (s,2b) \in M_{2,\alpha}$, and observe that it is not torsion as
	$2b\neq 0$. Finally, $x$ is a fixed point of $f$. Indeed, $f(x) = ( -s+\eps(2b), 2b) = (-s + 2s, 2b) = (s , 2b) = x$.
\end{proof}

Here follows the counterpart of Theorem \ref{thm:free} for non-torsionfree groups, which concludes the classification of the locally compact solvable HM groups given in Theorem D.
\begin{thm}\label{prop:nonfree}
	Let $G$ be an infinite locally compact solvable non-torsionfree group. Then the following conditions are equivalent:
	\ben [(a)]
	\item  $G$ is \HM;
	\item $G$ is topologically isomorphic to $K_{p,F}$ for some prime $p$ and a non-trivial subgroup $F$ of $F_p$.
	 \een
\end{thm}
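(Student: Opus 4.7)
The direction (b) $\Rightarrow$ (a) is already established in Example \ref{ex:padic:rtimes:F}(a). For (a) $\Rightarrow$ (b) the strategy is to pin down $G/N$ and show that non-torsionfreeness forces $G$ to coincide with the subgroup $S_p$ produced in Proposition \ref{prop:abc}(c). I would start by invoking Proposition \ref{prop:ms} to obtain a normal subgroup $N\cong \Z_p$ with $N=C_G(N)$ and a monomorphism $j:G/N\hookrightarrow \Z_p\times F_p$. Since $G$ is non-torsionfree, Proposition \ref{prop:abc}(b)--(c) supplies a torsion element $x_0\in G$ of maximum order together with the identification $S_p=N\cdot \langle x_0\rangle\cong K_{p,F}$ for a non-trivial $F\leq F_p$. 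The goal reduces to showing $G=S_p$, equivalently that $G/N$ is torsion.

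For $p>2$ this is immediate from Proposition \ref{prop:dich}: $j(G/N)$ cannot lie in the torsionfree $\Z_p$, hence $j(G/N)\leq F_p$ is finite and $G=S_p$.

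The main obstacle is the case $p=2$, which I handle by contradiction. Assume $G/N$ has a non-torsion element $q(x_1)$ with $x_1\in G$ non-torsion. Proposition \ref{prop:tri} yields $H:=N\cdot \overline{\langle x_1\rangle}\cong M_{2,\alpha}$ for some faithful action $\alpha$. Since $G/N$ is abelian, $H$ is normal in $G$; moreover $q(H)\cong \overline{\langle x_1\rangle}\cong \Z_2$ is torsionfree and embeds into $\Z_2\times \{0\}$ inside $\Z_2\times \Z(2)$, so $q(x_0)\notin q(H)$ and $x_0\notin H$. Hence the subgroup $H\cdot \langle x_0\rangle$ of $G$ is a semidirect product $H\rtimes \langle x_0\rangle$, and conjugation $f:=\varphi_{x_0}\restriction_H$ is an order-$2$ automorphism of $H$.

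At this point I would apply the proof technique of Proposition \ref{prop:sknmin} to $f$. Its literal hypothesis $\bar f=\mathrm{Id}$ on $H/H'$ need not hold here (for $H\cong M_{2,n}$, $f$ acts as $-1$ on the $\Z(2^{n+2})$-factor of $H/H'$), but the argument only needs: $N$ is characteristic in $H$ (because $N=C_H(H')$); $f\restriction_N$ is a non-trivial involution of $\Aut(\Z_2)\cong \Z_2^*$, hence $f\restriction_N=-\mathrm{Id}$; and $f$ induces the identity on $H/N$ because $G/N$ is abelian. The case $m=-1$ of that proof then produces a non-torsion fixed point $y\in H$ of $f$, and $\overline{\langle y,x_0\rangle}\cong \Z_2\times \Z(2)$ becomes an infinite compact abelian subgroup of $G$ not isomorphic to any $\Z_p$, contradicting Lemma \ref{Prod+Steph:lemma}.
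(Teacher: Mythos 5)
Your proposal is correct, and its skeleton is the same as the paper's: reduce via Propositions \ref{prop:ms} and \ref{prop:abc} to showing $G=S_p$, settle $p>2$ with Proposition \ref{prop:dich}, and for $p=2$ build $L=N\cdot\overline{\langle x_1\rangle}\cong M_{2,\alpha}$ from a hypothetical non-torsion element of $G/N$ (Proposition \ref{prop:tri}) and contradict hereditary minimality using the action of $x_0$. Where you genuinely diverge is the final step. The paper forms $L\rtimes\langle x_0\rangle$ and invokes Proposition \ref{prop:sknmin} as stated, asserting that conjugation by $x_0$ induces the identity on $L/L'$; as you note, that assertion is not justified, and it actually fails when $L\cong M_{2,n}$, since $\varphi_{x_0}\restriction_N=-\mathrm{Id}$ (being a non-trivial involution of $\Aut(\Z_2)$), so it acts as $-1\ne 1$ on the torsion factor $N/L'\cong\Z(2^{n+2})$ of $L/L'$ computed in Lemma \ref{lemma:speder}. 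You instead rerun the proof of Proposition \ref{prop:sknmin} under the hypotheses it really uses: $f$ of order $2$, $f\restriction_N=-\mathrm{Id}$, and $f$ inducing the identity on $L/N$, the latter because $G/N$ is abelian; the $m=-1$ computation then yields a non-torsion fixed point $y$, and $\overline{\langle y,x_0\rangle}$ is an infinite closed abelian subgroup with torsion, contradicting Lemma \ref{Prod+Steph:lemma} (the paper's own contradiction via Theorem B is the same idea). So your variant is not only valid, it repairs the one shaky step in the published argument; the price is that you use the internals of Proposition \ref{prop:sknmin} rather than its statement, and a cleaner fix would be to restate that proposition with ``$f$ induces the identity on $M_{2,\alpha}/(\Z_2\times\{0\})$'' as the hypothesis. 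Two minor points: your claim that the torsionfree group $q(H)\cong\Z_2$ embeds into $\Z_2\times\{0\}$ is neither needed nor automatic (the graph of the reduction $\Z_2\to\Z(2)$ is a torsionfree copy of $\Z_2$ in $\Z_2\times\Z(2)$ not contained in $\Z_2\times\{0\}$); $q(x_0)\notin q(H)$ already follows from torsionfreeness of $q(H)$, and indeed $x_0\notin H$ because $H$ itself is torsionfree. Likewise, ``$N$ characteristic in $H$'' is superfluous: $\varphi_{x_0}(N)=N$ because $N$ is normal in $G$.
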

\begin{proof}
 $(b)\Rightarrow (a)$: If $G\cong K_{p,F}$, then  $G$ is \HM\ by Example \ref{ex:padic:rtimes:F}.

 $(a)\Rightarrow (b)$: In the notation of Proposition \ref{prop:abc},	let $S_p=N\cdot \langle x_0\rangle$, where $x_0$ is a torsion element of $G$ of maximal order. Moreover,  $t(G)\subseteq S_p$ and $S_p\cong K_{p,F}$ for some non-trivial $F\leq F_p$. So, it suffices to show that $G=S_p$.

By contradiction, pick $x\in G\setminus S_p$ and observe that $x\notin N$ and $x$ is non-torsion. The subgroup  $T=\overline{\langle x \rangle}$ is isomorphic to $\Z_q$ for some prime $q$ by Prodanov's theorem. Moreover,
\begin{equation}\label{999}
T\cap N=\{e\},
\end{equation}
by Proposition  \ref{prop:tri} If $p>2$, then $G/N$ is isomorphic to a subgroup of $F_p$ by Proposition \ref{prop:dich} and Proposition \ref{prop:abc}(b). Hence $[G:N]<\infty$, and this contradicts (\ref{999}), as $T$ is infinite.

Now assume that $p=2$. In view of the normality of $N\cong \Z_p$ in $G$ and (\ref{999}), we can apply Lemma \ref{lem:difp} to deduce that $q=2$ as well. So
 $L=NT$ is isomorphic to $N\rtimes T\cong M_{2,\alpha}$, where $\alpha$ is a faithful action, by Lemma \ref{lemma:kerhm}.  In the sequel we identify $L = NT\cong N\rtimes T$ and $M_{2,\alpha}$. Since $G'\leq N \leq L$ by Proposition \ref{prop:ms},  it follows that $L$ is normal in $G$.
As $x_0$ has order two and $L$ is torsionfree, we obtain  $$G\geq L\rtimes \langle  x_0 \rangle \cong (\Z_2\rtimes_{\alpha} \Z_2)\rtimes_{\beta}F_2,$$ where $\beta$  is a faithful action. Indeed, if $\ker\beta$ is non-trivial,  then $\ker\beta=F_2$ and we deduce that
$$
(\Z_2\rtimes_{\alpha} \Z_2)\rtimes_{\beta}F_2\cong (\Z_2\rtimes_{\alpha} \Z_2)\times F_2,
$$ contradicting the fact that $G$ is \HM.
	
The action $\beta$ coincides with the action $\phi:L\to L$ by conjugation by $x_0$, and $o(\phi)=2$ as $o(x_0)=2$ and $x_0\notin C_G(N)=N\leq L$. Clearly $\phi(L')=L'$, as $L'$ is a normal subgroup of $G$,  so  $\phi$ induces an automorphism $\bar \phi: L/L' \to L/L'$. Since $\bar \phi $ is still an internal automorphism of $L/L'$ and this quotient is abelian,	 we deduce that $\bar \phi = Id_{L/L'}$. Now we can apply  Proposition \ref{prop:sknmin} to $f = \phi$ to deduce that $(\Z_2\rtimes_{\alpha} \Z_2)\rtimes F_2$ is not \HM, contradicting  the fact that $G$ is \HM.	
\end{proof}

\begin{corol}
Let $G$ be an infinite hereditarily minimal locally compact group, which is either compact or locally solvable.

Then $G$ contains a subgroup $K \cong \Z_p$ for some prime $p$, such that its normalizer $N_G(K)$ is isomorphic to one of the groups in $(a)$, $(b)$ or $(c)$, in Theorem D.
\end{corol}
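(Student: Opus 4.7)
The plan is to build the desired subgroup $K \cong \Z_p$ by starting from an arbitrary copy of $\Z_p$ inside $G$ and then passing to its centralizer, thereby obtaining a \emph{self-centralizing} copy. This will force the normalizer $N_G(K)$ to be metabelian, putting us in position to invoke Theorem D.

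First I would observe that $G$ is non-discrete: if $G$ is compact, this is immediate since $G$ is infinite, while if $G$ is locally solvable this is Lemma \ref{lem:tnlf}. Proposition \ref{prop:ndhmlc}(2) then furnishes a (topological) copy $K_0 \cong \Z_p$ inside $G$ for some prime $p$. Next, I set $K = C_G(K_0)$. This is a closed subgroup of $G$, hence locally compact; it inherits from $G$ the property of being \HM\ and of being either compact or locally solvable; and it is infinite since $K \supseteq K_0$. Moreover, $K_0 \leq Z(K)$ by construction, so $Z(K) \neq \{e\}$. Applying Theorem C to $K$ yields $K \cong \Z_p$. Since $K$ is abelian we have $K \leq C_G(K)$, and conversely $C_G(K) \leq C_G(K_0) = K$, so $K = C_G(K)$ is self-centralizing.

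Finally, I would examine $N_G(K)$. Since $K$ is closed, $N_G(K)$ is a closed subgroup of $G$ (as the intersection over $h \in K$ of the closed sets $\{g : ghg^{-1} \in K\}$ and their analogues for $g^{-1}$), hence locally compact; and as a subgroup of $G$ it is \HM. The self-centralizing property gives
\[
N_G(K)/K \;=\; N_G(K)/C_{N_G(K)}(K) \;\hookrightarrow\; \Aut(K) \;\cong\; \Aut(\Z_p) \;\cong\; \Z_p \times F_p,
\]
which is abelian; hence $N_G(K)' \leq K$, so $N_G(K)$ is metabelian, in particular solvable. Being an infinite, locally compact, solvable, \HM\ group, $N_G(K)$ is topologically isomorphic to one of the groups in $(a)$, $(b)$ or $(c)$ of Theorem D (note that the degenerate case $N_G(K) = K \cong \Z_p$ falls under $(a)$ with $F = \{1\}$).

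The key idea — and essentially the only thing that requires insight — is the replacement of $K_0$ by its centralizer $K = C_G(K_0)$. Without this passage, one cannot hope to control $N_G(K_0)$ directly; once performed, Theorem C immediately collapses $K$ to a copy of $\Z_p$, and the abelianness of $\Aut(\Z_p)$ then automatically places $N_G(K)$ within the hypotheses of Theorem D. No further technical obstacle is anticipated.
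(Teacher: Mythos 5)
Your argument is correct, and it reaches the conclusion by a route that is recognisably close to, but not identical with, the paper's. The paper takes $K$ to be simply the copy of $\Z_p$ provided by Proposition \ref{prop:ndhmlc}(2) (after noting non-discreteness via Lemma \ref{lem:tnlf}, exactly as you do), observes that $N=N_G(K)$ is closed, infinite, \HM, and still compact or locally solvable, and then gets metabelianity of $N$ in one stroke by citing Proposition \ref{prop:ms}, since $K$ is a non-trivial normal abelian (hence solvable) subgroup of $N$; Theorem D then finishes. You instead decline to invoke Proposition \ref{prop:ms} and inline the relevant argument: you replace $K_0$ by $K=C_G(K_0)$, collapse it to $\Z_p$ via Theorem C, note $K=C_G(K)$, and deduce that $N_G(K)/K$ embeds in $\Aut(\Z_p)\cong \Z_p\times F_p$, hence $N_G(K)$ is metabelian. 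This is essentially a hand-made special case of Proposition \ref{prop:ms}, whose own proof uses the same centralizer-plus-Theorem-C mechanism and the same embedding $(\dag)$; so the underlying mathematics coincides, and the difference is one of packaging. What your version buys is a self-contained proof with a slightly more explicit witness ($K$ self-centralizing, with $N_G(K)/K$ abelian), at the cost of redoing work the paper has already encapsulated; what the paper's version buys is brevity and the observation that the normalizer of \emph{any} copy of $\Z_p$ from Proposition \ref{prop:ndhmlc}(2) already works, with no need to pass to a centralizer. All the individual steps you use (closedness of centralizers and normalizers, inheritance of compactness/local solvability and of hereditary minimality, applicability of Theorem C and Theorem D, and the degenerate case $N_G(K)=K\cong K_{p,\{1\}}$ under item (a)) check out.
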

\begin{proof}
As $G$ is non-discrete by Lemma \ref{lem:tnlf}, the existence of $K$ is guaranteed by Proposition \ref{prop:ndhmlc}(2). The subgroup $N = N_G(K)$ is closed in $G$ and contains $K$, so $N$ satisfies all the properties of $G$ listed in the statement. As $K$ is normal in $N$, we can apply Proposition  \ref{prop:ms} to conclude that $N$ is metabelian. Finally, Theorem D applies to $N$.
\end{proof}

\subsection{Hereditarily  totally minimal topological  groups}\label{Hereditarily  totally minimal topological  groups}

 In this subsection we classify the infinite locally compact solvable hereditarily totally minimal groups. We also provide an equivalent condition for a non-discrete locally compact  group with non-trivial center to be HTM.
\begin{defi}\label{def:htm}
Call a topological group $G$ hereditarily totally minimal, if every subgroup of $G$ is  totally  minimal.
\end{defi}

The following concept has a key role in the Total Minimality Criterion (see Fact \ref{fac:TMC}).
\begin{defi}
A subgroup $H$ of a topological group $G$ is totally dense if for every closed normal subgroup $N$ of $G$ the intersection $N\cap H$ is dense in $N$.
\end{defi}

\begin{fact} \label{fac:TMC}
  \cite[Total Minimality Criterion]{DP} Let $H$ be a dense subgroup of a topological group $G$. Then $H$ is totally minimal if and only if $G$ is totally minimal and $H$ is totally dense  in $G$.
\end{fact}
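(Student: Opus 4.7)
The plan is to prove both directions of the Total Minimality Criterion by reducing to the ordinary Minimality Criterion (Fact \ref{Crit}(1)) applied to appropriate quotients. The main technical tool I would use throughout is that if $K$ is a normal subgroup of a dense subgroup $H\leq G$, then the closure $\overline{K}$ of $K$ in $G$ is normal in $G$: indeed, conjugation by any $h\in H$ fixes $K$ setwise, hence fixes $\overline{K}$; then by continuity of conjugation and density of $H$, every $g\in G$ fixes $\overline{K}$ as well.

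For the direction ($\Leftarrow$), assume $G$ is totally minimal and $H$ is totally dense in $G$. Let $K$ be an arbitrary closed normal subgroup of $H$. Then $\overline{K}$ is closed normal in $G$ by the above remark, so $G/\overline{K}$ is minimal by total minimality of $G$. The canonical projection $\pi\colon G\to G/\overline{K}$ restricted to $H$ has image $\pi(H)$, which is dense in $G/\overline{K}$, and I would first observe that $H\cap \overline{K}=K$ (using total density, $H\cap\overline{K}$ is dense in $\overline{K}$, while it clearly contains $K$, which is closed in $H$). Thus $\pi(H)\cong H/K$. To apply the Minimality Criterion to $\pi(H)\leq G/\overline{K}$ it suffices to check that $\pi(H)$ is essential: given a non-trivial closed normal subgroup $L/\overline{K}$ of $G/\overline{K}$ (so $L\supsetneq \overline{K}$ is closed normal in $G$), total density gives $H\cap L$ dense in $L$, so $H\cap L\not\subseteq \overline{K}$, and picking $h\in H\cap L$ with $h\notin\overline{K}$ produces a non-trivial element of $\pi(H)\cap L/\overline{K}$. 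Hence $H/K$ is minimal, and since $K$ was arbitrary, $H$ is totally minimal.

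For the direction ($\Rightarrow$), assume $H$ is totally minimal and dense in $G$. The Minimality Criterion already gives that $G$ is minimal and $H$ is essential in $G$. Now fix any closed normal subgroup $N$ of $G$; I need to verify both that $G/N$ is minimal and that $H\cap N$ is dense in $N$. Set $C=\overline{H\cap N}\leq N$, which is closed normal in $G$ by the preliminary remark. The image of $H$ in $G/C$ is (topologically) isomorphic to $H/(H\cap N)$, which is minimal by hypothesis; being a minimal dense subgroup, it forces $G/C$ itself to be minimal, and then the essentiality half of the Minimality Criterion applies inside $G/C$. But by construction $\pi_C(H)$ meets $N/C$ only in the identity, so the closed normal subgroup $N/C$ must be trivial, i.e.\ $N=C=\overline{H\cap N}$, which is the total density required. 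Finally, applying the Minimality Criterion again to the dense minimal subgroup $H/(H\cap N)\hookrightarrow G/N$ yields that $G/N$ is minimal, so $G$ is totally minimal.

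The main obstacle I anticipate is the subtle point that the natural continuous bijection from $H/(H\cap N)$ (with its quotient topology) onto $\pi(H)\subseteq G/\overline{H\cap N}$ (with its subspace topology) is a topological isomorphism; the subspace topology is a priori coarser, but since it is Hausdorff and the quotient topology is minimal, minimality of $H/(H\cap N)$ forces the two to coincide. A second, standard, technical fact I invoke without elaboration is that a dense subgroup which is itself minimal forces the ambient Hausdorff group to be minimal — this is used to pass from minimality of $\pi(H)$ to that of $G/C$ and of $G/N$.
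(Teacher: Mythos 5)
The paper itself offers no proof of this Fact --- it is quoted from \cite{DP} --- so there is no in-paper argument to compare against, and your proposal has to stand on its own. Your ($\Rightarrow$) direction does: the remark that the closure of a subgroup normal in the dense subgroup $H$ is normal in $G$, the choice $C=\overline{H\cap N}$, the identification of $\pi_C(H)$ with $H/(H\cap N)$ (your ``obstacle'' resolution is legitimate there, because $H/(H\cap N)$ is minimal \emph{by hypothesis}, so the a priori coarser Hausdorff subspace topology must coincide with the quotient topology), and the essentiality argument forcing $N=\overline{H\cap N}$, hence $G/N=G/C$ minimal, all work.

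The ($\Leftarrow$) direction, however, has a genuine gap at the final step ``Hence $H/K$ is minimal.'' The Minimality Criterion gives you minimality of $\pi(H)$ with the topology \emph{induced from} $G/\overline K$; to conclude that $H/K$ with its \emph{quotient} topology is minimal you need these two topologies to coincide, and minimality of a coarser Hausdorff group topology never implies minimality of a finer one (the circle with its compact topology is minimal, its discrete refinement is not). Your announced resolution of the identification --- ``minimality of $H/(H\cap N)$ forces the two to coincide'' --- is circular in this direction, since minimality of $H/K$ is exactly what is being proved. The gap is reparable: because $K=H\cap\overline K$ is dense in $\overline K$, the canonical bijection $H/K\to\pi(H)$ is open. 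Indeed, given a neighborhood $U$ of $e$ in $G$, choose neighborhoods $V,W$ of $e$ with $VW\subseteq U$; if $h\in H$ and $\pi(h)\in\pi(V)$, write $h=vn$ with $v\in V$, $n\in\overline K$, pick $k\in K$ with $nk^{-1}\in W$ (density of $K$ in $\overline K$), so that $hk^{-1}=v(nk^{-1})\in U\cap H$ and $\pi(h)=\pi(hk^{-1})\in\pi(U\cap H)$; thus $\pi(V)\cap\pi(H)\subseteq\pi(U\cap H)$. Inserting this lemma (valid whenever $H\cap N$ is dense in the closed normal subgroup $N$) closes the gap, and with it both directions of your argument are complete.
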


As hereditarily totally minimal groups are HM, the groups $K_{p,F}, M_{p,n}$ and $T_n$ are the only locally compact solvable groups we need to consider, according to Theorem D. In the next proposition we prove that the groups $K_{p,F}$ are hereditarily totally minimal.

\begin{prop}\label{prop:kpfhtm}
Let $p$ be a prime and $F\leq F_p$. Then $K_{p,F}$ is hereditarily totally minimal.
\end{prop}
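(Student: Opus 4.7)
The plan is to apply the Total Minimality Criterion (Fact \ref{fac:TMC}) to each subgroup $H\leq K_{p,F}$. Since $\overline H$ is closed in the compact group $K_{p,F}$ it is itself compact, hence totally minimal, so it remains only to show that $H$ is totally dense in $\overline H$, i.e.\ that $H\cap N$ is dense in $N$ for every closed normal subgroup $N$ of $\overline H$.

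First I identify $\overline H$ abstractly. Being a closed subgroup of the HM group $K_{p,F}$ (Example \ref{ex:padic:rtimes:F}), $\overline H$ is itself HM, compact, and solvable, so Theorem D forces it to be finite, or (topologically) isomorphic to $K_{q,F'}$, $M_{q,n}$, or $T_n$. But $\overline H$ is $\Z_p$-by-finite: its normal subgroup $\overline H\cap(\Z_p\times\{1\})$ is closed in $\Z_p$, hence isomorphic to $\Z_p$ or trivial, and has finite index in $\overline H$. On the other hand $M_{q,n}/\Z_q$ and $T_n/\Z_2$ are both infinite, so $M_{q,n}$ and $T_n$ are not $\Z_q$-by-finite. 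This excludes those two options and forces $q=p$. The finite case is trivial and the case $\overline H\cong\Z_p=K_{p,\{1\}}$ is Fact \ref{DS-theorem}; so I may assume $\overline H\cong K_{p,F'}$ for some $\{1\}\ne F'\leq F$ and, since total density is an abstract topological-group property, work in the standard model $\overline H=\Z_p\rtimes F'$.

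Set $H_0=H\cap(\Z_p\times\{1\})$. Since the projection $\pi':K_{p,F'}\to F'$ is continuous onto a finite discrete quotient, density of $H$ in $\overline H$ forces $H_0$ to be dense in $\Z_p\times\{1\}\cong\Z_p$, hence totally dense by Fact \ref{DS-theorem}. The conjugation identity $(c,1)(a,t)(c,1)^{-1}=(a+(1-t)c,\,t)$ shows that every closed normal $N$ of $\overline H$ satisfies $(1-t)\Z_p\subseteq N\cap(\Z_p\times\{1\})$ for all $t\in\pi'(N)$. Writing $N\cap(\Z_p\times\{1\})=p^m\Z_p\times\{1\}$, this forces either $\pi'(N)=\{1\}$ (giving $N=p^m\Z_p\times\{1\}$), or $\pi'(N)\ne\{1\}$; in the latter case $(1-t)\in\Z_p^*$ when $p>2$ yields $m=0$ and $N=\Z_p\rtimes F''$ for some $F''\leq F'$, while for $p=2$ one additionally obtains the twisted subgroup $N=2\Z_2\,\sqcup\,((1+2\Z_2)\times\{-1\})$. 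Density of $H\cap N$ in $N$ reduces in each case to total density of $H_0$ in $\Z_p$: for $N=p^m\Z_p\times\{1\}$ it is immediate; for $N=\Z_p\rtimes F''$, pick $(a_t,t)\in H$ for each $t\in F''$ (possible since $\pi'(H)=F'\supseteq F''$) and note that $(a_t,t)H_0\subseteq H\cap(\Z_p\times\{t\})$ is already dense in $\Z_p\times\{t\}$ because $tH_0$ is dense in $\Z_p$; for the twisted $N$ one intersects $(a,-1)H_0$ with the prescribed parity class, reducing to the density of $H_0\cap (c+2\Z_2)$ in $c+2\Z_2$ for $c\in\{0,1\}$, which again follows from total density of $H_0$ in $\Z_2$. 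The main obstacle is this final residue-class bookkeeping for $p=2$; for $p>2$ every non-central closed normal subgroup contains $\Z_p\times\{1\}$ entirely and the argument is clean.
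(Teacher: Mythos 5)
Your proof works in substance, but it takes a much longer road than the paper's, and the extra machinery is where the rough spots live. The paper's argument is essentially this: for an infinite subgroup $H\leq K_{p,F}$ (finite subgroups being compact, hence trivially totally minimal), the closure $\overline H$ is an infinite compact \HM\ group, so by Proposition \ref{prop:ndhmlc}(2) it has no finite non-trivial normal subgroups; hence every non-trivial closed normal subgroup $N$ of $\overline H$ is infinite, so (as $F$ is finite) $N\cap(\Z_p\rtimes\{1\})=p^n\Z_p\rtimes\{1\}$ for some $n$, and this subgroup is \emph{open} in $K_{p,F}$. Thus $N$ is open in $\overline H$, and plain density of $H$ already gives $\overline{H\cap N}=N$; Fact \ref{fac:TMC} finishes. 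Your conjugation identity $(c,1)(a,t)(c,1)^{-1}=(a+(1-t)c,t)$ proves exactly this openness by hand (it gives $N\supseteq(1-t)\Z_p\rtimes\{1\}$, hence $N\supseteq p^m\Z_p\rtimes\{1\}$ in every case), but you do not exploit it: once $N$ is known to be open, every set you later need to hit ($\Z_p\times\{t\}$, or a coset $(c+2\Z_2)\times\{-1\}$) is an open subset of $\overline H$, so density of $H$ alone suffices. In particular the appeal to Theorem D to identify $\overline H$ with a standard model $\Z_p\rtimes F'$, and the residue-class bookkeeping for $p=2$, are avoidable; note also that ``total density of $H_0$'' is not quite the right tool in the last step, since $1+2\Z_2$ is a coset and not a subgroup — what you actually use there is density of $H$ in an open subset of $\overline H$.

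Two steps are genuinely incomplete as written, though both are repairable. First, the exclusion of $M_{q,n}$ and $T_n$: ``$M_{q,n}/\Z_q$ is infinite, hence $M_{q,n}$ is not $\Z_q$-by-finite'' is a non sequitur, because being $\Z_q$-by-finite only requires \emph{some} closed normal subgroup isomorphic to $\Z_q$ of finite index, not the canonical one. What you need is that $M_{q,n}$ and $T_n$ have no open abelian subgroup; this is true (an open abelian $A$ would meet $\Z_q\rtimes\{1\}$ in some $q^k\Z_q\neq\{0\}$, and an element $(x,c)\in A$ with $c\neq 1$ cannot centralize a non-zero $(z,1)$, since the action is faithful multiplication on the torsionfree domain $\Z_q$; hence $A\leq \Z_q\rtimes\{1\}$, contradicting that $A$ has finite index), but it has to be argued. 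Second, for $p=2$ your list of closed normal subgroups of $\Z_2\rtimes\{\pm1\}$ omits $2\Z_2\rtimes\{\pm1\}$ (the case where the $(-1)$-part of $N$ is the even coset); your coset argument handles it verbatim, but the enumeration as stated is not exhaustive. Both issues evaporate if you use the openness shortcut above, which is the paper's route.
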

\begin{proof}
Let $H$ be an infinite subgroup of $K_{p,F}$ and we have to prove that $H$ is totally minimal. As $K_{p,F}$ is compact,  it suffices to show that $H$ is totally dense in $\overline{H}$, by  Fact \ref{fac:TMC}. To this aim, let $N$ be a non-trivial closed normal subgroup of $\overline{H}$. As $\overline{H}$ is an infinite compact HM group, its subgroup $N$ is infinite by Proposition \ref{prop:ndhmlc}, so it intersects non-trivially the subgroup $\Z_p\rtimes \{1\}$. Moreover, there exists $n\in \N$ such that $N\geq N\cap (\Z_p\rtimes \{1\})= p^n\Z_p\rtimes \{1\}$. This implies that $N$ is open in $K_{p,F}$, so in $\overline{H}$, hence $\overline{H\cap N}=N$.
\end{proof}

The following Lemma will be used in the proof of Proposition \ref{prop:mpntn}.
\begin{lemma}\label{lemma:strong}
If $G$ is a hereditarily totally minimal group, then all quotients of  $G$ are hereditarily totally minimal.
\end{lemma}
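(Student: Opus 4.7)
The plan is a straightforward pullback argument using the definition of hereditary total minimality together with the fact (easily verified from the definitions) that the class of totally minimal groups is closed under Hausdorff quotients.

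First I fix a closed normal subgroup $N$ of $G$ with quotient map $q \colon G \to G/N$, and let $H$ be an arbitrary subgroup of $G/N$. The goal is to show that $H$ is totally minimal. I consider the preimage $K = q^{-1}(H)$, which is a subgroup of $G$ containing $N$ as a closed normal subgroup. The restriction $q\restriction_K \colon K \to H$ is a continuous surjective homomorphism, and since $q$ is an open map and $K = q^{-1}(H)$, the restriction $q\restriction_K$ is open as well; its kernel is $N$. Consequently $K/N$ is topologically isomorphic to $H$.

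Next I invoke the hypothesis: since $G$ is hereditarily totally minimal, the subgroup $K$ is totally minimal. I then observe that any Hausdorff quotient of a totally minimal group is totally minimal; indeed, if $M/N$ is a closed normal subgroup of $K/N$, then $M$ is a closed normal subgroup of $K$ containing $N$, and by the third isomorphism theorem
\[
(K/N)\big/(M/N) \cong K/M,
\]
which is minimal because $K$ is totally minimal. Hence $K/N$ is totally minimal, and therefore so is $H$, which completes the proof that $G/N$ is hereditarily totally minimal.

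There is essentially no obstacle; the only thing to be careful about is the openness of $q\restriction_K$, which is what guarantees that the abstract algebraic isomorphism $K/N \cong H$ is actually a topological isomorphism and hence transfers total minimality.
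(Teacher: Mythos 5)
Your proof is correct and follows essentially the same route as the paper: pull the subgroup $H\leq G/N$ back to $K=q^{-1}(H)$, use hereditary total minimality of $G$ to get total minimality of $K$, and transfer it to $H$ through the open restriction of the quotient map, using stability of total minimality under Hausdorff quotients. The only cosmetic difference is that the paper deduces openness of $q\restriction_K$ from the open mapping theorem (since $K$ is totally minimal), whereas you obtain it from the saturation $K=q^{-1}(H)$ together with openness of $q$; both are valid.
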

\begin{proof}
Let $N$ be a closed normal subgroup of  $G$ and let  $q:G \to G/N$ be the quotient map. Take a subgroup $D$
of $G/N$  and let $D_1 = q^{-1}(D)$. Now we prove that $D$ is totally minimal. Consider the restriction $q': D_1 \to D$, clearly, $q'$ is a continuous surjective  homomorphism. Since $D_1$ is totally minimal by our hypothesis on $G$, we obtain that $q'$ is open. Moreover, being a quotient group of $D_1$, we deduce that $D$ itself is totally minimal.
\end{proof}

Now we show that the HM groups $M_{p,n}$ and $T_n$ are not hereditarily totally minimal.
\begin{prop}\label{prop:mpntn}
Let $p$ be a prime  and $n\in \N$, then:
\ben
\item $M_{p,n}$ is not hereditarily totally minimal;
\item $T_n$ is not hereditarily totally minimal.
\een
\end{prop}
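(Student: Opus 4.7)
The plan is to invoke the Total Minimality Criterion (Fact \ref{fac:TMC}). Since both $M_{p,n}$ and $T_n$ are compact and hence totally minimal, a dense subgroup $H$ fails to be totally minimal precisely when it is not totally dense, i.e., when there is a closed normal subgroup $N$ of the ambient group with $H\cap N$ not dense in $N$. In both (1) and (2), the candidate will be a two-generator subgroup of the form $H=\langle (1,c_1),(1,c_2)\rangle$, and $N$ will be $\Z_p\rtimes\{1\}$ (respectively $\Z_2\rtimes\{1\}$); the key point will be that multiplicative independence of $c_1,c_2$ forces $H\cap N$ to coincide with $[H,H]$, which in turn is confined to the strictly smaller derived subgroup computed in Lemma \ref{lemma:speder} (respectively Lemma \ref{commTn}).

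For (1), I would choose $c_1,c_2\in C_p^{p^n}$ multiplicatively independent (for instance $c_1=1+p^{n+1}$ and $c_2=c_1^{\alpha}$ with $\alpha\in\Z_p\setminus\Q$ and $\alpha\not\equiv 1\pmod p$, so that the $\Z$-span of $\log c_1,\log c_2$ is dense in $p^{n+1}\Z_p$), and put $H=\langle(1,c_1),(1,c_2)\rangle\leq M_{p,n}$. A word in these generators has second coordinate $c_1^ac_2^b$, where $(a,b)$ is the total exponent; by multiplicative independence this equals $1$ only when $(a,b)=(0,0)$. Consequently the abelianization $H^{ab}$ is freely generated by the images of $(1,c_1)$ and $(1,c_2)$, and the kernel $[H,H]$ of the abelianization map coincides with $H\cap(\Z_p\rtimes\{1\})$. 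Since $[H,H]\subseteq M_{p,n}'=p^{n+1}\Z_p\rtimes\{1\}$ by Lemma \ref{lemma:speder}, it follows that $H\cap(\Z_p\rtimes\{1\})$ lies in a proper closed subgroup of $\Z_p\rtimes\{1\}$ and so is not dense in it. A side computation — $[(1,c_1),(1,c_2)]=(c_1-c_2,1)$ with $c_1-c_2$ of $p$-adic valuation exactly $n+1$, whence $\overline{[H,H]}=M_{p,n}'$, together with the density of $\langle c_1,c_2\rangle$ in $C_p^{p^n}$ — shows $\overline{H}=M_{p,n}$, so $H$ is genuinely a dense subgroup.

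For (2), the scheme is identical, but one must account for the sign twist in $\beta$ and arrange $H$ to escape every closed subgroup of index $2$ of the pro-$2$ group $T_n$. I would take $c_1\in C_2^{2^{n+1}}$ and $c_2\in C_2^{2^n}\setminus C_2^{2^{n+1}}$, again multiplicatively independent. After carefully tracking the minus signs (two of the four factors in the commutator lie in the sign-flipped region of $\beta$), the computation yields $[(1,c_1),(1,c_2)]=(c_1+c_2,1)$; since $c_1+c_2$ has $2$-adic valuation $1$, the closure of $[H,H]$ equals $T_n'=2\Z_2\rtimes\{1\}$ by Lemma \ref{commTn}, strictly smaller than $\Z_2\rtimes\{1\}$. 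The same abelianization argument gives $H\cap(\Z_2\rtimes\{1\})=[H,H]$. Finally, by Lemma \ref{commTn} the quotient $T_n/T_n'\cong\Z(2)\times C_2^{2^n}$ has exactly three subgroups of index $2$; inspection shows that none of the three pullbacks to $T_n$ contains both $(1,c_1)$ and $(1,c_2)$, and since $T_n$ is pro-$2$ every proper closed subgroup sits inside some index-$2$ subgroup, so $\overline{H}=T_n$.

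The main technical obstacle will be the commutator computation in $T_n$, where the definition of $\beta$ must be applied four times and one must verify at each step whether the intermediate second coordinate lies in $C_2^{2^{n+1}}$ or in its complement; one should also check that the chosen $c_1,c_2$ simultaneously satisfy multiplicative independence and the position constraint that forces density in all of $T_n$ rather than just in the closed subgroup $M_{2,n+1}$. Everything else reduces to the abelianization analysis outlined above together with the explicit form of the derived subgroups available from Lemma \ref{lemma:speder} and Lemma \ref{commTn}.
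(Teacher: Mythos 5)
Your route is genuinely different from the paper's and, in outline, it works. The paper constructs no explicit witness: it invokes Lemma \ref{lemma:strong} (quotients of hereditarily totally minimal groups are again such) and observes that $M_{p,n}/M_{p,n}'\cong\Z(p^{n+1})\times C_p^{p^n}$ (resp.\ $T_n/T_n'\cong\Z(2)\times C_2^{2^n}$, and alternatively $T_n\supseteq M_{2,n+1}$) is an infinite compact abelian group with torsion, hence not even hereditarily minimal by Fact \ref{TeoP}. You instead exhibit a concrete dense subgroup $H=\langle(1,c_1),(1,c_2)\rangle$ that is not totally dense and apply Fact \ref{fac:TMC}. Your key computations are correct: $[(1,c_1),(1,c_2)]=(c_1-c_2,1)$ in $M_{p,n}$, and $(c_1+c_2,1)$ in $T_n$ (exactly one of $c_1,c_2$ lies in the sign-flipped coset, so the action sends $c_2\mapsto -c_2$ in the computation), and the abelianization step $H\cap(\Z_p\rtimes\{1\})=[H,H]\subseteq M_{p,n}'$ (resp.\ $\subseteq T_n'$) is sound given multiplicative independence, since any word representing an element of the kernel of the second projection must have zero total exponents. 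The paper's proof buys brevity, reducing everything to the derived subgroups already computed in Lemmas \ref{lemma:speder} and \ref{commTn} plus Prodanov's theorem; yours buys an explicit subgroup witnessing the failure of total minimality and avoids Lemma \ref{lemma:strong}.

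Two points need repair. First, in (1) density of $H$ does not follow from the two facts you cite: the proper closed subgroup $p^{n+1}\Z_p\rtimes C_p^{p^n}$ also contains $M_{p,n}'$ and projects onto $C_p^{p^n}$. You should argue as you do for $T_n$, modulo the derived subgroup: letting $q:M_{p,n}\to M_{p,n}/M_{p,n}'\cong\Z(p^{n+1})\times C_p^{p^n}$ be the canonical map, $q(H)$ contains $(0,c_1c_2^{-1})$ with $c_1c_2^{-1}=c_1^{1-\alpha}$ a topological generator of $C_p^{p^n}$ because $1-\alpha$ is a unit (this is where $\alpha\not\equiv 1\pmod p$ enters), and together with $q(1,c_1)=(1,c_1)$ this generates a dense subgroup of the quotient; since $\overline H\supseteq\overline{\langle(c_1-c_2,1)\rangle}=M_{p,n}'=\ker q$, it follows that $\overline H=M_{p,n}$. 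Second, the sample choice $c_1=1+p^{n+1}$ only makes sense for $p>2$; for $p=2$ one has $C_2^{2^n}=1+2^{n+2}\Z_2$, so take $c_1=1+2^{n+2}$, with $v_2(c_1-c_2)=n+2$ and $M_{2,n}'=2^{n+2}\Z_2\rtimes\{1\}$ by Lemma \ref{lemma:speder}. With these adjustments (and noting that multiplicatively independent $c_1,c_2$ in the required positions exist, e.g.\ $c_2$ a topological generator and $c_1=c_2^{2\alpha}$ with $\alpha$ an irrational unit in case (2)), your proof goes through; in particular the index-$2$ argument for density in $T_n$ is correct, since every proper closed subgroup of a pro-$2$ group is contained in an open subgroup of index $2$, and such subgroups contain $T_n'$ and hence correspond to the three index-$2$ subgroups of $\Z(2)\times C_2^{2^n}$, none of which contains both $q(1,c_1)$ and $q(1,c_2)$.
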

\begin{proof}
(1): Assume for a contradiction that $M_{p,n}$ is hereditarily totally minimal for some prime $p$ and $n\in \N$. Then the quotient group $M_{p,n} / M_{p,n}'$ is hereditarily totally minimal by Lemma \ref{lemma:strong}.

If $p>2$, then $M_{p,n} / M_{p,n}'\cong \Z(p^{n+1})\times C_p^{p^n}$ by Lemma \ref{lemma:speder}, but this group is not even \HM\ by Fact \ref{TeoP}. In case $p=2$, as $M_{2,n} / M_{2,n}'\cong \Z(2^{n+2})\times C_2^{2^n}$, we get a similar contradiction.

(2): By Lemma \ref{commTn} the quotient group $T_n / T_n'$ of $T_n$ is isomorphic to $\Z(2) \times C_2^{2^n}$, which is not hereditarily totally minimal. Alternatively, one can note that $T_n$ contains $M_{2,n+1}$, which is not hereditarily totally minimal by (1).
\end{proof}

\begin{thm}\label{thm:htmkpf}
Let $G$ be an infinite locally compact solvable group, then the following conditions are equivalent:
\ben
	\item  $G$ is hereditarily totally minimal;
	\item $G$ is topologically isomorphic to $K_{p,F}  = \Z_p \rtimes F$, where $F\leq F_p$ for some prime $p$.\een
\end{thm}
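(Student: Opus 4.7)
The plan is to leverage the work already done and reduce this theorem to a simple combination of earlier results. Since hereditary total minimality is strictly stronger than hereditary minimality, Theorem D completely restricts the possible shape of $G$, and then a couple of quotient/non-HTM examples finish the job.

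For the implication $(2)\Rightarrow(1)$, I would simply cite Proposition \ref{prop:kpfhtm}, which already establishes that every group of the form $K_{p,F}=\Z_p\rtimes F$ with $F\leq F_p$ is hereditarily totally minimal. There is nothing extra to do here.

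For the implication $(1)\Rightarrow(2)$, the plan is as follows. Every hereditarily totally minimal group is in particular hereditarily minimal, so $G$ is an infinite locally compact solvable HM group. By Theorem D (the classification already proved via Theorem \ref{thm:free} and Theorem \ref{prop:nonfree}), $G$ must be topologically isomorphic to one of the groups in the list $K_{p,F}$, $M_{p,n}$, or $T_n$. It then suffices to rule out the last two families. But this is exactly the content of Proposition \ref{prop:mpntn}: neither $M_{p,n}$ nor $T_n$ is hereditarily totally minimal, since their abelianizations $M_{p,n}/M_{p,n}'$ and $T_n/T_n'$ (computed in Lemmas \ref{lemma:speder} and \ref{commTn}) are compact abelian groups containing direct product decompositions like $\Z(p^{n+1})\times C_p^{p^n}$ or $\Z(2)\times C_2^{2^n}$, which fail hereditary minimality by Prodanov's theorem (Fact \ref{TeoP}), hence fail HTM; and HTM passes to quotients by Lemma \ref{lemma:strong}. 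Therefore $G\cong K_{p,F}$.

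I do not anticipate any genuine obstacle: the heavy lifting was all done before (classification in Theorem D, closure of HTM under quotients in Lemma \ref{lemma:strong}, computation of derived subgroups in Lemmas \ref{lemma:speder} and \ref{commTn}, and the explicit verifications in Propositions \ref{prop:kpfhtm} and \ref{prop:mpntn}). The only mild care needed is to present the argument as a clean two-line reduction rather than redoing any of the earlier work, and to note that the hypothesis ``infinite'' together with solvability is precisely what makes Theorem D applicable.
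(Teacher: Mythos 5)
Your proposal is correct and follows essentially the same route as the paper: HTM implies HM, Theorem D reduces to the three families, Proposition \ref{prop:mpntn} eliminates $M_{p,n}$ and $T_n$, and Proposition \ref{prop:kpfhtm} gives the converse. The extra recap of how Proposition \ref{prop:mpntn} works (via Lemma \ref{lemma:strong} and the abelianizations) is accurate but not needed for this theorem.
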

\begin{proof}
$(1)\Rightarrow(2)$: Clearly, a hereditarily totally minimal group is \HM. So, by Theorem D, $G$ is topologically isomorphic to one of the three types of groups: $K_{p,F}$, $M_{p,n}$ or $T_n$. In addition, neither $M_{p,n}$ nor $T_n$ are  hereditarily totally minimal by Proposition \ref{prop:mpntn}.
Hence, $G$ is topologically isomorphic to $K_{p,F}$ for some prime $p$ and  $F\leq F_p$.\\
$(2)\Rightarrow(1)$:  Use Proposition \ref{prop:kpfhtm}.
\end{proof}

The next fact was originally proved in \cite{EDS}, and we use it in the subsequent Theorem \ref{thm:htmfi}.
\begin{fact}\cite[Theorem 7.3.1]{DPS}\label{fac:EDS}
	If a topological group $G$ contains a compact normal subgroup $N$ such that $G/N$ is (totally) minimal, then $G$ is (resp., totally) minimal.
\end{fact}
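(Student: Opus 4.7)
The plan is to handle the minimal case directly and then reduce the totally minimal case to it via quotients.

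For the minimal case, let $\sigma \subseteq \tau$ be any coarser Hausdorff group topology on $G$; the goal is $\sigma = \tau$. First I would restrict to $N$: since $(N,\tau\!\restriction_N)$ is compact and $(N,\sigma\!\restriction_N)$ is Hausdorff, the identity map $(N,\tau\!\restriction_N)\to (N,\sigma\!\restriction_N)$ is a continuous bijection from a compact space to a Hausdorff space, hence a homeomorphism. So $\sigma\!\restriction_N=\tau\!\restriction_N$ and $N$ is also $\sigma$-compact, in particular $\sigma$-closed.

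Next I would pass to the quotient $G/N$, equipped with the two quotient topologies $\sigma_q\subseteq\tau_q$. Both are Hausdorff (since $N$ is closed in both), and $\tau_q$ is minimal by hypothesis, so $\sigma_q=\tau_q$. The core step is then a ``three-space'' argument (Merson's lemma): combining $\sigma\!\restriction_N=\tau\!\restriction_N$ with $\sigma_q=\tau_q$ and the $\tau$-compactness of the normal subgroup $N$, we upgrade to $\sigma=\tau$. Concretely, given a $\tau$-neighborhood $U$ of $e$, pick a symmetric $\tau$-neighborhood $V$ of $e$ with $V^2\subseteq U$; then $VN$ is $\tau$-open and saturated under $N$, so $\pi(VN)$ is $\tau_q$-open, hence $\sigma_q$-open, making $VN$ itself $\sigma$-open. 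Using compactness of $N$ together with $\sigma\!\restriction_N=\tau\!\restriction_N$, one can cover $N$ by finitely many $\sigma$-translates of a small $\sigma$-neighborhood of $e$ whose product with itself sits inside $V\cap N$; this allows one to trim $VN$ to a $\sigma$-neighborhood of $e$ contained in $V^2\subseteq U$. I expect this gluing step to be the main obstacle, since it requires the usual care with choosing symmetric neighborhoods at several scales while exploiting compactness of the fiber $N$.

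For the totally minimal case, I would reduce to the minimal case by taking an arbitrary closed normal subgroup $M\trianglelefteq G$ and proving $G/M$ is minimal. Observe that $MN$ is closed normal in $G$ (product of a closed and a compact normal subgroup), and $MN/N$ is closed normal in $G/N$; since $G/N$ is totally minimal, the quotient $G/MN\cong (G/N)/(MN/N)$ is minimal. On the other hand, inside $G/M$ the subgroup $MN/M\cong N/(M\cap N)$ is a continuous image of the compact group $N$, hence compact, and normal, with $(G/M)/(MN/M)\cong G/MN$ minimal. Applying the minimal case of the fact itself (already proved) to the pair $(G/M,\,MN/M)$, we conclude $G/M$ is minimal, as required.
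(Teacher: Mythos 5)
The paper offers no proof of this statement: it is imported as Fact \ref{fac:EDS} from \cite[Theorem 7.3.1]{DPS} (originally due to \cite{EDS}), so your argument has to stand on its own. Its overall plan is the standard one and is sound. Compactness of $N$ gives $\sigma|_N=\tau|_N$ and the $\sigma$-closedness of $N$ for any coarser Hausdorff group topology $\sigma\subseteq\tau$; minimality of $G/N$ identifies the two (Hausdorff) quotient topologies; and a three-space gluing (Merson's Lemma) then yields $\sigma=\tau$. Your reduction of the totally minimal case is also correct: $MN$ is closed (compact times closed) and normal, $MN/M$ is a compact normal subgroup of $G/M$, and $(G/M)/(MN/M)\cong G/MN\cong (G/N)/(MN/N)$ is minimal because $G/N$ is totally minimal, so the minimal case applied to the pair $(G/M,\,MN/M)$ gives minimality of $G/M$.

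The weak point is your concrete description of the gluing. Trimming $VN$ by intersecting with a small $\sigma$-neighborhood does not work: if $x=vn\in VN\cap W$, all you learn is $n\in VW\cap N$, and this trace is a neighborhood of $e$ in $N$ with no upper bound on its size, so $x$ need not land in $U$; likewise the finite cover of $N$ by $\sigma$-translates has no visible role, and in fact compactness of $N$ is not needed in this step at all (it is used only to force $\sigma|_N=\tau|_N$ and to make $N$ $\sigma$-closed, so that the $\sigma$-quotient is Hausdorff). The correct trick saturates a smaller set: with $V=V^{-1}\in\tau$ and $V^2\subseteq U$, use $\sigma|_N=\tau|_N$ to choose a symmetric $\sigma$-neighborhood $W$ of $e$ with $W^2\cap N\subseteq V\cap N$; then $(V\cap W)N$ is $\tau$-open and $N$-saturated, hence $\sigma$-open because the two quotient topologies coincide, and every $x=an\in (V\cap W)N\cap W$ (with $a\in V\cap W$, $n\in N$) satisfies $n=a^{-1}x\in W^2\cap N\subseteq V$, whence $(V\cap W)N\cap W\subseteq V^2\subseteq U$. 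Since you invoke Merson's Lemma by name, this is an error in your sketch of that lemma rather than in the architecture of the proof, but as written the trimming step would not go through.
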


The groups $K_{p,F}$, where $F$ is a non-trivial subgroup of $F_p$, are center-free. The next theorem deals with the case of non-trivial center.

\begin{thm}\label{thm:htmfi}
Let $G$ be a locally compact non-discrete group with non-trivial center.	 The following conditions are equivalent:
	\ben  \item $G$ is hereditarily totally minimal;
	\item every non-trivial closed subgroup of $G$ is open, $Z(G)\cong \Z_p$ for some prime $p$ and $G/Z(G)$ is hereditarily totally minimal.\een
\end{thm}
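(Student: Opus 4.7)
The direction $(1)\Rightarrow(2)$ is quick. Hereditary total minimality implies hereditary minimality, so if $G$ is abelian then $G\cong\Z_p$ by Corollary \ref{cor:spr} and all three claims of (2) are immediate; otherwise $\{e\}\neq Z(G)\neq G$, so Theorem \ref{add:prop:thm}(1) yields that every non-trivial closed subgroup of $G$ is open, while Proposition \ref{prop:ndhmlc}(2) gives $Z(G)\cong\Z_p$. That $G/Z(G)$ is hereditarily totally minimal is then a direct application of Lemma \ref{lemma:strong}.

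For $(2)\Rightarrow(1)$, write $Z=Z(G)\cong\Z_p$ and let $\pi\colon G\to G/Z$ be the canonical quotient. Since $Z$ is a non-trivial closed normal subgroup, hypothesis (2) forces $Z$ to be open in $G$, so $G/Z$ is discrete. Fix an arbitrary non-trivial subgroup $H\leq G$; my plan is to verify the two conditions of the Total Minimality Criterion (Fact \ref{fac:TMC}) for the dense inclusion $H\leq\overline H$, namely (i) $\overline H$ is totally minimal, and (ii) $H$ is totally dense in $\overline H$. For (i), set $L=Z\cap\overline H$: as a closed subgroup of $Z\cong\Z_p$ it is compact, and it is normal in $\overline H$ because it lies in $Z(G)$. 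Since $G/Z$ is discrete, the continuous surjective homomorphism $\pi|_{\overline H}\colon\overline H\to\pi(\overline H)$ has image in a discrete group, so its kernel $L$ is open in $\overline H$ and the induced continuous bijection gives a topological isomorphism $\overline H/L\cong\pi(\overline H)$. The subgroup $\pi(\overline H)$ of the hereditarily totally minimal group $G/Z$ is totally minimal, and then Fact \ref{fac:EDS} applied to the compact normal subgroup $L$ of $\overline H$ yields (i).

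For (ii), let $N$ be a non-trivial closed normal subgroup of $\overline H$. Being closed in $\overline H$, which is itself closed in $G$, $N$ is a non-trivial closed subgroup of $G$, hence open in $G$ by hypothesis (2), and therefore open in $\overline H$. Density of $H$ in $\overline H$ then forces $H\cap N$ to be dense in $N$: any non-empty relatively open subset $U$ of $N$ is open in $\overline H$, so meets $H$, and this intersection lies inside $H\cap N$ since $U\subseteq N$. Combining (i) and (ii) via Fact \ref{fac:TMC} completes the proof that $H$ is totally minimal. The main point requiring care is the identification $\overline H/L\cong\pi(\overline H)$ as topological groups; without the openness of $Z$ this would take extra work, but discreteness of $G/Z$ makes it automatic.
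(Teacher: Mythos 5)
Your proposal is correct and follows essentially the same route as the paper: for $(1)\Rightarrow(2)$ the abelian case plus Theorem \ref{add:prop:thm}, Proposition \ref{prop:ndhmlc}(2) and Lemma \ref{lemma:strong}, and for $(2)\Rightarrow(1)$ Fact \ref{fac:EDS} applied with the compact normal subgroup $Z(G)\cap\overline H$, followed by the Total Minimality Criterion with total density coming from openness of non-trivial closed subgroups. The only cosmetic difference is that the paper first proves total minimality of every closed subgroup $H$ via the identification $H/(H\cap Z(G))\cong HZ(G)/Z(G)$ and then specializes to $\overline H$, whereas you argue directly on $\overline H$ through the quotient map onto the discrete group $G/Z(G)$.
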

\begin{proof}
By Fact \ref{DS-theorem}, the assertion holds true in the abelian case, so we may assume that $Z(G)\ne G$.
	
	$(1)\Rightarrow (2)$: If $G$ is hereditarily totally minimal, then $G/Z(G)$ is hereditarily totally minimal, by Lemma \ref{lemma:strong}.
Applying Theorem  \ref{add:prop:thm} we conclude that every closed non-trivial subgroup of $G$ is open  and $Z(G)\cong \Z_p$ for some prime $p$.

	$(2)\Rightarrow (1)$:  Note that $G/Z(G)$ is a discrete hereditarily totally minimal group.
	
	We first prove that if $H$ is a closed subgroup of $G$, then $H$ is totally minimal. As $Z(G)\cong \Z_p$, the subgroup $H\cap Z(G)$ is compact and normal in $H$. We also have $H/(H\cap Z(G)) \cong HZ(G)/Z(G)\leq G/Z(G)$, so $H/(H\cap Z(G))$ is totally minimal since $G/Z(G)$ is hereditarily totally minimal. By Fact  \ref{fac:EDS}, $H$ is totally minimal.
	
	Now let $H$ be a  subgroup of $G$.  By Fact \ref{fac:TMC}, it remains to show that $H$ is totally dense in $\overline{H}$. To this aim, take a non-trivial closed normal subgroup $N$ of $\overline{H}$. Clearly, $N$ is also closed in $G$ and by our assumption it is open in $G$. In particular, $N$ is open in $\overline{H}$ so
	$\overline{H\cap N}=N$.
	\end{proof}

Now we recall of special case of a theorem, known as {\em Countable Layer Theorem}.

\begin{thm}\label{coro1}
\cite[Theorem 9.91]{HM}
Any profinite group $G$ has a canonical countable descending sequence
\begin{equation}\label{eq:CLT}
G = \Omega_0(G)\supseteq \ldots \supseteq  \Omega_n(G) \supseteq  \ldots \ldots
\end{equation}
 of closed characteristic subgroups of G with the following two properties:
\ben
\item $\bigcap_{n=1}^{\infty} \Omega_n(G) = \{e\}$;

\item  for each $n \in \N_+$, the quotient $\Omega_{n-1}(G)/\Omega_n(G)$ is  isomorphic to a cartesian product of simple finite groups. \een
\end{thm}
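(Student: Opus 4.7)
The plan is to construct $\Omega_n(G)$ recursively as a descending ``socle series''. For any profinite group $H$, define
\[
\Omega(H) = \bigcap \{N \leq H : N \text{ is open normal in } H \text{ and } H/N \text{ is a finite simple group}\},
\]
set $\Omega_0(G) = G$, and put $\Omega_{n}(G) = \Omega(\Omega_{n-1}(G))$ for $n \geq 1$. Each $\Omega_n(G)$ is closed as an intersection of open subgroups, and is characteristic in $\Omega_{n-1}(G)$ because $\Omega$ is defined by a rule invariant under topological automorphisms; a straightforward induction then shows that $\Omega_n(G)$ is characteristic in $G$ itself.

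The heart of the matter is property $(2)$: that $H/\Omega(H)$ is a cartesian product of finite simple groups for every profinite $H$. First I would partition the family $\mathcal{M}$ of open normal subgroups $N$ of $H$ with $H/N$ simple according to the isomorphism class $[S]$ of the quotient, and set $K_S = \bigcap\{N \in \mathcal{M} : H/N \cong S\}$. Using Zorn's lemma, I would extract a maximal subfamily $\{N_i\}_{i \in I_S}$ with $H/N_i \cong S$ such that the diagonal map $H/K_S \to \prod_{i \in I_S} H/N_i \cong S^{I_S}$ is surjective; maximality guarantees that every other $N \in \mathcal{M}$ with $H/N \cong S$ already contains $K_S$, and so is redundant. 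Next, using that non-isomorphic finite simple groups admit no non-trivial homomorphisms between one another, a Goursat-type argument would show that the subdirect embedding $H/\Omega(H) \hookrightarrow \prod_{[S]} H/K_S \cong \prod_{[S]} S^{I_S}$ is in fact onto, yielding the desired cartesian product structure.

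For property $(1)$, I would fix $g \in G$ with $g \neq e$, pick an open normal subgroup $U$ of $G$ with $g \notin U$ (using profiniteness), and take a chief series $G/U = \bar G_0 \supset \bar G_1 \supset \cdots \supset \bar G_k = \{e\}$ of the finite group $G/U$, with preimages $G_i = \pi^{-1}(\bar G_i)$ under the canonical projection $\pi : G \to G/U$. Since each chief factor $G_{i-1}/G_i$ is isomorphic to $S^{t_i}$ for some finite simple group $S$, I would prove by induction on $i$ that $\Omega_i(G) \subseteq G_i$: assuming $\Omega_{i-1}(G) \subseteq G_{i-1}$, the quotient $\Omega_{i-1}(G)/(G_i \cap \Omega_{i-1}(G))$ is a subgroup of $G_{i-1}/G_i \cong S^{t_i}$, whose intersection of open normal subgroups with simple quotient is trivial, so $\Omega_i(G) \subseteq G_i$; the case $i=k$ gives $g \notin \Omega_k(G)$.

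The hardest step will be the Goursat/Zorn argument that identifies $H/\Omega(H)$ as a \emph{full} cartesian product rather than merely a subdirect one: one must simultaneously control ``diagonal'' identifications between factors corresponding to isomorphic simple quotients, and between factors of non-isomorphic type, which requires careful use of simplicity of the quotients together with the correct choice of maximal indexing family.
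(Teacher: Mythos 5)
First, a point of comparison: the paper does not prove this statement at all — it is quoted as a special case of the Countable Layer Theorem from Hofmann--Morris \cite[Theorem 9.91]{HM} — so your argument can only be measured against the standard treatment, and your construction (iterating $\Omega(H)=\bigcap\{N \text{ open normal in } H:\ H/N \text{ finite simple}\}$, i.e.\ the iterated Mel'nikov subgroup) is indeed the standard route. The skeleton is right, but two steps are wrong or unsupported as written. The lemma you lean on for property (2), that ``non-isomorphic finite simple groups admit no non-trivial homomorphisms between one another'', is false: $\Z(2)$ maps non-trivially into $A_5$, and $A_5$ embeds into $A_6$. What is true (and trivial) is that non-isomorphic finite simple groups have no common non-trivial \emph{quotient}, since a non-trivial homomorphism out of a simple group is injective. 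More importantly, in your Goursat step the two blocks are not single simple groups but $T^{I_T}$ and a product $\prod_{[S]\neq[T]} S^{I_S}$, so what you actually need is: every finite simple continuous quotient of a cartesian product of finite simple groups is isomorphic to one of the factors (equivalently, a non-trivial common quotient of the two blocks would have a finite simple quotient that is simultaneously $\cong T$ and $\cong$ some $S\not\cong T$). This requires a genuine, if short, argument — e.g.\ an open normal subgroup of $\prod_i S_i$ contains the subproduct over the complement of a finite set $E$, so any finite simple quotient is a quotient of $\prod_{i\in E}S_i$, and Jordan--H\"older finishes — which is missing from your sketch; without it the surjectivity of $H/\Omega(H)\to\prod_{[S]}S^{I_S}$ is not established. (The within-class Zorn step is fine, but note that ``maximality makes the remaining $N$ redundant'' itself needs the Goursat graph argument: the image in $S^{I_S}\times S$ is the graph of a continuous epimorphism, so $N\supseteq\bigcap_{i\in I_S}N_i$.)

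Second, in your induction for property (1) you assert that $\Omega_{i-1}(G)/(G_i\cap\Omega_{i-1}(G))$, being ``a subgroup of $G_{i-1}/G_i\cong S^{t_i}$'', has trivial intersection of its normal subgroups with simple quotient. For an arbitrary subgroup of a power of a simple group this is false: $A_4\leq A_5$ has $V_4$ as its unique maximal normal subgroup, so the intersection in question is $V_4\neq\{e\}$, and with that reading the induction step collapses. The step is salvageable, but only by using a hypothesis you state elsewhere and never invoke here: $\Omega_{i-1}(G)$ is characteristic, hence normal, in $G$, so its image in the chief factor is a \emph{normal} subgroup of $S^{t_i}$; normal subgroups of $S^{t_i}$ are themselves (isomorphic to) powers of $S$, and for a power of a simple group the kernels of the coordinate projections intersect trivially, which gives $\Omega_i(G)\subseteq G_i$ as desired. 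So both property (1) and property (2) are provable along your lines, but each currently rests on an incorrect or unjustified auxiliary claim that needs to be replaced by the correct structural fact about (products of) finite simple groups.
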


\begin{thm}\label{neww}
 Every locally compact hereditarily totally minimal group is metrizable.
\end{thm}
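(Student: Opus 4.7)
The plan is to reduce to the case of a profinite HTM group, and then exploit the Countable Layer Theorem (Theorem \ref{coro1}) together with Proposition \ref{prop:ndhmlc}(2) to bound the size of each layer.

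First, if $G$ is discrete, the conclusion is trivial, so assume $G$ is non-discrete. Since HTM implies HM, Proposition \ref{prop:ndhmlc}(1) gives that $G$ is totally disconnected, so by van Dantzig's theorem $G$ contains a compact open subgroup $H$, which is profinite. The subgroup $H$ inherits HTM trivially from $G$. Because $H$ is open in $G$, a countable neighborhood base of $e$ in $H$ is also one in $G$, so once $H$ is shown to be metrizable, $G$ becomes first-countable and hence metrizable by the Birkhoff--Kakutani theorem. So it suffices to prove that every profinite HTM group is metrizable.

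Applying the Countable Layer Theorem to such an $H$ yields a descending chain $H = \Omega_0 \supseteq \Omega_1 \supseteq \cdots$ of closed characteristic subgroups with $\bigcap_n \Omega_n = \{e\}$ such that each layer $L_n := \Omega_{n-1}(H)/\Omega_n(H)$ is topologically isomorphic to a cartesian product $\prod_{i \in I_n} F_{n,i}$ of finite simple groups. Since $L_n$ is a quotient of the subgroup $\Omega_{n-1}(H)$ of the HTM group $H$, Lemma \ref{lemma:strong} (applied inside $\Omega_{n-1}(H)$) shows $L_n$ is HTM, hence HM.

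The crucial step is to show each index set $I_n$ is countable. There are only countably many isomorphism classes of finite simple groups, so if $I_n$ were uncountable, by pigeonhole some finite simple group $F$ would appear as $F_{n,i}$ for infinitely many $i$ in some $I' \subseteq I_n$. The coordinates supported on $I'$ give a closed subgroup of $L_n$ topologically isomorphic to $F^{I'}$, which as a subgroup of the HM group $L_n$ must itself be HM. However, $F^{I'}$ is an infinite compact (hence non-discrete locally compact) group of exponent dividing $|F|$, so it is torsion; this contradicts Proposition \ref{prop:ndhmlc}(2), which requires any non-discrete locally compact HM group to contain a copy of the torsionfree group $\Z_p$. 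Hence each $I_n$ is countable, so each $L_n$ is a countable product of finite groups and therefore metrizable.

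A finite induction using the short exact sequences $\Omega_{n-1}/\Omega_n \hookrightarrow H/\Omega_n \twoheadrightarrow H/\Omega_{n-1}$ then yields that each $H/\Omega_n$ is metrizable. Since $\bigcap_n \Omega_n = \{e\}$, the canonical map $H \to \prod_n H/\Omega_n$ is a continuous injective homomorphism into a countable product of metrizable compact groups, which is metrizable; by compactness of $H$, this is a topological embedding, and $H$ is metrizable, completing the proof. The main obstacle is the crucial step bounding $|I_n|$: the heart of the argument is the tension between the bounded-exponent (hence torsion) structure of the infinite power $F^{I'}$ and the torsionfree copy of $\Z_p$ mandated inside every non-discrete locally compact HM group by Proposition \ref{prop:ndhmlc}(2).
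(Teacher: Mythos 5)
Your proof is correct, and its skeleton coincides with the paper's: reduce to a compact open (hence profinite) subgroup $H$ via Proposition \ref{prop:ndhmlc}(1) and van Dantzig, apply the Countable Layer Theorem, and use Lemma \ref{lemma:strong} (on the HTM closed subgroups $\Omega_{n-1}(H)$) to make each layer hereditarily minimal. Where you diverge is the finishing step. The paper shows each layer is actually \emph{finite} (an infinite product of non-trivial finite groups contains an infinite compact abelian subgroup with torsion, which cannot be hereditarily minimal by Fact \ref{TeoP}); consequently every $\Omega_n(K)$ is open, and compactness plus $\bigcap_n\Omega_n(K)=\{e\}$ makes the $\Omega_n(K)$ a countable local base at $e$, giving metrizability at once. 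You instead prove only that each index set $I_n$ is countable, via the pigeonhole on isomorphism classes of finite simple groups and the clash between the bounded-exponent subgroup $F^{I'}$ and the copy of $\Z_p$ forced by Proposition \ref{prop:ndhmlc}(2); this is valid, but it is weaker information, and you then have to pay for it with an extra assembly argument: the induction on the exact sequences $\Omega_{n-1}/\Omega_n \hookrightarrow H/\Omega_n \twoheadrightarrow H/\Omega_{n-1}$ uses the (standard, but here uncited) three-space property of metrizability for topological groups, followed by the embedding of $H$ into $\prod_n H/\Omega_n$. Note that your own argument upgrades easily to the paper's stronger conclusion: if $I_n$ were merely infinite, choosing a non-trivial cyclic subgroup of prime order in each factor over a countably infinite subset yields an infinite compact abelian subgroup with torsion inside the hereditarily minimal layer, contradicting Fact \ref{TeoP}; with finiteness of the layers in hand, the openness-of-$\Omega_n$ route shortcuts your last two steps. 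So: same decomposition and same key lemmas, but your count is coarser and your conclusion correspondingly needs more general-topology input, while the paper's finiteness argument keeps everything inside the compactness picture.
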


\begin{proof}  Let $G$ be a locally compact HTM group. By Proposition \ref{prop:ndhmlc}(1), $G$ is totally disconnected. Let $K$ be a
compact open subgroup of $G$. It is enough to show that $K$ is metrizable.
According to the Countable Layer Theorem, one has a decreasing chain of closed normal subgroups (\ref{eq:CLT})
with $\bigcap_{n=1}^{\infty} \Omega_n(K) = \{e\}$.
By Lemma \ref{lemma:strong}, the quotient group $G/ \Omega_1(K)$ is HM. Since this is a direct product
of finite groups, this is possible only if $G/ \Omega_1(K)$ is finite. Similarly, each quotient group $\Omega_n(K)/ \Omega_{n+1}(K)$ is  finite.
Hence, also  each quotient group $G/ \Omega_{n}(K)$ is  finite. This implies that each subgroup $\Omega_n(K)$ is open. Now $\bigcap_{n=1}^{\infty} \Omega_n(K) = \{e\}$ and the compactness of $K$ imply that the subgroups $\{\Omega_n(K): n\in \N\}$ form a local base at $e$. Therefore, $K$ is metrizable.
\end{proof}

\section{Open questions and concluding remarks}\label{Open questions and concluding remarks}
Recall that a group  is  hereditarily non-topologizable when it is hereditarily  totally minimal in the discrete topology.
\begin{question}\label{CC:groups}
	Are discrete \HM \ groups also hereditarily non-topologizable ?
\end{question}
In case of affirmative answer, one can deduce from the results quoted in \S \ref{discreteHM} that the class of countable discrete \HM\ groups
is stable under taking finite products. On the other hand, if $G$ and $H$ are discrete \HM\ groups such that their product is not \HM, then
one of these groups is not hereditarily non-topologizable, so provides a counter-example to Question \ref{CC:groups}.

\smallskip

In view of Theorem \ref{add:prop:thm}, the following natural question arises:
\begin{quest}\label{que:exist}
Does there exist a non-discrete hereditarily minimal locally compact group $G$ with $\{e\} \neq Z(G) \neq G$?
\end{quest}

By Theorem \ref{add:prop:thm}, a positive answer to Question \ref{que:exist} provides a group $G$ such that the center is open, and has infinite index in $G$, so $G$ is not compact (this follows also from Theorem C). So by Corollary \ref{cor:spr}, Question \ref{que:exist} has this equivalent formulation: does there exist a \HM\ locally compact group $G$ which is neither discrete nor compact,  and has non-trivial center?
In other words:

\begin{question}\label{QQ:DC}
\begin{enumerate}
\item
Does there exist a \HM\ locally compact group which is neither discrete nor compact?
\item Can such a group have non-trivial center?
\end{enumerate}
\end{question}

As already noted, a positive answer to Question \ref{que:exist} gives a positive answer to both items in Question \ref{QQ:DC}. On the other hand, a group $G$ providing a positive answer to Question \ref{QQ:DC}, if not center-free, gives also a positive answer to Question \ref{que:exist}.

\medskip

One can focus his search for an answer to Questions \ref{que:exist} and \ref{QQ:DC} using Proposition  \ref{prop:cand} below.

Theorem \ref{add:prop:thm} provides some necessary conditions for a non-discrete locally compact group $G$ satisfying $\{e\} \neq Z(G) \neq G$ to be also hereditarily minimal. Among others: being torsionfree, having the center open and isomorphic to $\Z_p$, having the quotient $G/Z(G)$ a (discrete) $p$-group. This justifies the hypotheses in the following proposition, which proves a partial converse to Theorem \ref{add:prop:thm}.

\begin{prop}\label{prop:cand}
	Let $p$ be a prime number, and $G$ be a torsionfree group such that $Z(G)\cong \Z_p$ is open in $G$. If $G/Z(G)$ is \HM, then $G$ is \HM.
\end{prop}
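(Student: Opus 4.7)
The plan is to prove, for every subgroup $H \leq G$, that $H$ is minimal by applying the Minimality Criterion (Fact \ref{Crit}(1)) to the dense inclusion $H \hookrightarrow \overline{H}$. The argument splits into first establishing minimality of $\overline{H}$, and then essentiality of $H$ in $\overline{H}$.

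Since $Z(G)$ is open in $G$ (and closed, being the center of a Hausdorff group), the subgroup $B := \overline{H} \cap Z(G)$ is a compact (closed subgroup of $Z(G) \cong \Z_p$) central open subgroup of $\overline{H}$. The canonical map $\overline{H} \to G/Z(G)$ has kernel $B$, so it factors through an injective continuous homomorphism $\overline{H}/B \hookrightarrow G/Z(G)$; as both sides are discrete (the former because $B$ is open, the latter because $Z(G)$ is open), this is a topological embedding. By hypothesis $G/Z(G)$ is \HM, so its subgroup $\overline{H}/B$ is \HM\ as well, in particular minimal. Fact \ref{fac:EDS} applied to the compact normal subgroup $B \trianglelefteq \overline{H}$ then yields minimality of $\overline{H}$.

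For essentiality of $H$ in $\overline{H}$, let $N$ be a non-trivial closed normal subgroup of $\overline{H}$, and consider two cases depending on $N \cap B$. If $N \cap B \neq \{e\}$, then $N \cap B$ is a non-trivial closed subgroup of $B \cong \Z_p$, hence of finite index in $B$, hence open in $B$. As $B$ is open in $\overline{H}$, so is $N \cap B$, and therefore $N$ is open in $\overline{H}$; density of $H$ then forces $H \cap N$ to be dense in $N$, and since $\overline{H}$ is Hausdorff and $N$ contains a non-identity element, $H \cap N$ is non-trivial. The delicate case is $N \cap B = \{e\}$: here $N$ is discrete in $\overline{H}$ (as $B$ is open) and the quotient map $\pi : \overline{H} \to \overline{H}/B$ restricts to a continuous group isomorphism $N \cong \pi(N) \leq \overline{H}/B \leq G/Z(G)$.

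The main obstacle is exactly this second case, and the key insight is that torsionfreeness of $G$ combined with Lemma \ref{lem:tnlf} forces it to be vacuous. Indeed, $G/Z(G)$ is a discrete \HM\ group, so by Lemma \ref{lem:tnlf} it is torsion whenever it is infinite (and it is trivially torsion when finite); consequently $\pi(N)$ is torsion, and so is $N \cong \pi(N)$. But $N \leq G$ and $G$ is torsionfree, forcing $N = \{e\}$, contradicting the assumption that $N$ is non-trivial. Hence this case cannot occur, and $H \cap N \neq \{e\}$ in all cases, proving that $H$ is essential in $\overline{H}$. The Minimality Criterion then gives minimality of $H$, and since $H \leq G$ was arbitrary, $G$ is \HM.
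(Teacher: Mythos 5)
Your proof is correct and follows essentially the same route as the paper's: minimality of $\overline{H}$ via Fact \ref{fac:EDS} applied to the compact central subgroup $\overline{H}\cap Z(G)$ together with the embedding of the quotient into the discrete HM group $G/Z(G)$, and essentiality of $H$ in $\overline{H}$ from torsionfreeness of $G$ combined with the fact that $G/Z(G)$ is torsion (Lemma \ref{lem:tnlf}), all assembled by the Minimality Criterion. The only cosmetic difference is in the essentiality step: the paper observes that both $H\cap Z(G)$ and $N\cap Z(G)$ are non-trivial and applies Lemma \ref{lem:cyc} inside $Z(G)\cong\Z_p$, whereas you show that $N$ must meet $\overline{H}\cap Z(G)$, hence be open in $\overline{H}$, and conclude by density of $H$; both variants rest on the same torsion argument.
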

\begin{proof}
 Note that $G/Z(G)$ is a discrete \HM\ group by our assumption.
 By Fact  \ref{fac:EDS} and using similar arguments as in the proof of Theorem \ref{thm:htmfi} (see the implication $(2)\Rightarrow (1)$), one can prove that  every closed subgroup of $G$ is minimal.

	Now let $H$ be a subgroup of $G$, and we show that $H$ is essential in $\overline{H}.$ Since $G$ is torsionfree and $G/Z(G)$ is torsion by Lemma	\ref{lem:tnlf}, every non-trivial subgroup of $G$  meets the center non-trivially.  Let  $N$ be a closed normal non-trivial subgroup of $\overline H$. So, $H\cap Z(G)$ and $N\cap Z(G)$ are  non-trivial subgroups of $Z(G)$ and $N\cap Z(G)$ is also closed. As $Z(G)\cong \Z_p$, Lemma  \ref{lem:cyc} implies that $N\cap H\cap Z(G)$ is non-trivial. In particular, $N\cap H$ is non-trivial.
\end{proof}

If $G$ is a torsionfree group such that $Z(G)\cong \Z_p$ and $G/Z(G)$ is an infinite discrete \HM\ $p$-group, then $G$ is \HM\ by Proposition \ref{prop:cand}, and clearly $G$ is neither discrete nor compact.  So such a group provides a positive answer to Question \ref{que:exist} (so in particular also to Question \ref{QQ:DC}).
 We conjecture that such a group exists on the base of a remarkable example of a locally compact group $M$ built in \cite[Theorem 4.5]{HMOWO}, having
the following properties:
\ben
\item $Z(M)$ is open and $Z(M) \cong \Z_p$;

\item $M/Z(M)$ is a Tarski monster of exponent $p$ having $p$ conjugacy classes;

 \item every element of $M$ is contained in a subgroup, which contains $Z(M)$ and is isomorphic to $\Z_p$ (in particular, $M$ is torsionfree);

\item all normal subgroups of $M$ are central;

\item for every proper closed subgroups $H$ of $M$, $H \cong \Z_p$ and either $H \leq Z(M)$ or $Z(M) \leq H$.\een

 In particular, we see that $M$ has all of the properties listed in Theorem \ref{add:prop:thm}, but it is not clear if such an $M$ must be
HM. This will be ensured by Proposition \ref{prop:cand} if one can ensure the Tarski monster $M/Z(M)$ to be HM (i.e., hereditarily
non-topologizable). Tarski monsters  $T$ with this property were built in \cite{KOO}, so it remains to check if for such a $T$ one can
build an extension $M$ as above.

\smallskip
By Proposition \ref{prop:ms}, if $G$ is an infinite locally compact HM group, which is either compact or locally solvable, with a non-trivial normal solvable subgroup, then $G$ is metabelian. So in particular $G$ is solvable, and Theorem D applies.

\begin{question}
Can Theorem D be extended to locally solvable groups?
\end{question}

Fact  \ref{DS-theorem} shows that the abelian HM groups are second countable,  while Theorem D shows that the conclusion remains true if ``abelian'' is replaced by ``solvable and locally compact''.
On the other hand, uncountable discrete HTM groups were built in \cite{KOO}, showing that a locally compact HTM non-solvable group need not be second countable even in the discrete case. Yet this leaves open the following:

\begin{question}
Are  locally compact $\hm$ groups metrizable?
\end{question}

 Theorem \ref{neww} shows that the answer is affirmative for locally compact HTM groups. On the other hand,  Fact \ref{fac:ext} suggests that the answer can be affirmative even for locally compact HLM  groups.

 Proposition \ref{prop:mpntn} provides examples of locally compact \HM \ groups with trivial center that are not HTM.
We are not aware if a locally compact \HM \ group with non-trivial center can be non-HTM.

\subsection*{Acknowledgments}
The first-named author takes this opportunity to thank Professor Dikranjan for his generous hospitality and support.
The second-named author  is partially supported by  grant PSD-2015-2017-DIMA-PRID-2017-DIKRANJAN PSD-2015-2017-DIMA - progetto PRID TokaDyMA
 of Udine University. The third and fourth-named authors are supported by Programma SIR 2014 by MIUR, project GADYGR, number RBSI14V2LI, cup G22I15000160008 and by INdAM - Istituto Nazionale di Alta Matematica.

\end{document}